\theoremstyle{plain}
\newtheorem{theorem}{Theorem}[section]
\newtheorem{lemma}[theorem]{Lemma}
\newtheorem{proposition}[theorem]{Proposition}
\newtheorem{corollary}[theorem]{Corollary}
\newtheorem{conjecture}[theorem]{Conjecture}
\theoremstyle{definition}
\newtheorem{definition}[theorem]{Definition}
\newtheorem{example}[theorem]{Example}
\newtheorem{remark}[theorem]{Remark}
\newcommand{\Bbbk}{\mathbbm{k}}
\DeclareMathOperator{\im}{im}
\newcommand{\R}{\mathbb{R}}
\newcommand{\cI}{\mathcal{I}}
\newcommand{\cB}{\mathcal{B}}
\newcommand{\bp}{\mathbf{p}}
\newcommand{\bq}{\mathbf{q}}
\newcommand{\bb}{\mathbf{b}}
\newcommand{\bI}{\mathbf{I}}
\newcommand{\si}{\mathsf{i}}
\newcommand{\sj}{\mathsf{j}}
\newcommand{\sk}{\mathsf{k}}
\newcommand{\sfa}{\mathsf{a}}
\newcommand{\sfb}{\mathsf{b}}
\newcommand{\sfc}{\mathsf{c}}
\newcommand{\sfd}{\mathsf{d}}
\newcommand{\co}{\colon}
\newcommand{\ra}{\rightarrow}
\newcommand{\lang}{\langle}
\newcommand{\rang}{\rangle}
\newcommand{\bid}{\mathbf{1}}
\newcommand{\lcm}[1]{\mathrm{lcm}({#1})}
\begin{document}
\title[Splines on graphs and polynomial splines on cycles]{Generalized splines on graphs with two labels and polynomial splines on cycles}

\author{Portia Anderson}
\address{Department of Mathematics, Cornell University, Ithaca, NY.}
\email{pxa2@cornell.edu}
\author{Jacob P. Matherne}
\address{Mathematical Institute, University of Bonn, Bonn, Germany and Max Planck Institute for Mathematics, Bonn, Germany.}
\email{jacobm@math.uni-bonn.de}
\author{Julianna Tymoczko}
\address{Department of Mathematics and Statistics, Smith College, Northampton, MA.}
\email{jtymoczko@smith.edu}

\begin{abstract}

{\em Generalized splines} are an algebraic combinatorial framework that generalizes and unifies various established concepts across different fields, most notably the classical notion of splines and the topological notion of GKM theory.  The former consists of piecewise polynomials on a combinatorial geometric object like a polytope, whose polynomial pieces agree to a specified degree of differentiability. The latter is a graph-theoretic construction of torus-equivariant cohomology that Shareshian and Wachs used to reformulate the well-known Stanley--Stembridge conjecture, a reformulation that was recently proven to hold by Brosnan and Chow and independently Guay-Paquet. 

This paper focuses on the theory of generalized splines.  A generalized spline on a graph $G$ with each edge labeled by an ideal in a ring $R$ consists of a vertex-labeling by elements of $R$ so that the labels on adjacent vertices $u, v$ differ by an element of the ideal associated to the edge $uv$.  We study the $R$-module of generalized splines and produce minimum generating sets for several families of graphs and edge-labelings: $1)$ for all graphs when the set of possible edge-labelings consists of at most two finitely-generated ideals, and $2)$ for cycles when the set of possible edge-labelings consists of principal ideals generated by elements of the form $(ax+by)^2$ in the polynomial ring $\mathbb{C}[x,y]$.  We obtain the generators using a constructive algorithm that is suitable for computer implementation and give several applications, including contextualizing several results in the theory of classical (analytic) splines.
\end{abstract}

\thanks{Portia Anderson received support from the Smith College Center for Women in Mathematics and from NSF-MTCP-1143716. Jacob Matherne received support from NSF-DMS-1638352, the Association of Members of the Institute for Advanced Study, the Hausdorff Research Institute for Mathematics in Bonn, and the Deutsche Forschungsgemeinschaft (DFG) under Germany's Excellence Strategy - GZ 2047/1, Projekt-ID 390685813.  Julianna Tymoczko received support from NSF-DMS-1362855 and NSF-DMS-1800773.}

\subjclass[2020]{05C25 (Primary); 05C78, 05E16, 41A15 (Secondary)}

\keywords{generalized splines, minimum generating sets}

\maketitle

\section{Introduction}
\label{sec:intro}

Splines are a fundamental tool in applied mathematics and analysis, used in fields from data interpolation \cite{dB01} to computer graphics and design \cite{BBB87}.  Classically, they are defined as piecewise polynomials on a combinatorial partition of a geometric object that agree up to some specified differentiability on the intersection of the top-dimensional pieces of the partition.  The most common example of these combinatorial partitions in the literature is a polyhedral or simplicial decomposition of a suitable region in Euclidean space.

Splines also appear in algebraic topology under the name {\em GKM theory}.  GKM theory is a graph-theoretic construction of torus-equivariant cohomology developed by Goresky, Kottwitz, and MacPherson \cite{GKM98}.  It has had particular impact in algebraic combinatorics, especially Schubert calculus \cite{KT03, T16}.  More recently, Shareshian and Wachs conjectured that GKM theory could be used to reframe what's called the \emph{Stanley--Stembridge conjecture} in combinatorial representation theory as a conjecture about a symmetric group action on the torus-equivariant cohomology of a particular family of varieties \cite{SW16}.  Their geometric interpretation of the  Stanley--Stembridge conjecture---though not the conjecture itself---was recently proven to hold by Brosnan and Chow and, independently, Guay-Paquet \cite{BC18, GP13}.  This has led to an explosion of work relying on properties of splines in the GKM context \cite{AHM19, HPT21, HP18}.

This paper considers a simultaneous algebraic generalization of both classical splines and the splines occurring in GKM theory: given a (combinatorial) graph $G$ with each edge labeled by an ideal in some fixed ring $R$, a generalized spline is an $R$-labeling of the vertices so that the labels on adjacent vertices $u, v$ differ by an element of the ideal labeling the edge $uv$.  This formulation is due to work of the third author with Gilbert and Viel \cite{GTV16}, but was first used by Billera \cite{Bil88} and (in the context of equivariant cohomology) by Guillemin--Zara  \cite{GuiZar00, GuiZar01d, GuiZar01}.  The construction of generalized splines is essentially dual to the classical definition of splines \cite[Theorem 2.4]{Bil88} (equivalently, Proposition~\ref{proposition: billera result}). For example, in the case of a triangulation of a region in the plane, the vertices of $G$ correspond to triangles of the triangulation, and the edge-relations correspond to differentiability conditions across intersections of triangles.  Generalized splines coincide with the typical construction of GKM theory if the graph, the ring, and the ideals all satisfy very particular conditions \cite{GKM98}.  (See Section~\ref{sec:applications} for more.)

One of the most important problems in the study of splines is to identify the size of the spline space, interpreted either as the dimension of the vector space of classical splines of degree at most $d$ \cite{AlfSch87, AlfSch90, Hon91, SSY, SchSti02, Sch79, StiYua19, Str74} (see \cite{LaiSch07} for a survey in the bivariate case) or as the (minimum) number of generators of the module of generalized splines \cite{AltSar19,AltSar19b, BilRos91, DiP12, GuiZar01, GuiZar03, BHKR15, GTV16, ACFGMT20}. 

In this paper, we compute the minimal number of generators of the module of generalized splines over several families of graphs for different collections of rings.  Our most general result is Theorem~\ref{thm:2labalg}, which gives an algorithm using graph connectivity to compute a minimum set of generators for the module of generalized splines over any graph $G$ whose set of edge-labels consists of at most two distinct ideals.  The only hypothesis on $R$ is that it be a unique factorization domain (UFD).  Theorem~\ref{thm:onelabel} specializes to the case when all edges of $G$ are labeled by the same ideal; in that case, if the ideal is principal then the module of generalized splines is free over $R$ and its rank is precisely the number of vertices in $G$.

We then specialize $R$ to be a polynomial ring, typically using the assumption that the edge-labels are principal ideals generated by homogeneous polynomials of the same degree. These assumptions may seem restrictive but are not.  Indeed, in both classical splines and GKM theory, splines use polynomial rings as their base ring; furthermore, all known applications use principal ideals as edge-labels.  (See Remark~\ref{rem:principalisok} and Sections~\ref{section: GKM summary} and~\ref{section: classical spline summary} for more extensive discussion.) Moreover, the edge-labels used in GKM theory arise as the weights of torus actions on a geometric space, and are naturally homogeneous. Even in cases when the edge-labels are not a priori homogeneous (as in classical splines), we can homogenize. (See Section~\ref{sec:homog} for more details about homogenization.)  Corollary~\ref{corollary: homogeneity means quotient works} proves that homogenization induces a natural vector space isomorphism between the classical vector space of splines of degree at most $d$ and the module of generalized splines over the polynomial quotient 
\[
\mathbb{R}[x_1, x_2, \ldots, x_n]/\lang \textup{all monomials in the } x_i \textup{ of degree at least } d+1 \rang
\] 
considered as a real vector space. Corollary~\ref{corollary: homogeneity means quotient works} is an application of the general framework of quotient splines that we develop in Section~\ref{sec: quotient splines}, together with Billera's result. 

Classical splines do not naturally form a ring since multiplication generally increases degree.  This is in contrast to the setting of generalized splines, where the module of generalized splines associated to a fixed ring and edge-labeled graph  actually forms a ring.  (See the discussion after Remark~\ref{remark:gkm}.)     However, identifying the vector space of classical splines with the elements of this quotient space allows us to consider a ring structure on splines. In this sense, Corollary~\ref{corollary: homogeneity means quotient works} {\em provides a new algebraic tool} for classical splines.

Under the assumptions of the previous paragraph, we prove one other main result. Theorem~\ref{thm:polysplines} computes explicit homogeneous generators for all generalized splines on cycles whose edges are labeled by polynomials of the form $(x+ay)^2$
and shows that these generalized splines cannot be obtained from fewer generators. Indeed, it shows that these generators form a basis. This is a remarkably uniform result that depends only on the number of distinct edge-labels, and not the underlying geometry.

\begingroup
\renewcommand\thetheorem{\ref{cor:polysplines}}
\begin{corollary}
Suppose $C_n$ is a cycle with $n$ vertices and that each edge is labeled by a principal ideal generated by a polynomial of the form $(x+ay)^2$.  Then the module of generalized splines has a basis of the following form:
\begin{itemize}
    \item If there is only one distinct edge-label: one homogeneous generator of degree zero and $n-1$ of degree two.
    \item If there are two distinct edge-labels: one homogeneous generator of degree zero, $n-2$ of degree two, and one of degree four.
    \item If there are at least three distinct edge-labels: one homogeneous generator of degree zero, $n-3$ of degree two, and two of degree three.
\end{itemize}
\end{corollary}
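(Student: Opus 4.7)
The plan is to split by the number of distinct edge-labels on $C_n$ and invoke the relevant main theorem of the paper in each case.

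For one distinct label, Theorem~\ref{thm:onelabel} shows that the spline module on $C_n$ is free of rank $n$ over $\mathbb{C}[x,y]$ when every edge carries the same principal ideal $((x+ay)^2)$. The constant vector $(1,1,\ldots,1)$ supplies the degree-$0$ basis element, and for each vertex $v_i$ with $i \geq 2$ the spline that places $(x+ay)^2$ on $v_i$ and $0$ on every other vertex is easily checked to be valid (its only nonzero jumps lie in the labeling ideal) and is homogeneous of degree $2$. Together these $n$ splines give the claimed basis.

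For two distinct labels, I would apply Theorem~\ref{thm:2labalg} directly. The algorithm there uses the connectivity of the monochromatic subgraphs picked out by each label; on $C_n$ these subgraphs are two complementary arcs, and the theorem produces a minimum generating set of size $n$ consisting of the all-ones spline plus $n-1$ flow-up generators. Choosing the basepoint in the interior of one of the arcs, all but one of these flow-ups can be realized in degree $2$, using the degree-$2$ generator of the appropriate edge-ideal as the nonzero entry. The final flow-up, at the ``far end'' from the basepoint, must straddle both labels simultaneously and therefore realizes a scalar multiple of $\lcm{(x+ay)^2,\,(x+by)^2}$. Coprimality of these two forms (since $a \neq b$) makes this lcm equal to the product $(x+ay)^2(x+by)^2$, of degree $4$, giving the single degree-$4$ generator.

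For three or more distinct labels, Theorem~\ref{thm:polysplines} is precisely this case and supplies the explicit homogeneous generators of degrees $0$, $2$, and $3$ together with minimality.

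In all three cases the minimum generating set has exactly $n$ elements, matching the rank of the free module of splines on $C_n$ over $\mathbb{C}[x,y]$ with principal edge-labels, and a minimum generating set of size equal to the rank of a free module is automatically a basis; this upgrades ``minimum generating set'' to ``basis.'' I expect the main obstacle to be the degree bookkeeping in the two-label case: one must verify that, when the algorithm of Theorem~\ref{thm:2labalg} is specialized to a cycle with labels of the specific form $(x+ay)^2$ and $(x+by)^2$, the flow-up generators can actually be chosen in degree $2$ (rather than some larger even degree) and the interface generator has degree exactly $4$.
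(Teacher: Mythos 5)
Your proposal is correct and follows essentially the same route as the paper: case (1) via Theorem~\ref{thm:onelabel}, case (2) by specializing the algorithm of Theorem~\ref{thm:2labalg} to a cycle with a vertex ordering chosen so that only the final generator falls into the $\lcm{\si,\sj}$ case, and case (3) via Theorem~\ref{thm:polysplines}. The ``degree bookkeeping'' you flag in the two-label case is exactly what the paper's proof carries out, by taking the \emph{last} vertex $v_n$ to be one incident to two differently-labeled edges and checking that the monochromatic component of each $v_i$ with $i<n$ avoids all earlier vertices (so Case (a) of Theorem~\ref{thm:2labalg} gives a degree-two generator), while $v_n$ forces Case (b) and the degree-four generator $\sfa\sfb$.
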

\endgroup
Section~\ref{sec:apps} focuses on applications related to the {\em lower bound conjecture} in the classical theory of bivariate splines.  The lower bound conjecture arises from an explicit polynomial in $r,d$ constructed from a triangulation of a given region $\Delta$ of the plane. Strang conjectured that this polynomial computes the dimension of $S^r_d(\Delta)$ for specific families of $r$, $d$, and $\Delta$ \cite{Str74}.  Schumaker showed that in fact the polynomial is a lower bound for all $r,d,\Delta$ \cite{Sch79}.  Considerable work has happened on this problem since: Alfeld and Schumaker showed that the polynomial gives the dimension when $d \geq 4r+1$ \cite{AlfSch87}, which Hong later tightened to $d \geq 3r+2$ \cite{Hon91}; at the same time, Billera proved Strang's conjecture for $r=1$ and $d=3$ as long as the triangulation $\Delta$ is generic \cite{Bil88}.  

Our last two results provide a theoretical foundation contextualizing the lower bound formula when $r=1$ and $d=3$.  In this case, most mathematicians believe the formula actually computes the dimension of $S^1_3(\Delta)$; there are no known counterexamples to this claim despite significant and ongoing efforts \cite{SchSti02,StiYua19,SSY}.   (See \cite[Chapter 9]{LaiSch07} for more history and context.) 

We show:
\begin{itemize}
    \item Theorem~\ref{thm:polysplines}, Lemma~\ref{lem:dualcycles}, and Corollary~\ref{corollary: dimension of splines on pinwheels} together give an alternative proof of Schumaker's characterization of classical splines on a single interior cell, namely the ``pinwheel triangulations" consisting of a single interior vertex and a number of triangles incident to that vertex and covering a small neighborhood around that vertex \cite[Theorems 9.3 and 9.12]{LaiSch07}.
    \item When $r=1$ and $d=3$, the lower bound formula consists of terms contributed by boundary and interior vertices of the triangulation, a correction term for certain interior vertices called ``singular vertices," and a constant term from polynomials defined on the entire triangulation (not piecewise).  Corollary~\ref{cor:final} explains the correction term accounting for ``singular vertices" as the unique geometrically realizable triangulations that correspond to cycles with exactly two distinct edge-labels.
\end{itemize} 

\subsection*{Acknowledgments}

Michael DiPasquale's assistance with Macaulay2 \cite{M2} was invaluable to the data collection that seeded this paper. The second author would like to thank Cleo Roberts and Claudia Yun for helpful discussions about splines.  The authors are also grateful to the anonymous referee; their comments greatly improved the article and also encouraged us to   write Section~\ref{sec: quotient splines}. 

\section{Generalized splines on graphs}
\label{sec:splines}

This section reviews basic definitions and constructions, including terminology from graph theory and essential results about generalized splines. We state most results in this paper for generalized splines on connected graphs because splines for arbitrary graphs can be obtained from generalized splines on the connected components via direct sum (see Proposition \ref{proposition: direct sum of connected graphs}).  

\subsection{Graphs}
\label{sec:graphs}

For a graph $G = (V,E)$, we denote its (finite) set of vertices by $V$ and its (finite) set of edges by $E$.  We write elements of $E$ as pairs of distinct vertices; for example, $e=uv$ is the edge that joins vertex $u$ and vertex $v$, and we say $u$ and $v$ are adjacent. (Note that $uv = vu$ since edges are unoriented.)  
Graphs in this paper have at most one edge between any given pair of vertices.  

If $G' = (V',E')$ is another graph such that $V' \subseteq V$ and $E' \subseteq E$, then $G'$ is called a {\em subgraph} of $G$.  The {\em induced subgraph} $G[V']$ of $V'$ is the graph with vertex set $V'$ and edge set consisting of all edges in $E$ with both vertices in $V'$.  The {\em neighborhood} $N_G(V')$ of $V'$ is the set of vertices in $V$ that are adjacent to at least one vertex in $V'$.  We also define the graph $G - E' := (V,E\setminus E')$.  

A {\em path} in $G$ is a finite sequence of edges $(u_1u_2, u_2u_3, \ldots, u_{n-2}u_{n-1}, u_{n-1}u_n)$, with each $u_i \in V$, such that each pair of successive edges shares a vertex.  A {\em connected component} of $G$ is a subgraph $G'$ of $G$ with the property that any two vertices of $G'$ are joined by a path lying entirely in $G'$.  If $G$ has exactly one connected component, then $G$ is a {\em connected graph}.

\begin{proposition}\label{prop:order}
If $G = (V,E)$ is a connected graph, then there is an ordering $v_1, \ldots, v_{|V|}$ on $V$ such that for every $1 < i \le |V|$ the vertex $v_i$ is adjacent to at least one of the vertices $v_1, v_2, \ldots, v_{i-1}$.   
\end{proposition}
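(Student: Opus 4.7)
The plan is to construct the ordering greedily, starting from an arbitrary vertex and at each step appending any vertex adjacent to the already-chosen ones. The only thing to verify is that such a vertex always exists as long as we have not yet exhausted $V$, and this is exactly where connectivity of $G$ enters.

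First I would fix an arbitrary vertex $v_1 \in V$. Then, assuming inductively that $v_1, \ldots, v_{i-1}$ have been chosen for some $1 < i \le |V|$ so that each $v_j$ with $1 < j < i$ is adjacent to some earlier vertex, I let $S = \{v_1, \ldots, v_{i-1}\}$ and claim that the neighborhood $N_G(S)$ contains some vertex outside of $S$. Once this claim is established, I choose any such vertex as $v_i$; by construction $v_i$ is adjacent to some $v_j \in S$, and the inductive hypothesis is preserved. After $|V|$ steps the ordering is complete.

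The heart of the argument is the claim, which is where I would use connectivity. Suppose for contradiction that every vertex in $N_G(S)$ already lies in $S$; equivalently, no edge of $E$ has one endpoint in $S$ and the other in $V \setminus S$. Since $i \le |V|$, the set $V \setminus S$ is nonempty, so I can pick $w \in V \setminus S$. Because $G$ is connected, there is a path from $v_1$ to $w$; walking along this path, there must be some edge whose first endpoint lies in $S$ and whose second endpoint lies in $V \setminus S$, contradicting our assumption. Hence some vertex of $V \setminus S$ is adjacent to a vertex of $S$, as required.

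There is no real obstacle here; the result is essentially a restatement of connectivity in a form suitable for inductive proofs on $G$ (it is the graph-theoretic analogue of a BFS/DFS traversal). The only thing to be careful about is that the ordering is built one vertex at a time from the \emph{whole} already-chosen prefix, rather than requiring adjacency to the immediately preceding vertex $v_{i-1}$, which would fail in general.
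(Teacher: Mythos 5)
Your proposal is correct and follows essentially the same greedy/inductive construction as the paper; the only cosmetic difference is that you justify the existence of a crossing edge by walking along a path from $v_1$ to a vertex outside the prefix, whereas the paper phrases the same fact as ``otherwise the prefix would be a connected component, contradicting connectivity.'' Both are valid and equivalent.
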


\begin{proof}
We proceed by induction on the number of vertices currently ordered.  For the base case, arbitrarily choose a first vertex $v_1 \in V$. Assume as inductive hypothesis that we have ordered $v_1, \ldots, v_k$ for some $1 < k < |V|$ so that $v_i$ is adjacent to at least one of the vertices $v_1, v_2, \ldots, v_{i-1}$ for each $i \leq k$.  Suppose that $k < |V|$ and that the neighborhood $N_G(\{v_1, \ldots, v_k\})$ is contained in the set $\{v_1, \ldots, v_k\}$.  Then no vertex in $V - \{v_1, \ldots, v_k\}$  shares an edge with any vertex in $\{v_1, \ldots, v_k\}$.  This means that $G$ is disconnected, contradicting our hypothesis on $G$.   Thus if $|V| \geq k+1$ there is some $v_{k+1} \in N_G(\{v_1, \ldots, v_k\})$, and the claim holds by induction.
\end{proof}

\subsection{Generalized splines and minimum generating sets}
\label{sec:mingen}

Let $R$ be a commutative ring with identity denoted by $1$.  (We will add more conditions on $R$ as they become necessary.)  Let $\cI$ be the set of ideals of $R$.  A function $\alpha \co E \ra \cI$ is called an {\em edge-labeling} of $G$.   We write $(G,\alpha)$ to mean a graph together with an edge-labeling, and call it an {\em edge-labeled graph}. If $|\alpha(E)| \leq k$, then we call $(G,\alpha)$ a {\em $k$-labeled graph}.  Note that, we have suppressed explicit mention of the vertex set and edge set in the notation of an edge-labeled graph, but these will always be clear from context.  

\begin{definition}\label{def:generalizedspline}
Let $(G,\alpha)$ be an edge-labeled graph.  A \emph{generalized spline} on $(G,\alpha)$ is a vertex-labeling $\bp \in \bigoplus_{v \in V} R$ that satisfies the GKM condition:
\[\begin{array}{c}
\text{for every edge } e = uv \in E, 
\textup{ the difference between } \\ \textup{ coordinates at the endpoints is } \bp_u - \bp_v \in \alpha(e).
\end{array}
\]
\end{definition}

For the remainder of the paper, we write ``spline" to mean ``generalized spline", unless we specifically write ``classical spline".  We also note that the definition of spline makes sense for noncommutative rings $R$, but, to our knowledge, this has not been explored deeply in the literature.

We sometimes write a spline $\bp = (\bp_{v_1}, \ldots, \bp_{v_{|V|}})$ as a $|V|$-tuple when we have a particular ordering on $V$ in mind.  See Figure~\ref{fig:splineexample} for an example of a spline on an edge-labeled graph.

\begin{remark}\label{remark:gkm}
The name ``GKM condition" (also appearing in \cite{GTV16}) refers to work by Goresky, Kottwitz, and MacPherson, where this condition appears while combinatorially computing the equivariant cohomology of certain varieties carrying well-behaved torus actions \cite{GKM98}.
\end{remark}

We write $R_{G,\alpha}$ for the set of splines on the edge-labeled graph $(G,\alpha)$.  It is well known that $R_{G,\alpha}$ is itself a ring with identity \cite[Proposition 2.4]{GTV16}.  The unit $\mathbf{1} \in R_{G,\alpha}$ is given by $\mathbf{1}_v = 1$ for all vertices $v$.  (We call $\mathbf{1}$ the \emph{trivial spline}.)  Addition and multiplication in $R_{G,\alpha}$ are defined pointwise; that is, $(\bp + \bq)_v = \bp_v + \bq_v$ and $(\bp \bq)_v = \bp_v \bq_v$ for all $v \in V$.  Moreover, $R_{G,\alpha}$ carries the structure of an $R$-module given by $r \cdot \bp = (r \bp_v)_{v \in V}$ for any $r \in R$.

The following proposition from \cite{GTV16} confirms that our results extend from connected graphs to arbitrary graphs.  Recall if $G' = (V', E')$ and $G'' = (V'', E'')$ are graphs, then their \emph{union} is defined as 
\[G' \cup G'' = (V' \cup V'', E' \cup E'').\]    

\begin{proposition}[{\cite[Proposition 2.11]{GTV16}}] \label{proposition: direct sum of connected graphs}
Let $(G', \alpha')$ and $(G'', \alpha'')$ be two disjoint edge-labeled graphs, namely $V' \cap V'' = \emptyset$ and $E' \cap E'' = \emptyset$.  If $G = G' \cup G''$ and $\alpha$ is the edge-labeling on $G$ defined by restricting to $\alpha'$ on $G'$ and $\alpha''$ on $G''$, then $R_{G,\alpha} = R_{G',\alpha'} \oplus R_{G'',\alpha''}$.
\end{proposition}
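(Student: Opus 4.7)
The plan is to exhibit a natural $R$-module isomorphism $R_{G,\alpha} \to R_{G',\alpha'} \oplus R_{G'',\alpha''}$ by splitting each spline according to which of the two vertex sets its coordinates live in. The key structural fact to exploit is that, since $V' \cap V'' = \emptyset$ and $E = E' \sqcup E''$, every edge of $G$ lies entirely in $G'$ or entirely in $G''$; in particular there are no edges of $G$ with one endpoint in $V'$ and the other in $V''$ (such an edge would have to appear in $E'$ or in $E''$, but then one of its endpoints would be forced to lie in $V' \cap V''$).

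First I would set up the candidate map. Since $V = V' \sqcup V''$, there is a canonical $R$-module isomorphism
\[
\bigoplus_{v \in V} R \;\cong\; \Bigl(\bigoplus_{v \in V'} R\Bigr) \oplus \Bigl(\bigoplus_{v \in V''} R\Bigr),
\]
under which any $\bp \in \bigoplus_{v \in V} R$ decomposes uniquely as $\bp = (\bp', \bp'')$ with $\bp'_v = \bp_v$ for $v \in V'$ and $\bp''_v = \bp_v$ for $v \in V''$. This decomposition is clearly $R$-linear and a bijection on the ambient modules of vertex-labelings; I would restrict it to splines.

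Next I would verify that this restriction lands in, and surjects onto, $R_{G',\alpha'} \oplus R_{G'',\alpha''}$. Given $\bp \in R_{G,\alpha}$, each edge $e = uv \in E'$ satisfies $\bp'_u - \bp'_v = \bp_u - \bp_v \in \alpha(e) = \alpha'(e)$, so $\bp' \in R_{G',\alpha'}$; symmetrically $\bp'' \in R_{G'',\alpha''}$. Conversely, given any $(\bq', \bq'') \in R_{G',\alpha'} \oplus R_{G'',\alpha''}$, define $\bp \in \bigoplus_{v \in V} R$ by gluing: $\bp_v = \bq'_v$ if $v \in V'$ and $\bp_v = \bq''_v$ if $v \in V''$. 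Every edge $e \in E$ lies wholly in $E'$ or $E''$ by the structural observation above, so the GKM condition on $\bp$ follows from the GKM condition on either $\bq'$ or $\bq''$, giving $\bp \in R_{G,\alpha}$.

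Finally, the decomposition map and its inverse are visibly $R$-linear, since the $R$-action on splines is coordinatewise and the splitting respects each coordinate. There is no real obstacle here; the only point that needs to be highlighted is the structural lemma that edges of $G = G' \cup G''$ cannot cross between $V'$ and $V''$, which is what allows GKM conditions on $G$ to decouple into GKM conditions on $G'$ and on $G''$.
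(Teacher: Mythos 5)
Your proof is correct and complete: the only substantive point is that every edge of $G = G' \cup G''$ lies entirely within $V'$ or entirely within $V''$, so the GKM conditions decouple, and you establish this cleanly before verifying that the coordinate-splitting map and its gluing inverse are $R$-linear and preserve splines. The paper itself does not prove this proposition (it cites \cite[Proposition 2.11]{GTV16}), and your argument is exactly the standard direct verification one would expect there.
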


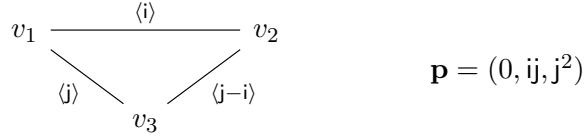
\begin{figure}
\begin{tikzcd}
v_1 \ar[rr,dash,"\lang\si\rang"] & & v_2 \\
 & v_3 \ar[ul,dash,"\lang\sj\rang"] \ar[ur,dash,swap,"\lang\sj-\si\rang"] & 
\end{tikzcd}
\qquad \qquad
\begin{tikzcd}
\bp = (0,\si\sj,\sj^2)
\end{tikzcd}
\caption{An edge-labeled complete graph on three vertices together with a spline $\bp$ on it. Here $\si,\sj \in R$.}
\label{fig:splineexample}
\end{figure}

In this paper, we present algorithms for producing minimum generating sets for $R_{G,\alpha}$ for a variety of edge-labeled graphs $(G,\alpha)$.

\begin{definition}
A \emph{generating set} $\cB$ for $R_{G,\alpha}$ is a set of splines in $R_{G,\alpha}$ which generates $R_{G,\alpha}$ as an $R$-module.  The set $\cB$ is called a \emph{minimum generating set} (MGS) if it is a generating set with the property that no other generating set has fewer elements than $\cB$.
\end{definition}

The general question of when $R_{G,\alpha}$ is a free $R$-module is complicated.  In topological applications, $R_{G,\alpha}$ is typically assumed to be free---this is the main implication of {\em equivariant formality}, which is one of the hypotheses in the machinery of GKM theory (see \cite{GKM98,Tym05} for more details).  In analytic applications, they need not be (see, for example, \cite{DiP12}).  For the most part we do not address this question, though the following lemma applies to several of our results.

\begin{lemma}\label{lemma: upper-triangular basis}
Let $R$ be an integral domain and $(G,\alpha)$ be an edge-labeled graph.  If $\mathcal{B}$ is an MGS for $R_{G,\alpha}$ that is triangular\footnote{An MGS $\cB$ is \emph{(upper or lower) triangular} with respect to a vertex ordering $v_1,\ldots,v_{|V|}$ on $V$ if, after ordering the entries of the elements of $\cB$ according to the ordering on $V$, the matrix whose columns are the elements of $\cB$ is a (upper or lower) triangular matrix with nonzero diagonal entries.  
} with respect to some vertex ordering $v_1, \ldots, v_{|V|}$ on $V$, then $R_{G,\alpha}$ is a free $R$-module with basis $\cB$.
\end{lemma}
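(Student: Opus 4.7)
The plan is straightforward: since $\cB$ is already a generating set by hypothesis, it suffices to establish $R$-linear independence; the two properties together make $\cB$ an $R$-basis and hence show $R_{G,\alpha}$ is free on $\cB$.

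Write $\cB = \{\bb_1,\ldots,\bb_k\}$ in the order forced by the triangular hypothesis, and let $M$ denote the matrix whose $i$-th column consists of the coordinates of $\bb_i$ with respect to the vertex ordering $v_1,\ldots,v_{|V|}$. For the phrase ``triangular with nonzero diagonal entries'' to be sensible $M$ must be square, so $k = |V|$. Suppose that $\sum_{i=1}^{|V|} r_i \bb_i = \bze$ for some scalars $r_i \in R$, which in matrix form reads $M\mathbf{r} = \bze$ with $\mathbf{r} = (r_1,\ldots,r_{|V|})^\top$. Assume $M$ is upper triangular (the lower triangular case is symmetric): the bottom equation then reads $M_{|V|,|V|}\, r_{|V|} = 0$. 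Because $M_{|V|,|V|} \neq 0$ and $R$ is an integral domain, $r_{|V|} = 0$. Substituting this back, the next equation up reduces to $M_{|V|-1,|V|-1}\, r_{|V|-1} = 0$, which forces $r_{|V|-1} = 0$ by the same reasoning. Iterating this back-substitution from the corner of the triangle opposite its base yields $r_i = 0$ for every $i$, so $\cB$ is $R$-linearly independent and therefore a basis.

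The only real ingredient beyond the shape of $M$ is the hypothesis that $R$ is an integral domain: this guarantees that the nonzero diagonal entries $M_{ii}$ are non-zero-divisors, so that each one-variable equation $M_{ii}\, r_i = 0$ has only the trivial solution. No genuine obstacle appears; the argument is essentially the standard fact that a square triangular matrix with non-zero-divisor diagonal entries acts injectively on $R^{|V|}$. The fact that $\cB$ is assumed to be an MGS (as opposed to merely a generating set) is not really used in the argument: once triangularity and nonzero diagonal entries are in place, any triangular generating set already forms a basis.
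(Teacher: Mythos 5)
Your proof is correct and follows essentially the same route as the paper's: both establish linear independence by exploiting triangularity together with the integral-domain hypothesis, solving the resulting triangular system one coordinate at a time (the paper phrases this as an induction on the index rather than back-substitution, but it is the same argument). Your closing observation that minimality of $\cB$ is not actually needed is accurate and consistent with the paper's proof.
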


\begin{proof}
Since the MGS $\mathcal{B}$ is triangular, it has at most $|V|$ elements and we may order the basis elements $\mathcal{B} = \{\bb^{i_1}, \bb^{i_2}, \ldots, \bb^{i_{k'}}\}$ so that $\bb^i_{v_j} = 0$ for all $j<i$ and $\bb^i_{v_i} \neq 0$ for all $i \in \{i_1,i_2,\ldots,i_{k'}\}$.  

Now suppose $\sum_{\bb^i \in \mathcal{B}} c_i \bb^i = 0$ is a linear dependence.  We prove by induction on $i$ that all $c_i$ are zero---the base case, that $c_{i_1} = 0$, is clear by triangularity.  If $c_i=0$ for all $i < i_0$, then we have
\[
\sum_{\bb^i \in \mathcal{B}} c_i \bb^i_{v_{i_0}} = c_{i_0} \bb^{i_0}_{v_{i_0}},
\]
since all $c_i$ with $i<i_0$ are zero by the inductive hypothesis and all $\bb^i_{v_{i_0}}$ with $i > i_0$ are zero by  triangularity.  We assumed the displayed expression was zero, so $c_{i_0} \bb^{i_0}_{v_{i_0}}=0$.  But $\bb^{i_0}_{v_{i_0}}$ is nonzero by assumption and $R$ is an integral domain, so $c_{i_0}$ is zero.  The claim follows.
\end{proof}

The next result gives a lower bound on the number of elements of an MGS.  (This lower bound does not hold if $R$ has zero divisors \cite{BowTym15}.)

\begin{lemma}\label{lem:folk}
If $R$ is an integral domain, then the number of elements of an MGS $\cB$ is at least $|V|$.
\end{lemma}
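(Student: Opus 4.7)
The plan is a rank argument over the field of fractions $K$ of $R$. Viewing the inclusions $R_{G,\alpha} \subseteq R^{|V|} \subseteq K^{|V|}$ and letting $W \subseteq K^{|V|}$ denote the $K$-span of $R_{G,\alpha}$, any $R$-generating set $\mathcal{B}$ of $R_{G,\alpha}$ is automatically a $K$-spanning set of $W$. Hence $|\mathcal{B}| \geq \dim_K W$, so it suffices to prove $\dim_K W \geq |V|$.

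For this, I would construct, for each vertex $v$, a \emph{single-vertex} spline $s^v \in R_{G,\alpha}$ whose only nonzero coordinate sits at $v$. The GKM condition for such a $s^v$ collapses to the single requirement $s^v_v \in \bigcap_{e \ni v} \alpha(e)$, so it suffices to set $s^v_v := \prod_{e \ni v} f_e$ for any nonzero $f_e \in \alpha(e)$; this product is nonzero because $R$ is a domain. The resulting $|V|$ splines $\{s^v\}_{v \in V}$ are supported on pairwise distinct single coordinates with nonzero values there, and so they are $K$-linearly independent in $K^{|V|}$. This forces $\dim_K W \geq |V|$, completing the proof.

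The one subtle point -- presumably tacitly assumed in the statement -- is that every $\alpha(e)$ is nonzero; otherwise the construction of $s^v$ breaks and the conclusion can actually fail (a single edge labeled by the zero ideal has $R_{G,\alpha} \cong R$, with MGS of size $1 < 2 = |V|$). Apart from this caveat, the entire argument is the standard \emph{flow-up} construction familiar from equivariant cohomology, and I do not expect any genuine obstacle in writing it up.
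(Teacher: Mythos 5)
Your proof is correct and takes essentially the same route as the paper: both pass to the field of fractions and exhibit a free rank-$|V|$ submodule of $R_{G,\alpha}$, the only difference being that you construct the $|V|$ linearly independent indicator splines explicitly where the paper cites \cite[Corollary 5.2]{GTV16} for that submodule. Your caveat about zero edge-labels is a fair observation, but it applies equally to the paper's version and is implicitly excluded throughout.
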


\begin{proof}
If $R$ is an integral domain and $(G,\alpha)$ is a connected edge-labeled graph, then the module of splines $R_{G,\alpha}$ contains a free $R$-submodule $M$ generated by $|V|$ elements \cite[Corollary 5.2]{GTV16}.  Now consider the image of the $R$-modules $R_{G,\alpha} \supseteq M$ under the map induced by including $R$ into its field of fractions. The image of $M$ is a vector space of dimension $|V|$ and the image of $R_{G,\alpha}$ is a vector space that both contains the image of $M$ and is generated by the image of $\cB$.  Thus there are at least $|V|$ elements in $\cB$.
\end{proof}

\begin{remark}\label{rem:principalisok}
Many of our key results apply to edge-labelings by finitely-generated ideals.  However, our results treat principal ideals.  We do this for two reasons.  First, most applications of splines use edge-labels that are principal ideals (see Section~\ref{sec:applications} for more).  Second, our arguments usually generalize easily to finitely-generated ideals.  Indeed, the main step of many of our arguments uses triangular MGSs in which each generator also satisfies ${\bf b}^i_u \in \{0,r\}$ for all $u\in V$ and some fixed ring element $r$.  In this context, it is straightforward to extend the main results of this paper from edge-labels that are principal ideals to edge-labels that are finitely generated: instead of creating a single spline ${\bf b}^i$ for which ${\bf b}^i_{v_i}$ generates the principal ideal associated to an edge incident to $v_i$, we create a set of generators $\{{\bf b}^{i,1}, {\bf b}^{i,2},\ldots, {\bf b}^{i,k}\}$ that minimally generate the ideal labeling that edge. (This kind of argument has been used previously in the literature \cite[Propositions 2.4 and 2.6]{HarTym17}.) Expanding the generator set in this fashion gives analogous versions for edge-labelings with finitely-generated ideals of Theorem~\ref{thm:onelabel}, Theorem~\ref{thm:2labalg}, and the dimension computations in Corollary~\ref{cor:repeatedge}.
\end{remark}

\section{Algorithm to produce an MGS on $2$-labeled graphs}
\label{sec:algorithm}

Recall that a $k$-labeled graph is an edge-labeled graph $(G,\alpha)$ such that $|\alpha(E)| \le k$; i.e., the set of edge-labels used consists of at most $k$ distinct ideals.  In this section, we give an algorithm to produce an MGS for an arbitrary connected $2$-labeled graph $(G,\alpha)$.  As a warm-up, in Section~\ref{sec:onelabel}, we treat the $1$-labeled case; in Section~\ref{sec:twolabels}, we treat the $2$-labeled case. Throughout this section, $G$ denotes an arbitrary connected graph.

\subsection{One edge-label}
\label{sec:onelabel}
Let $\alpha \co E \ra \cI$ be a constant edge-labeling function; that is, the image of $\alpha$ consists of a single principal ideal $I = \lang \si \rang$.  For a given $v \in V$, denote by $\bI^v$ the \emph{indicator spline} of the ideal $I$ at the vertex $v$.  In other words, $\bI^v$ is the spline with $\bI^v_v = \si$ and $\bI^v_u = 0$ for all $u \ne v$.

\begin{theorem}\label{thm:onelabel}
Fix an ordering $v_1,\ldots,v_{|V|}$ on $V$ as in Proposition~\ref{prop:order}, and let $\alpha \co E \ra \cI$ be the constant edge-labeling $\alpha(e) = I = \lang \si \rang \in \cI$ for all $e \in E$.  

Then the set $\cB = \{\mathbf{1},\bI^{v_2},\ldots,\bI^{v_{|V|}}\}$ is an MGS for $R_{G,\alpha}$.  Moreover, if $R$ is an integral domain then $R_{G,\alpha}$ is a free $R$-module with basis $\cB$.
\end{theorem}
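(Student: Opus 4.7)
The plan is to verify the three parts in sequence: (i) every element of $\cB$ is actually a spline, (ii) $\cB$ generates $R_{G,\alpha}$, and (iii) $|\cB| = |V|$ attains the minimum from Lemma~\ref{lem:folk}. The freeness claim will then follow from Lemma~\ref{lemma: upper-triangular basis} once we spot that $\cB$ is triangular in the given ordering.

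For (i), the trivial spline $\mathbf{1}$ has all edge-differences equal to $0\in I$. For $\bI^{v}$, the only edges where the difference is nonzero are those incident to $v$, and on such an edge the difference is $\pm \si \in I$.

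For (ii), fix any $\bp \in R_{G,\alpha}$. I claim $\bp_{v_i}-\bp_{v_1}\in I$ for every $i$. Because $G$ is connected, Proposition~\ref{prop:order} (or just the definition of connectedness) gives a path $v_1=u_0,u_1,\dots,u_m=v_i$, and telescoping yields
\[
\bp_{v_i}-\bp_{v_1}=\sum_{j=1}^{m}\bigl(\bp_{u_j}-\bp_{u_{j-1}}\bigr),
\]
with each summand in $I$ by the GKM condition. Hence there exist $c_i\in R$ with $c_i\si=\bp_{v_i}-\bp_{v_1}$ for $i\ge 2$; setting $c_1:=\bp_{v_1}$ and comparing at each vertex shows
\[
\bp = c_1\mathbf{1}+\sum_{i=2}^{|V|} c_i\,\bI^{v_i},
\]
so $\cB$ spans $R_{G,\alpha}$ as an $R$-module.

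For (iii), since $R$ is a UFD (hence a domain), Lemma~\ref{lem:folk} guarantees any generating set has at least $|V|$ elements, and $|\cB|=|V|$, so $\cB$ is an MGS. For the freeness statement, arrange the elements of $\cB$ as the columns of the $|V|\times|V|$ matrix whose rows are indexed by $v_1,\dots,v_{|V|}$: the first column is all $1$'s, and column $i\ge 2$ has $\si$ in row $i$ and $0$ elsewhere. This is lower triangular with nonzero diagonal $1,\si,\si,\dots,\si$, so Lemma~\ref{lemma: upper-triangular basis} (whose footnote allows the lower-triangular version) shows $\cB$ is a free basis. No step looks like a real obstacle here; the only subtle point is the degenerate case $\si=0$, where the generating argument still works but the matching with Lemma~\ref{lem:folk} degenerates, so the argument implicitly uses that $\langle\si\rangle$ is a nonzero ideal.
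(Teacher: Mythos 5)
Your proposal is correct and follows essentially the same route as the paper: the same decomposition $\bp = \bp_{v_1}\mathbf{1} + \sum_{i\ge 2} c_i\,\bI^{v_i}$, minimality via Lemma~\ref{lem:folk}, and freeness via triangularity and Lemma~\ref{lemma: upper-triangular basis}. The only difference is that you establish $\bp_{v_i}-\bp_{v_1}\in I$ by telescoping along a path (using that $I$ is closed under addition) where the paper runs an induction along the ordering of Proposition~\ref{prop:order}; this is a cosmetic variation, and your side remark about the degenerate case $\si=0$ is a fair observation that applies equally to the paper's argument.
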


\begin{proof}
 Let $\bp \in R_{G,\alpha}$ be an arbitrary spline.  We claim that there exist $r_2,\ldots,r_{|V|} \in R$ such that
\begin{equation}\label{eqn:onelabel}
\bp = \bp_{v_1}\mathbf{1} + r_2 \bI^{v_2} + \cdots + r_{|V|} \bI^{v_{|V|}}.
\end{equation}
We will prove the following statement, which is equivalent to Equation \eqref{eqn:onelabel}: for every $2 \le j \le |V|$, there exists $r_j \in R$ such that $\bp_{v_j} - \bp_{v_1} = r_j \si$. We proceed by induction on $j$.

When $j=2$, Proposition~\ref{prop:order} ensures that $v_2$ is adjacent to $v_1$.   Thus there exists $r_2 \in R$ such that $\bp_{v_2} - \bp_{v_1} = r_2 \si \in \alpha(v_2v_1)$, as desired. 

Our inductive hypothesis states: if $j \in \{2, \ldots, |V|-1\}$, then for all $k$ with $2 \le k \le j$ there exists $r_k \in R$ such that $\bp_{v_k} - \bp_{v_1} = r_k \si$.  By Proposition~\ref{prop:order}, the vertex $v_{j+1}$ is adjacent to some $v_{\ell}$ with $1 \le \ell \le j$.  If $v_{j+1}$ is adjacent to $v_1$, then the GKM condition ensures that there exists $r_{j+1} \in R$ with $\bp_{v_{j+1}} - \bp_{v_1} = r_{j+1} \si$ as desired.  Otherwise, the spline $\bp - \bp_{v_1}\mathbf{1} - r_{\ell} \bI^{v_{\ell}}$ satisfies 
\[(\bp - \bp_{v_1}\mathbf{1} - r_{\ell} \bI^{v_{\ell}})_{u} = \left\{ \begin{array}{ll}
0 & \textup{ when } u = v_{\ell}, \\
\bp_{v_i} - \bp_{v_1} & \textup{ when } u = v_i \textup{ for } i\neq \ell. \end{array} \right.\]  
The GKM condition when $u=v_{j+1}$ implies that there is some $r_{j+1} \in R$ such that $\bp_{v_{j+1}} - \bp_{v_1} = r_{j+1} \si \in \alpha(v_{j+1}v_\ell)$.  Equation~\eqref{eqn:onelabel} follows by induction, so $\cB$ is an MGS for $R_{G,\alpha}$ by Lemma~\ref{lem:folk}.

Finally, if $R$ is an integral domain then Lemma~\ref{lemma: upper-triangular basis} applies, proving that $R_{G,\alpha}$ is free with basis $\cB$.
\end{proof}

\subsection{Two edge-labels}
\label{sec:twolabels}

Now suppose the edge-labeling $\alpha\co E \ra \cI$ has image $\{I,J\} \subseteq \cI$ with $I = \lang \si \rang$ and $J = \lang \sj \rang$.  The theorem below gives an algorithm for producing an MGS for $R_{G,\alpha}$.  The basic idea of the proof is to consider the neighbors of each vertex $v_i$ successively.  If $v_i$ is connected to the first $i-1$ vertices only through paths with a single edge-label, then we can find a generator that uses only that edge-label; otherwise, we need a generator that is an indicator spline with nonzero entry given by the product of the two edge-labels.

We start with a graph-theoretic lemma.  Recall that a \textit{simple} path is a path in which no vertices are repeated.

\begin{lemma} \label{lemma: 2-label cut-sets}
Let $(G,\alpha)$ be a connected edge-labeled graph with edge-labeling $\alpha\co E \rightarrow \mathcal{I}$ such that $|\alpha(E)| = 2$.  Fix a vertex $u$ and a subset of vertices $S \subseteq V - \{u\}$ with nonempty intersection $N_G(\{u\}) \cap S$. Then the following are equivalent: 
\begin{enumerate}
    \item Choose any $u' \in S$.  Let
    \[G' = G - \{vw \in E \mid \alpha(vw) = \alpha(uu')\}\]
    and let $C=(V',E')$ be the connected component of $G'$ containing $u$.  Then $V' \cap S$ is empty.
    \item Every simple path from $u$ to a vertex in $S$ has at least one edge labeled $\alpha(uu')$.
\end{enumerate}
\end{lemma}

\begin{proof}
Let $\alpha(uu')=I$ and write $\alpha(E) = \{I, J\}$.  The edges in $G'$ are precisely the edges in $G$ that are labeled $J$.  The vertex $u$ is connected to a vertex $u'$ in the graph $G'$ if and only if there is a path between $u$ and $u'$ in $G'$ --- in other words, if and only if there is a path from $u$ to $u'$ whose edges are all labeled $J$.  It follows that Condition (1) fails if and only if Condition (2) fails, as desired.
\end{proof}

We now prove our claim.

\begin{theorem}\label{thm:2labalg}
Let $R$ be a UFD.  Let $(G,\alpha)$ be a connected edge-labeled graph with edge-labeling $\alpha\co E \ra \cI$ having image $\{\lang\si\rang,\lang\sj\rang\}$. Choose an ordering on $V$ as in Proposition~\ref{prop:order}.  For every $1 < i \le |V|$, define the spline $\bb^{i}$ as follows.
Choose some $v_j\in N_G(\{v_i\})$ with $j < i$, and write $\lang\sk\rang := \alpha(v_iv_j)$.  Let 
\[G' = G - \{uv \in E \mid \alpha(uv)=\lang \sk \rang\},\] 
and let $C = (V',E')$ be the connected component of $G'$ containing $v_i$.  
\begin{enumerate}
\item[(a)] If $V' \subseteq \{v_i,v_{i+1},\ldots,v_{|V|}\}$, then set $\bb^{i}_u = \sk$ for all $u \in V'$ and $\bb^{i}_u = 0$ for all $u \not\in V'$.
\item[(b)] Otherwise, set $\bb^{i}_{v_i} = \lcm{\si,\sj}$ and $\bb^{i}_u = 0$ for all $u \ne v_i$.
\end{enumerate}
Each $\bb^i$ depends only on the ordering of the vertices in $V$ and not on the choice of vertex $v_j$ in $N_G(\{v_i\})$ used to define $\bb^i$. Moreover $R_{G,\alpha}$ is a free $R$-module, and the set $\cB = \{\mathbf{1}, \bb^{2}, \ldots, \bb^{{|V|}}\}$ is a basis for $R_{G,\alpha}$.  
\end{theorem}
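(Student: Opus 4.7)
The plan is to prove three things in sequence: (i) each candidate $\bb^i$ actually lies in $R_{G,\alpha}$, (ii) the collection $\cB = \{\mathbf{1},\bb^2,\ldots,\bb^{|V|}\}$ is lower triangular with respect to the ordering $v_1,\ldots,v_{|V|}$ and spans $R_{G,\alpha}$, and (iii) freeness and the basis property then follow from Lemma~\ref{lemma: upper-triangular basis} (together with Lemma~\ref{lem:folk} to see $\cB$ is minimum).

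For (i), the trivial spline is clear. In Case (b), $\bb^i$ is an indicator at $v_i$ with value $\lcm{\si,\sj}$, which lies in both $\lang\si\rang$ and $\lang\sj\rang$, so the GKM condition is satisfied on every edge incident to $v_i$ and trivially elsewhere. In Case (a), $\bb^i$ takes value $\sk$ on $V'$ and $0$ off $V'$. For any edge $uv$ with exactly one endpoint in $V'$, the edge cannot lie in $G'$ (otherwise the other endpoint would also be in $V'$), so it carries label $\lang\sk\rang$, and the difference $\pm\sk$ lies in that ideal. For edges with both endpoints in $V'$ or both outside $V'$, the difference is $0$.

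For (ii), observe that by construction $\bb^i_{v_j}=0$ for every $j<i$ (in Case (a), because $V'\subseteq\{v_i,\ldots,v_{|V|}\}$; in Case (b), trivially), and $\bb^i_{v_i}$ is a nonzero element of $R$. Hence the collection is lower triangular with nonzero diagonal. To show it spans, take an arbitrary spline $\bp$ and peel off coefficients greedily: set $\bp^{(1)}=\bp-\bp_{v_1}\mathbf{1}$, and inductively define $\bp^{(i)}=\bp^{(i-1)}-r_i\bb^i$ where $r_i\in R$ satisfies $r_i\bb^i_{v_i}=\bp^{(i-1)}_{v_i}$. The key point is that such $r_i$ exists, i.e., $\bb^i_{v_i}$ divides $\bp^{(i-1)}_{v_i}$. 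In Case (a), the specific neighbor $v_j$ of $v_i$ with $j<i$ and $\alpha(v_iv_j)=\lang\sk\rang$ satisfies $\bp^{(i-1)}_{v_j}=0$ by the inductive hypothesis, so the GKM condition gives $\sk\mid\bp^{(i-1)}_{v_i}$. In Case (b), the same reasoning along the chosen edge $v_iv_j$ gives $\sk\mid\bp^{(i-1)}_{v_i}$; additionally, since $V'$ contains some $v_m$ with $m<i$, there is a path in $G'$ from $v_i$ to $v_m$ whose edges all carry the \emph{other} label, and iterating the GKM condition along this path (each intermediate vertex contributes a term in the other ideal) yields divisibility by the other generator as well. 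Finally, the UFD hypothesis ensures that divisibility by both $\si$ and $\sj$ implies divisibility by $\lcm{\si,\sj}=\bb^i_{v_i}$. Since subtracting $r_i\bb^i$ leaves all earlier coordinates at $0$ (by triangularity) and kills the $v_i$-coordinate, after $|V|$ steps we reach the zero spline, exhibiting $\bp$ as the desired $R$-linear combination.

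For (iii), triangularity and Lemma~\ref{lemma: upper-triangular basis} give that $\cB$ is an $R$-basis of $R_{G,\alpha}$, so $R_{G,\alpha}$ is free of rank $|V|$; Lemma~\ref{lem:folk} confirms that $|V|$ is optimal. The main obstacle I anticipate is cleanly handling the Case (b) divisibility: one must verify that the iterated GKM condition along a path in $G'$ really does propagate divisibility by the ``other'' generator all the way to $v_i$, using that at each step the relevant coordinate has already been zeroed by the inductive procedure (or equivalently, that $\bp^{(i-1)}$ vanishes on $v_1,\ldots,v_{i-1}$, so when the path first reaches a vertex with index $<i$ the difference telescopes to an element of the correct ideal). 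Everything else reduces to bookkeeping about which coordinates get modified at each step, guaranteed by the inclusion $V'\subseteq\{v_i,\ldots,v_{|V|}\}$ in Case (a) and by $\bb^i$ being an indicator in Case (b).
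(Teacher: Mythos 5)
Your proposal is correct and follows essentially the same strategy as the paper's proof: verify each $\bb^i$ satisfies the GKM condition, then inductively peel off coefficients vertex by vertex, using the edge $v_iv_j$ to get divisibility by $\sk$ in Case (a) and additionally a path in $G'$ to an earlier (already zeroed) vertex to get divisibility by the other generator in Case (b), with the UFD hypothesis yielding divisibility by $\lcm{\si,\sj}$, and finally Lemmas~\ref{lem:folk} and~\ref{lemma: upper-triangular basis} for minimality and freeness. If anything, your telescoping argument along the path in $G'$ makes the Case (b) divisibility step more explicit than the paper's version, which phrases the same point in terms of the residual spline lying in $I\cap J$.
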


\begin{remark}
Note that if the image of $\alpha$ is a single edge-label, then $C$ always consists of the single vertex $v_i$ and Case (b) never applies.  Thus, Theorem~\ref{thm:onelabel} is a special case of Theorem~\ref{thm:2labalg}.
\end{remark}

\begin{proof}
We prove that $\cB$ is an MGS for $R_{G,\alpha}$.  Since $R$ is an integral domain, Lemma~\ref{lemma: upper-triangular basis} then implies the claim.

First we show the definition of $\bb^i$ depends only on the order of the vertices and not on the choice of $v_j \in N_G(\{v_i\})$.  Indeed suppose we chose $v_j \in N_G(\{v_i\})$ for which Case (a) holds.  An edge is an example of a simple path; applying Lemma~\ref{lemma: 2-label cut-sets} allows us to conclude that $\alpha(v_i v_{j'}) = \lang \sk \rang$ for every edge $v_i v_{j'}$ with $j' < i$.  Thus if one choice of $v_j$ leads to Case (a) then any other choice of $\{v_1,\ldots,v_{i-1}\} \cap N_G(\{v_i\})$ gives rise to the same edge-label $\lang k \rang$ and hence the same graph $G'$ so also Case (a) and the same $\bb^i$.  Otherwise, all choices of $v_j \in N_G(\{v_i\})$ give rise to Case (b), for which $\bb^i$ is defined independent of $v_j$.

Next we confirm that for all $i > 1$ the function $\bb^{i}$ is a spline. There are two cases.
\begin{enumerate}
    \item[(a)] If $\bb^{i}$ was produced by Case (a), then for every edge $uw \in E$
\[
\bb^{i}_u - \bb^{i}_w = \left\{\begin{array}{ll}\sk - \sk = 0 & \text{if both } u,w \textup{ are in } V',\\ 0 - 0 = 0 & \text{if neither } u,w \textup{ are in } V',\\ \pm(\sk - 0) = \pm \sk & \text{if exactly one of } u, w \textup{ are in } V'. \end{array}\right.
\]
When the difference $\bb^{i}_u - \bb^{i}_w = 0$ the function $\bb^{i}$ satisfies the GKM condition at edge $uw$ trivially so the GKM condition is satisfied.  If exactly one of $u, w$ are in $V'$ then the edge $uw$ was deleted from $G$ to form $G'$ so $\alpha(uw) = \lang \sk \rang$.  Thus $\bb^{i}_u - \bb^{i}_w \in \alpha(uw)$ so $\bb^{i}$ is a spline.
\item[(b)]If $\bb^{i}$ was produced by Case (b), then for every edge $uw \in E$ the difference $\bb^{i}_u - \bb^{i}_w$ is either zero or a nonzero element of $\lang \si \rang \cap \lang \sj \rang$.  Thus $\bb^{i}$ satisfies the GKM condition at each edge and hence is a spline.
\end{enumerate}

Now we show that $\cB = \{\mathbf{1}, \bb^{2}, \ldots, \bb^{{|V|}}\}$ generates the set $R_{G,\alpha}$ of all splines. Let $\bp \in R_{G,\alpha}$ be an arbitrary spline.  We will confirm that $\bp$ is an $R$-linear combination of elements of $\cB$ by explicitly identifying coefficients $r_2,\ldots,r_{|V|} \in R$ such that
\begin{equation}\label{eqn:twolabel}
\bp = \bp_{v_1}\mathbf{1} + r_2 \bb^{2} + \cdots + r_{|V|} \bb^{{|V|}}.
\end{equation}
We induct on $i$ using the inductive hypothesis that we have a \textit{unique} sequence of coefficients $r_1, r_2, \ldots, r_{i-1} \in R$ so that $r_1 \mathbf{1}+r_2 \bb^{2} + \cdots + r_{i-1} \bb^{i-1}$ agrees with $\bp$ on the first $i-1$ vertices.  For the base case $i=2$, the assignment $r_1 = \bp_{v_1}$ is the \textit{only} coefficient for which 
\[r_1 \mathbf{1}_{v_1} = \bp_{v_1}\]
since $\mathbf{1}_{v_1}$ is the multiplicative identity in $R$. 

Next assume that we have a (unique) sequence of coefficients $r_1, r_2, \ldots, r_{i-1}$ satisfying the inductive hypothesis. Let
\[\bq = \bp - r_1 \mathbf{1} - r_2 \bb^2 - r_3 \bb^3 - \cdots - r_{i-1} \bb^{i-1}\]
and note that $\bq_{v_j}=0$ for all $j < i$.  We know $\bq$ is a spline since $R_{G,\alpha}$ is an $R$-module.  We will show that there is a unique coefficient $r_i \in R$ solving the equation
\[\bq_{v_i} = r_i \bb^i\]
after which the inductive claim for $\bp$ follows. By Proposition~\ref{prop:order}, there is at least one $v_k \in N_G(\{v_{i}\})$ with $k < i$.  Suppose without loss of generality  $\alpha(v_kv_i) = \lang \si \rang$.  Since $\bq_{v_k}=0$ by construction, either
\begin{itemize}
    \item[(i)] $\bq_{v_i} \in \lang \si \rang \cap \lang \sj \rang^c$ or 
    \item[(ii)] $\bq_{v_i} \in \lang \si \rang \cap \lang \sj \rang$.
\end{itemize}
By definition $\bb^i_{v_i}$ is one of $\si, \sj,$ or $\lcm{\si,\sj}$.  Thus in Case (ii) we know $\bb^i_{v_i}$ divides $\bq_{v_i}$.  For the same to hold in Case (i) we must show $\bb^i_{v_i} = \si$.  Note first that there is no path whose edges are all labeled $\lang \sj \rang$ from $v_i$ to any $v_j$ with $j<i$.  Indeed, if there were, the GKM conditions along this $\lang \sj \rang$-labeled path would imply 
\[\bq_{v_{i}}-\bq_{v_j} \in \lang \sj \rang\]
By construction $\bq_{v_j} = 0$ so we would have $\bq_{v_i} \in \lang \si \rang \cap \lang \sj \rang$.  This contradicts the hypothesis of Case (i).  Moreover Lemma~\ref{lemma: 2-label cut-sets} shows that $\bb^i$ was produced by Case (a) so $\bb^i_{v_i} = \si$.  

In both cases $\bb^i_{v_i}$ divides $\bp_{v_i}$.  Since $R$ is a UFD there is a unique coefficient $r_i$ solving 
\[\bp_{v_i} = r_i \bb^i_{v_i}\]
By induction, Equation \eqref{eqn:twolabel} holds so $\cB$ generates $R_{G,\alpha}$ as an $R$-module.  

Finally, we check that $\cB$ is an MGS for $R_{G,\alpha}$.  The set $\cB$ has exactly $|V|$ elements.  Lemma~\ref{lem:folk} guarantees that every MGS for $R_{G,\alpha}$ has at least $|V|$ elements so the claim follows.
\end{proof}

\begin{example}\label{eg:2labalg}
Consider the edge-labeled graph
\begin{tikzcd}
v_1 \ar[r,dash,"\lang\si\rang"] \ar[d,swap,dash,"\lang\sj\rang"] & v_2 \ar[d,dash,"\lang\sj\rang"] \\
v_3 \ar[r,swap,dash,"\lang\si\rang"] & v_4
\end{tikzcd}.
Note that we have chosen an ordering on the vertices as in Theorem~\ref{thm:2labalg} (or Proposition~\ref{prop:order}).  

To produce $\bb^{2}$, we look at all vertices that are connected to $v_2$ by paths labeled exclusively $\lang j \rang$.  This gives the set $C'=\{v_2, v_4\}$.  Thus we are in Case (a), so $\bb^{2}$ is zero on $\{v_1, v_3\}$ and $\si$ otherwise.

Similarly, to find $\bb^{3}$ we get the connected component $C'=\{v_3, v_4\}$ and are again in Case (a).  In this case, $\bb^{3}$ is zero on $\{v_1, v_2\}$ and $\sj$ otherwise.

However, when constructing $\bb^{4}$ we find that $C' = \{v_2,v_4\}$.  Thus $\bb^{4}$ is $\lcm{\si,\sj}$ on $v_4$ and zero otherwise.

The set $\cB = \{\mathbf{1}, \bb^{2}, \bb^{3}, \bb^{4}\}$ is an MGS for $R_{G,\alpha}$ by Theorem~\ref{thm:2labalg}.
\end{example}

\begin{remark}\label{rem:sing}
In the theory of classical splines, the previous example (that is, a four-cycle with two distinct edge-labels) is the unique edge-labeled graph that is dual to a certain triangulation called, in the classical spline literature, an interior cell (or pinwheel triangulation) containing a {\em singular vertex}.  See the left picture in Figure~\ref{figure: two interior cells} for such a triangulation with singular vertex $v$.  In Section~\ref{sec:apps}, more will be said about the interesting role that such singular vertices play in (bounds for) dimension formulas of the space of classical splines.
\end{remark}

\section{Polynomial splines on cycles}
\label{sec:cycles}

In Section~\ref{sec:algorithm}, we produced MGSs for arbitrary connected $2$-labeled graphs.  In this section, we treat an arbitrary number of edge-labels, but we restrict the types of graphs and ideals under consideration.

\subsection{Degree sequences for splines}
\label{sec:degseq}

Let $R = \Bbbk[x_1,\ldots,x_m]$ with $\Bbbk$ a field, and let $(G,\alpha)$ be an arbitrary edge-labeled graph.  Recall that throughout this paper, we assume that all ideals in the image of $\alpha$ are principal (see Remark~\ref{rem:principalisok}).  We now add the assumption that the ideals are generated by homogeneous elements and introduce an invariant of $(G,\alpha)$ called the ``degree sequence". (As described in the introduction and in Section~\ref{sec:applications}, homogeneity is a very natural condition in geometric and classical (analytic) applications.)

\begin{definition}\label{def:homogdegseq}
An MGS $\cB=\{\bb^1,\ldots,\bb^n\}$ is called {\em homogeneous} if, for each $1\leq i\leq n$, every nonzero entry of $\bb^i$ is a homogeneous polynomial of the same degree, which we denote as $\deg \bb^i$.
\end{definition}

\begin{definition}
Let $\cB$ be a homogeneous MGS. For each $j \in \mathbb{Z}_{\geq 0}$, let $d_j= \big\vert \{\bb\in \cB\mid \deg\bb=j\}\big\vert$. Then the {\em degree sequence} of $\cB$ is defined as $\overline{d}_{\cB} = (d_0,d_1, d_2, d_3, \ldots)$.
\end{definition}

\begin{remark}
The degree sequence only has a finite number of nonzero entries.  For instance, when edge-labels are principal ideals, no generator need have larger degree than that of the product of the edge-labels.  In particular, $d_m = 0$ if $m$ is greater than the sum of the degrees of the generators of the edge-labels.  (See also \cite[Corollary 5.2]{GTV16}.)
\end{remark}

For the remainder of the paper, we only consider principal ideals generated by degree-two elements of the form $(x+ay)^2$ with $0 \ne a \in \Bbbk$.  (See Section~\ref{sec:applications} for how this case appears in the study of classical splines.)  For convenience, we denote these edge-labels using a sans-serif letter; for example, we write $\sfa := (x+ay)^2$.

\begin{example}
Let $\Bbbk$ be a field, and let $R = \Bbbk[x,y]$. Consider the edge-labeled graph $(G,\alpha)$ given by
\[
\begin{tikzcd}
v_1 \ar[r,dash,"\lang\sfa\rang"] \ar[d,swap,dash,"\lang\sfb\rang"] & v_2 \ar[d,dash,"\lang\sfb\rang"] \\
v_3 \ar[r,swap,dash,"\lang\sfa\rang"] & v_4
\end{tikzcd}
\]
where $0 \ne a,b \in \Bbbk$.  This is a specialization of the edge-labeled graph in Example~\ref{eg:2labalg}.  Theorem~\ref{thm:2labalg} asserts that
\[
\cB = \{\mathbf{1},(0,\sfa,0,\sfa),(0,0,\sfb,\sfb),(0,0,0,\sfa\sfb)\}
\]
is a homogeneous MGS for $R_{G,\alpha}$.  The degree sequence of $\cB$ is thus $\overline{d}_\cB = (1,0,2,0,1)$.
\end{example}

We next prove that the degree sequence is an invariant of an edge-labeled graph.

\begin{proposition}\label{prop:degseqinvt}
Let $(G,\alpha)$ be a connected edge-labeled graph with edges labeled by principal polynomial ideals with homogeneous generators. Let $\cB$ and $\cB'$ be two homogeneous MGSs for $R_{G,\alpha}$ with degree sequences $\overline{d}_{\cB}$ and $\overline{d}_{\cB'}$, respectively.  Then $\overline{d}_{\cB} = \overline{d}_{\cB'}$.
\end{proposition}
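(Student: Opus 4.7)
The plan is to view $R_{G,\alpha}$ as a graded $R$-module and then invoke graded Nakayama. Because every ideal $\alpha(e)$ is generated by a homogeneous polynomial, the submodule $R_{G,\alpha} \subseteq \bigoplus_{v \in V} R$ inherits the grading from the ambient direct sum: if $\bp$ is a spline and $\bp^{(d)}$ denotes its entry-wise degree-$d$ homogeneous component, then for any edge $e = uv$ the difference $\bp^{(d)}_u - \bp^{(d)}_v$ is the degree-$d$ part of $\bp_u - \bp_v$ and hence still lies in the (homogeneous) ideal $\alpha(e)$. So $\bp^{(d)} \in R_{G,\alpha}$, and $R_{G,\alpha} = \bigoplus_{d \geq 0} (R_{G,\alpha})_d$ is a finitely generated graded $R$-module (finite generation follows from Lemma~\ref{lem:folk} together with any of the constructions in Section~\ref{sec:algorithm}).

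Next, I would apply graded Nakayama with respect to the irrelevant maximal ideal $\mathfrak{m} = \lang x_1, \ldots, x_m \rang$. The quotient $V := R_{G,\alpha}/\mathfrak{m}R_{G,\alpha}$ is a graded $\Bbbk$-vector space, and homogeneous elements of $R_{G,\alpha}$ generate it as an $R$-module if and only if their images span $V$ over $\Bbbk$. In particular $\dim_\Bbbk V$ is a lower bound on the size of any homogeneous generating set, and this bound is attained by lifting any homogeneous $\Bbbk$-basis of $V$. A purely degree-counting argument extends this lower bound to \emph{arbitrary} (possibly non-homogeneous) generating sets: the images of any generating set still span $V$, so every generating set has at least $\dim_\Bbbk V$ elements. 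Hence the minimum number of generators of $R_{G,\alpha}$ equals $\dim_\Bbbk V$.

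Now let $\cB$ be any homogeneous MGS. By definition it is a generating set of minimum cardinality, so $|\cB| = \dim_\Bbbk V$, and since its elements are homogeneous and span $V$, they must project to a homogeneous $\Bbbk$-basis of $V$. The number of elements of each degree $j$ in such a basis is the intrinsic quantity $\dim_\Bbbk V_j$, independent of any choices. Therefore $d_j = \dim_\Bbbk V_j$ for every $j$, and the same identity applies to $\cB'$, giving $\overline{d}_\cB = \overline{d}_{\cB'}$.

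The only subtle point is the step showing that the minimum size of a generating set (over \emph{all} generating sets) agrees with $\dim_\Bbbk V$, so that a homogeneous MGS is forced to map onto a basis rather than merely a spanning set of $V$; this is where the graded structure is essential, and it is the one place where some care is needed. Everything else is bookkeeping with the grading and a direct application of the homogeneous Nakayama lemma.
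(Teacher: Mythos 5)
Your proof is correct, but it takes a genuinely different route from the paper. The paper argues by induction on the degree $r$: the base case compares the degree-zero parts (which form $\Bbbk$-vector spaces, where MGSs are bases), and the inductive step writes each element of $\cB'$ of degree at most $r$ in terms of $\cB$, observes that the coefficients of higher-degree generators must vanish, and concludes that the elements of degree at most $r$ in $\cB$ and in $\cB'$ span the same submodule, so minimality forces $\sum_{i\le r} d_i = \sum_{i \le r} d_i'$. Your argument instead observes that homogeneity of the edge-labels makes $R_{G,\alpha}$ a graded submodule of $\bigoplus_{v} R$ and applies graded Nakayama at the irrelevant ideal $\mathfrak{m}$: every generating set surjects onto $V = R_{G,\alpha}/\mathfrak{m}R_{G,\alpha}$, a lift of a homogeneous basis of $V$ generates, so an MGS has exactly $\dim_\Bbbk V$ elements, and a \emph{homogeneous} MGS must therefore project to a homogeneous basis of $V$, forcing $d_j = \dim_\Bbbk V_j$. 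This is a clean, standard argument, and it buys strictly more than the paper's: it identifies the degree sequence with the intrinsic graded vector space $V$, and it shows that \emph{every} MGS (homogeneous or not) has cardinality $\dim_\Bbbk V$. The paper's induction is more elementary and self-contained, avoiding any appeal to Nakayama. One small correction: finite generation of $R_{G,\alpha}$ does not follow from Lemma~\ref{lem:folk} (a lower bound) or from the constructions of Section~\ref{sec:algorithm} (which only treat at most two edge-labels); the right reason is simply that $R_{G,\alpha}$ is a submodule of the finitely generated free module $\bigoplus_{v\in V} R$ over the Noetherian ring $R = \Bbbk[x_1,\dots,x_m]$. With that fixed, the argument is complete.
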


\begin{proof}
Let $\cB=\{\bb^1,\ldots, \bb^n\}$ and $\cB'=\{{\bb^1}',\ldots, {\bb^n}'\}$ be two homogeneous MGSs for $R_{G,\alpha}$ with degree sequences $\overline{d}_{\cB}=(d_0,d_1,d_2, \ldots)$ and $\overline{d}_{\cB'}=(d_0',d_1',d_2',\ldots)$.  (If needed, add terminal zeros so both sequences have the same length.) 
We show that $d_r=d_r'$ for each $r$ by induction on the index $r$ and prove as our base case that $d_0=d_0'$.  The degree-zero splines in $R_{G,\alpha}$ generate a $\Bbbk$-vector space.  The degree-zero splines in $\cB$ form an MGS for the degree-zero splines in $R_{G,\alpha}$, and likewise for $\cB'$.  Since MGSs in vector spaces are bases, and in particular have the same number of elements, the base case of our induction holds.  Now assume that $d_0 = d_0',d_1=d_1', \ldots, d_{r-1} = d_{r-1}'$.

Given ${\bb^i}'\in\cB'$, we can write ${\bb^i}'= k_{i,1}\bb^1+k_{i,2}\bb^2+\cdots+k_{i,n}\bb^n$ for some coefficients $k_{i,1},\ldots,k_{i,n}\in R$.  Note that
\[
\sum_{\substack{\bb^j \in \cB\\\deg {\bb^i}' < \deg \bb^j}} k_{i,j}\bb^j = 0,
\]
so we may assume $k_{i,j}=0$ for all $\deg {\bb^i}' < \deg \bb^j$.
It follows that \[\{{\bb^i}'\in\cB' \mid \deg {\bb^i}' \leq r\}\subseteq \text{span}(\{\bb^i\in\cB \mid \deg \bb^i \leq r\}).\]  
A symmetric argument shows that
\[\{\bb^i\in\cB \mid \deg \bb^i \leq r\}\subseteq \text{span}(\{{\bb^i}'\in\cB' \mid \deg {\bb^i}' \leq r\}).\] 
This contradicts minimality of $\cB$ or $\cB'$ unless $\sum_{i=0}^r d_i = \sum_{i=0}^{r} d_i'$.  By the inductive hypothesis, this implies $d_r = d_r'$.
\end{proof}

\subsection{Linear algebraic background} \label{sec:linalg}

We use the following two linear-algebraic results about the $\Bbbk$-vector space of polynomials.

\begin{lemma}\label{lem:linearindependence}
Let $a,b,c,d,D\in \Bbbk$, with $a, b, c$ distinct. Then we can find unique $A,B,C\in \Bbbk$ such that
\begin{equation}\label{eq:linalg1}
A\sfa + B\sfb + C\sfc = D\sfd.
\end{equation}
\end{lemma}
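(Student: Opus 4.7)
The plan is to reduce the equation to a linear system in $(A,B,C)$ by expanding both sides in the $\Bbbk$-basis $\{x^2, xy, y^2\}$ of the space of homogeneous degree-two polynomials in $\Bbbk[x,y]$. Writing out
\[
\sfa = x^2 + 2axy + a^2 y^2, \quad \sfb = x^2 + 2bxy + b^2 y^2, \quad \sfc = x^2 + 2cxy + c^2 y^2,
\]
and similarly for $\sfd$, and matching coefficients of $x^2$, $xy$, and $y^2$ on the two sides of \eqref{eq:linalg1}, I would obtain the system
\[
A + B + C = D, \qquad aA + bB + cC = dD, \qquad a^2 A + b^2 B + c^2 C = d^2 D.
\]
(The middle equation comes from dividing the coefficient of $xy$ by $2$, which presumes $\operatorname{char}\Bbbk \neq 2$; I would either note that hypothesis explicitly or observe that the convention throughout is $\Bbbk = \mathbb{C}$.)

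Next, I would recognize the coefficient matrix of this system as the $3 \times 3$ Vandermonde matrix
\[
V = \begin{pmatrix} 1 & 1 & 1 \\ a & b & c \\ a^2 & b^2 & c^2 \end{pmatrix},
\]
whose determinant is the classical Vandermonde product $(b-a)(c-a)(c-b)$. Since $a, b, c$ are assumed distinct, this determinant is a nonzero element of $\Bbbk$, so $V$ is invertible over $\Bbbk$.

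The conclusion then follows immediately: the system $V (A,B,C)^T = (D, dD, d^2 D)^T$ has a unique solution $(A,B,C) \in \Bbbk^3$, which is exactly the statement of the lemma. There is no real obstacle here; the only point requiring care is the expansion step and the observation that the resulting matrix is Vandermonde, after which the uniqueness and existence both follow from its invertibility.
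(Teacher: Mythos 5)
Your proof is correct and takes essentially the same route as the paper: expand everything in the monomial basis $\{x^2,xy,y^2\}$, reduce \eqref{eq:linalg1} to a $3\times 3$ linear system in $(A,B,C)$, and conclude by invertibility of the coefficient matrix, which holds because $a,b,c$ are distinct. The only cosmetic difference is that you divide out the $2$ and invoke the Vandermonde determinant, whereas the paper keeps the factor of $2$ in the middle row and argues by row reduction; your explicit caveat about $\operatorname{char}\Bbbk \neq 2$ is a point the paper leaves implicit.
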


\begin{proof}
We rewrite Equation~\eqref{eq:linalg1}, collecting coefficients, as 
\[
(A+B+C)x^2+(2aA+2bB+2cC)xy+(a^2A+b^2C+c^2C)y^2=Dx^2+2Ddxy+Dd^2y^2.
\]

Solving for $A,B,C$ amounts to solving the following system of linear equations:
\begin{align*}
A+B+C&=D\\
2aA+2bB+2cC&=2dD\\
a^2A+b^2B+c^2C&=d^2D.
\end{align*}
The coefficient matrix
\[
\begin{bmatrix}
1 & 1 & 1\\
2a & 2b & 2c\\
a^2 & b^2 & c^2
\end{bmatrix}
\]
can be reduced to the identity matrix via elementary row operations. (Some of the row operations require division, but we avoid division by zero because $a, b, c$ are distinct.) This implies that the coefficient matrix is invertible; thus, there exists a unique solution to the system of equations.
\end{proof}

\begin{lemma}\label{lem:degree3polybasis}
Let $a,b,c,C_1,C_2\in \Bbbk$, with $a$ and $b$ distinct.  
There exist unique $A_1,A_2,B_1,B_2\in \Bbbk$ such that
\begin{equation}\label{eq:linalg2}
(A_1x + A_2y)\sfa + (B_1x + B_2y)\sfb = (C_1x + C_2y)\sfc.
\end{equation}
\end{lemma}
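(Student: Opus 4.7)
The plan is to recognize Equation~\eqref{eq:linalg2} as asking for the preimage of $(C_1 x + C_2 y)\sfc$ under a linear map between two $\Bbbk$-vector spaces of equal dimension, and then show that the map is an isomorphism. Throughout, let $\Bbbk[x,y]_d$ denote the space of homogeneous polynomials of degree $d$, which has dimension $d+1$.

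Consider the $\Bbbk$-linear map
\[
T \co \Bbbk[x,y]_1 \oplus \Bbbk[x,y]_1 \longrightarrow \Bbbk[x,y]_3, \qquad T(f,g) = f\sfa + g\sfb.
\]
The domain has dimension $4$ and the codomain has dimension $4$. The pair $(A_1 x + A_2 y, B_1 x + B_2 y)$ solving Equation~\eqref{eq:linalg2} is precisely a preimage of $(C_1 x + C_2 y)\sfc$ under $T$, so it suffices to show that $T$ is a $\Bbbk$-linear isomorphism; uniqueness of $A_1, A_2, B_1, B_2$ then follows from injectivity.

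Since domain and codomain have the same finite dimension, I need only check that $T$ is injective. Suppose $T(f,g)=0$, i.e.\ $f\sfa = -g\sfb$ in $\Bbbk[x,y]$. The polynomials $\sfa = (x+ay)^2$ and $\sfb=(x+by)^2$ are powers of non-associate irreducible elements of the UFD $\Bbbk[x,y]$, because $a \ne b$ forces $(x+ay)$ and $(x+by)$ to be non-proportional linear forms. Hence $\gcd(\sfa,\sfb)=1$, which forces $\sfa \mid g$. But $\deg g \leq 1 < 2 = \deg \sfa$, so $g=0$, and then $f\sfa = 0$ gives $f=0$. Thus $\ker T = 0$, so $T$ is an isomorphism and the unique preimage of $(C_1 x + C_2 y)\sfc$ gives the desired $A_1, A_2, B_1, B_2$.

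The main step — and essentially the only non-routine point — is the coprimality argument, which replaces a direct $4\times 4$ determinant computation by a clean UFD argument. Notice that the hypothesis that $a \ne b$ is used exactly here, while no hypothesis on $c$ is needed; the value of $c$ only affects which element of $\Bbbk[x,y]_3$ we are hitting. A determinant calculation would also work but would obscure this structural reason, and would be considerably messier to present.
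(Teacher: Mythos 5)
Your proof is correct, and it takes a genuinely different route from the paper's. The paper expands Equation~\eqref{eq:linalg2}, equates coefficients of $x^3, x^2y, xy^2, y^3$, and argues that the resulting $4\times 4$ coefficient matrix (essentially the Sylvester matrix of $\sfa$ and $\sfb$) row-reduces to the identity when $a\neq b$ --- that is, it verifies invertibility by an explicit, though unwritten, elimination. You instead package the same linear map coordinate-freely as $T(f,g)=f\sfa+g\sfb$ from $\Bbbk[x,y]_1\oplus\Bbbk[x,y]_1$ to $\Bbbk[x,y]_3$ and prove injectivity structurally: since $a\neq b$, the linear forms $x+ay$ and $x+by$ are non-associate irreducibles in the UFD $\Bbbk[x,y]$, so $f\sfa=-g\sfb$ forces $\sfa\mid g$, which is impossible for degree reasons unless $g=0$ (and then $f=0$); the dimension count $4=4$ upgrades injectivity to bijectivity. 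Your version makes transparent exactly where $a\neq b$ is used and why no hypothesis on $c$ is needed, and it generalizes immediately to higher powers $(x+ay)^r$, $(x+by)^r$ with cofactors of degree $r-1$. The paper's version has the modest advantage of exhibiting the explicit linear system one would solve to compute the polynomials $f_1,f_2,g_1,g_2$ that are used concretely in Lemma~\ref{lem:trianglecase} and Lemma~\ref{lem:reduced}. Both arguments are complete.
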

\begin{proof}
Expanding Equation~\eqref{eq:linalg2} and equating coefficients of like terms leads to the following system of linear equations:
\begin{align*}
A_1+B_1&=C_1\\
2aA_1+A_2+2bB_1+B_2&=2cC_1+C_2\\
a^2A_1+2aA_2+b^2B_1+2bB_2&=c^2C_1+2cC_2\\
a^2A_2+b^2B_2&=c^2C_2.
\end{align*}
The coefficient matrix
\[
\begin{bmatrix}
1 & 0 & 1 & 0\\
2a & 1 & 2b & 1\\
a^2 & 2a & b^2 & 2b\\
0 & a^2 & 0 & b^2
\end{bmatrix}
\]
can be reduced to the identity matrix via elementary row operations. (Some of the row operations require division, but we avoid division by zero because $a\neq b$.) This implies that the coefficient matrix is invertible; thus, there exists a unique solution to the system of equations.
\end{proof}

\subsection{Constructions to reduce graphs}
\label{sec:reductionlemmas}

A product module has a collection of forgetful maps to different factors in the module.  Suppose $(G', \alpha')$ is an edge-labeled graph obtained from another edge-labeled graph $(G, \alpha)$ by adding a single vertex $v$ together with some labeled edges from $v$ to vertices in $G$.  Then we can use the forgetful map to relate the splines on $(G',\alpha')$ to those on $(G,\alpha)$.  

This is what we do in the next result.  We then specialize to the case of cycles in Corollary~\ref{cor:repeatedge}.  We note that Lemma~\ref{lem:splinereducing} below also applies to general edge-labelings $\alpha$, which we will restrict in more ways throughout this section.

\begin{lemma}\label{lem:splinereducing}
Suppose that $(G,\alpha)$ and $(G',\alpha')$ are  edge-labeled graphs with vertices $V'=V \cup \{v\}$, edges 
\[E'=E \cup \{vu \mid u\in U\}\] 
where $U\subseteq V$ is nonempty, and edge-labeling $\alpha'|_E = \alpha$.  Then the projection map $\bigoplus_{u\in V'} R\rightarrow \bigoplus_{u\in V} R$ induces an $R$-module homomorphism $\varphi\co R_{G',\alpha'} \ra R_{G,\alpha}$, and
\[R_{G',\alpha'} \cong \ker \varphi \oplus \im \varphi.\]

Moreover, suppose that every pair $u,u'\in U$ is connected by a path of edges in $E$ all labeled $I$, and suppose that $\alpha'(vu)=I$ for all $u\in U$. Then
\[R_{G',\alpha'} \cong I \oplus R_{G,\alpha}.\]
\end{lemma}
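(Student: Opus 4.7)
The plan is to exhibit $\varphi$ as the obvious forgetful projection, describe its kernel, and then construct an $R$-linear section that splits the associated short exact sequence $0 \to \ker \varphi \to R_{G',\alpha'} \xrightarrow{\varphi} \im \varphi \to 0$. First, I would verify that restricting a spline $\bp \in R_{G',\alpha'}$ to its $V$-coordinates produces a spline in $R_{G,\alpha}$: the GKM conditions on edges of $E$ transfer directly because $\alpha'|_E = \alpha$, and the $R$-module structures on both sides are defined coordinatewise, so $\varphi$ is an $R$-module homomorphism.

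Next, I would identify the kernel. A spline $\bp \in \ker \varphi$ has $\bp_u = 0$ for all $u \in V$, reducing the GKM condition on each new edge $vu$ to $\bp_v \in \alpha'(vu)$. Hence $\ker \varphi \cong \bigcap_{u \in U} \alpha'(vu)$ via $\bp \mapsto \bp_v$. For the direct sum decomposition in the first part, the remaining task is to construct an $R$-linear section $s \co \im \varphi \ra R_{G',\alpha'}$, equivalently an $R$-linear rule that extends each spline in $\im \varphi$ to a spline on $G'$; given such a section, standard homological algebra yields the splitting $R_{G',\alpha'} \cong \ker \varphi \oplus \im \varphi$.

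For the second part, an explicit section is readily written down: fix any $u_0 \in U$ and set $s(\bp) := (\bp, \bp_{u_0})$ for $\bp \in R_{G,\alpha}$. Its $R$-linearity is automatic from the definition. The nontrivial verification is that $s(\bp) \in R_{G',\alpha'}$, i.e.\ that $\bp_{u_0} - \bp_u \in I = \alpha'(vu)$ for every $u \in U$. This I would prove by telescoping the GKM relations along the $I$-labeled path in $G$ from $u$ to $u_0$ guaranteed by the hypothesis: each step contributes an element of $I$, and $I$ is closed under addition. This verification simultaneously shows $\im \varphi = R_{G,\alpha}$ (every spline on $G$ lifts) and supplies the $R$-linear section; together with the simplification $\bigcap_{u \in U} \alpha'(vu) = I$ in this setting, the split sequence collapses to $R_{G',\alpha'} \cong I \oplus R_{G,\alpha}$.

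The main obstacle is producing the $R$-linear section of $\varphi$: upgrading a short exact sequence of $R$-modules to an internal direct sum decomposition requires a splitting, and in general the existence of such a section is subtle (the image $\im \varphi$ need not be projective). The content of the second part is that the path hypothesis supplies a canonical choice---the value at a chosen vertex $u_0 \in U$---that simultaneously yields a valid spline extension and is $R$-linear in the input, neatly resolving this obstacle in the setting we actually need.
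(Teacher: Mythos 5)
Your treatment of the second isomorphism---the only part of this lemma that is actually used later in the paper (in Lemma~\ref{lem:repeatedgeMGS} and Corollary~\ref{cor:repeatedge})---is correct and coincides with the paper's own argument: restriction to $V$ preserves the GKM conditions because $\alpha'|_E=\alpha$; the kernel is identified with $I$ via the value at $v$; and a spline $\bq\in R_{G,\alpha}$ is lifted by assigning $v$ the value $\bq_{u'}$ for a fixed $u'\in U$, the GKM conditions at $v$ being verified exactly by your telescoping along the $I$-labeled paths between vertices of $U$. Nothing to add there.

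Where your write-up does not cover the statement is the first displayed isomorphism $R_{G',\alpha'}\cong\ker\varphi\oplus\im\varphi$, which the lemma asserts for an \emph{arbitrary} edge-labeling $\alpha'$: you correctly reduce it to producing an $R$-linear section of $\varphi$ onto its image, flag this as the main obstacle, and then construct a section only under the extra hypotheses of the ``Moreover'' clause. That is a genuine gap relative to the statement as written---but your caution is vindicated rather than refuted. The paper supplies no argument for this step either (its proof simply says ``we conclude that''), and the unrestricted claim is in fact false. For example, take $R=\mathbb{C}[x,y]$, let $G$ be the single edge $u_1u_2$ labeled $\langle 1\rangle$, and attach $v$ with $\alpha'(vu_1)=\langle x\rangle$ and $\alpha'(vu_2)=\langle y\rangle$. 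Writing splines as $(\bp_{u_1},\bp_{u_2},\bp_v)$, the module $R_{G',\alpha'}$ is free with basis $(1,1,1)$, $(x,0,0)$, $(0,y,0)$, while $\ker\varphi\cong\langle x\rangle\cap\langle y\rangle=\langle xy\rangle\cong R$ and $\im\varphi=\{(a_1,a_2):a_1-a_2\in\langle x,y\rangle\}\cong R\oplus\langle x,y\rangle$, which is not projective (a rank-one projective ideal of a UFD would be principal). Hence $\ker\varphi\oplus\im\varphi$ is not free and cannot be isomorphic to $R_{G',\alpha'}$. So the section you were looking for cannot exist in general; the first assertion should be read as only providing the short exact sequence, with the ``Moreover'' hypotheses being precisely what makes it split---which is exactly how you read the lemma.
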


\begin{proof}
We first show that restricting a spline $\bp$ in $R_{G',\alpha'}$ to the set of vertices $V$ produces a spline in $R_{G,\alpha}$.  Indeed, for each edge $uu' \in E$ we have
\[\bp_u - \bp_{u'} \in \alpha'(uu') = \alpha(uu').\]
Thus the projection map induces an $R$-module homomorphism $\varphi\co R_{G', \alpha'} \ra R_{G,\alpha}$.
We conclude that 
\[R_{G',\alpha'} \cong \ker \varphi \oplus \im \varphi.\]

Now we consider the special case where every pair $u,u'\in U$ is connected by a path of edges in $E$ all labeled $I$, and $\alpha'(vu)=I$ for all $u\in U$. Note that $\ker\varphi$ consists of all splines in $R_{G',\alpha'}$ that are zero at all of $V$.  Consider a spline in $\ker \varphi$.  If at least one edge incident to $v$ is labeled $I$, then the vertex $v$ must be labeled by an element of $I$ by the GKM condition; if all edges incident to $v$ are labeled $I$, then every element of $I$ works. Thus $\ker\varphi\cong I$. 

Given a spline $\bq \in R_{G,\alpha}$, we define $\bp\in R_{G',\alpha'}$ such that $\varphi(\bp) = \bq$ according to the rule $\bp_{u} = \bq_{u}$ for all $u \in V$ and $\bp_{v} = \bq_{u'}$ for some $u' \in U$. The GKM condition implies that $\bq_{u}-\bq_{u'}\in I$ for any $u\in  U$, since $u$ and $u'$ are connected by a path of edges labeled $I$ by hypothesis. Thus, we have $\bp_{u}-\bp_{v}=\bq_u-\bq_{u'}\in I$ for all $u\in U$. 
By inspection of the GKM conditions, we conclude $\bp \in R_{G',\alpha'}$ and thus $\im\varphi\cong R_{G,\alpha}$.
\end{proof}

We can (and will) use Lemma \ref{lem:splinereducing} to eliminate those vertices whose incident edges all have the same label.  This leads us to the following definition.

\begin{definition}\label{def:reduced}
An edge-labeled graph is called {\em reduced} if no two edges that are incident to the same vertex have the same edge-label.
\end{definition}

We note that the edge-labels in a reduced cycle have to be at least somewhat evenly distributed, in the following sense.

\begin{lemma}\label{lem:seqof3}
Suppose $(G,\alpha)$ is a reduced edge-labeled graph.  Moreover, suppose $(G,\alpha)$ contains $n$ distinct vertices $v_0, v_1, v_2, \ldots, v_n=v_0$ that form a cycle and assume that $|\alpha(E)| \geq 3$. Then there is at least one sequence of three successive distinct edge-labels on the edges $v_1v_2, v_2v_3,\ldots, v_{n-1}v_n,v_nv_1$. 
\end{lemma}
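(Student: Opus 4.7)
The plan is to argue by contradiction. Suppose that no three consecutive edges of the cycle carry three distinct labels. Because $(G,\alpha)$ is reduced, any two adjacent edges already have different labels; so for each triple of consecutive cycle-edges $e_i,e_{i+1},e_{i+2}$ the relations $\alpha(e_i)\neq\alpha(e_{i+1})$ and $\alpha(e_{i+1})\neq\alpha(e_{i+2})$ hold for free, and the only way the triple can fail to consist of three distinct labels is if $\alpha(e_i)=\alpha(e_{i+2})$.

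Next, I would apply this identity to all cyclic indices. Iterating $\alpha(e_i)=\alpha(e_{i+2})$ forces the even-indexed edges to all share a single label and the odd-indexed edges to all share a single label, so at most two distinct labels appear among the edges of the cycle.

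To finish, I would split on the parity of $n$. If $n$ is even, the even- and odd-indexed edge-classes remain disjoint all the way around the cycle, so the cycle uses exactly two edge-labels, contradicting the hypothesis that it has three or more distinct labels. If $n$ is odd, then going once around the cycle identifies the two parity classes: starting from $e_1$ and stepping by two we arrive at $e_{n+1}=e_1$ after passing through $e_2,e_4,\ldots,e_n=e_0,e_2,\ldots$, so in fact $\alpha(e_0)=\alpha(e_1)$. But $e_0$ and $e_1$ are incident to the common vertex $v_1$, which contradicts the reducedness of $(G,\alpha)$.

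There is no serious obstacle; the only point requiring care is the parity bookkeeping when propagating $\alpha(e_i)=\alpha(e_{i+2})$ cyclically, so that the odd-length case genuinely collapses the two parity classes and thereby violates reducedness rather than merely reducing to two labels.
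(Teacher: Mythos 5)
Your proof is correct, and it rests on the same key observation as the paper's: reducedness makes consecutive edge-labels distinct, so a triple of successive edges fails to have three distinct labels only when $\alpha(e_i)=\alpha(e_{i+2})$, i.e., the labels are forced to alternate between two values. Where you differ is in how the contradiction is closed. You propagate the relation $\alpha(e_i)=\alpha(e_{i+2})$ all the way around the cycle, which collapses the labels into (at most) two parity classes and then requires the even/odd case split: even length contradicts the hypothesis of three or more labels, while odd length merges the two classes and contradicts reducedness. The paper instead stops the alternation at the first occurrence of a label outside $\{I,J\}$ (which exists by hypothesis) and observes that the two edges immediately preceding it are labeled $I$ and $J$ without repetition, directly exhibiting the desired triple; this sidesteps the parity bookkeeping entirely. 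Both arguments are complete; yours trades a slightly longer global analysis for not having to single out a particular edge, and your odd-length case is a nice (if redundant) extra contradiction, since a single label on all edges already violates the three-label hypothesis.
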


\begin{proof}
Read clockwise around the cycle starting at an arbitrary edge, and suppose the first two edges are labeled $I$ and $J$.  If a sequence of three successive edge-labels does not contain three distinct edge-labels, then it must alternate between two of them since the graph is reduced.  An edge-label that is neither $I$ nor $J$ appears somewhere on the graph by hypothesis of at least three distinct edge-labels.  Look at the first occurrence of this edge-label in the sequence; the two edges preceding it have labels from the set $\{I, J\}$, without repetition.  This proves the claim.
\end{proof}

In the next lemma, we refine Lemma~\ref{lem:splinereducing} to keep track of MGSs. While we use this lemma to prove results about polynomial edge-labelings, the same proof works for edge-labelings over UFDs as long as each edge is labeled by a principal ideal.

\begin{lemma}\label{lem:repeatedgeMGS}
Let $(G,\alpha)$ and $(G',\alpha')$ be defined as in Lemma~\ref{lem:splinereducing}, with the condition that every pair $u,u'\in U$ is connected by a path of edges in $E$ all labeled $\lang\si\rang$ and $\alpha'(vu)=\lang\si\rang$ for all $u\in U$. Let $|V|=n$ and fix some $u'\in U$. If $\cB = \{\bb^{1}, \bb^{2}, \ldots, \bb^{n}\}$ is an MGS for $R_{G,\alpha}$, then $\cB' = \{{\bb^{1}}',{\bb^{2}}',\ldots,{\bb^{n}}',\bb^{n+1}\}$,  where
\[
{\bb^{i}_{u}}' = \left\{\begin{array}{ll}\bb^{i}_{u} & \text{if } u\in V,\\\bb^{i}_{u'} & \text{if } u = v,\end{array}\right.
\]
and
\[
\bb^{n+1}_{u} = \left\{\begin{array}{ll}0 & \text{if } u\in V,\\ \si & \text{if } u=v,\end{array}\right.
\]
is an MGS for $R_{G',\alpha'}$.
\end{lemma}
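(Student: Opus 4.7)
The plan is to use Lemma~\ref{lem:splinereducing}, which already establishes the abstract decomposition $R_{G',\alpha'} \cong \lang\si\rang \oplus R_{G,\alpha}$. The elements ${\bb^i}'$ are designed to be lifts into $\im\varphi$ of the given MGS $\cB$ of $R_{G,\alpha}$, while $\bb^{n+1}$ is a natural generator for $\ker\varphi \cong \lang \si\rang$. Together with Lemma~\ref{lem:folk}, this should immediately produce an MGS of the expected size $n+1 = |V'|$.

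First, I would verify that each element of $\cB'$ is actually a spline on $(G',\alpha')$. For edges in $E$, the spline condition for ${\bb^i}'$ is inherited from $\bb^i \in R_{G,\alpha}$, since ${\bb^i}'$ restricts to $\bb^i$ on $V$. For a new edge $vu$ with $u \in U$, we compute ${\bb^i}'_v - {\bb^i}'_u = \bb^i_{u'} - \bb^i_u$, and since $u, u' \in U$ are connected in $G$ by a path of edges all labeled $\lang\si\rang$, the telescoping sum of GKM conditions for $\bb^i$ along this path forces $\bb^i_{u'} - \bb^i_u \in \lang\si\rang = \alpha'(vu)$. The spline $\bb^{n+1}$ trivially satisfies GKM: on old edges both endpoints are $0$, and on each new edge $vu$ the difference is $\pm\si \in \lang \si\rang$.

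Next, I would show that $\cB'$ generates $R_{G',\alpha'}$. Given any $\bp \in R_{G',\alpha'}$, set $\bq := \varphi(\bp) \in R_{G,\alpha}$ and write $\bq = \sum_{i=1}^n r_i \bb^i$ using that $\cB$ generates $R_{G,\alpha}$. Applying $\varphi$ to $\bp - \sum_i r_i {\bb^i}'$ gives zero, so this difference is supported only at $v$. Evaluating at $v$ yields $\bp_v - \sum_i r_i \bb^i_{u'} = \bp_v - \bp_{u'}$, and the GKM condition on the edge $vu' \in E'$ (which exists because $u' \in U$) shows $\bp_v - \bp_{u'} = s\si$ for some $s \in R$. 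Hence $\bp = \sum_i r_i {\bb^i}' + s \bb^{n+1}$, and $\cB'$ is a generating set of size $n+1$.

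Finally, since $R$ is a UFD and hence an integral domain, Lemma~\ref{lem:folk} asserts that every MGS of $R_{G',\alpha'}$ has at least $|V'| = n+1$ elements. Because $\cB'$ is a generating set with exactly $n+1$ elements, it is an MGS. I do not anticipate a serious obstacle here: the only place requiring a bit of care is the GKM verification for the new edges in step one, where one must invoke the hypothesis that any two elements of $U$ are joined by an $\lang\si\rang$-labeled path in $G$ in order to guarantee that the lifts ${\bb^i}'$ are actually splines on $(G',\alpha')$.
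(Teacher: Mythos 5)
Your proposal is correct and follows essentially the same route as the paper's proof: lift the generators of $\cB$ via the construction from Lemma~\ref{lem:splinereducing}, add the indicator spline $\bb^{n+1}$ at $v$, show generation by splitting off $\varphi(\bp)$ and using the GKM condition on the edge $vu'$, and conclude minimality from Lemma~\ref{lem:folk}. The only cosmetic difference is that you verify the GKM condition for the lifts ${\bb^i}'$ directly by telescoping along the $\lang\si\rang$-labeled paths, whereas the paper simply cites the corresponding verification already done in the proof of Lemma~\ref{lem:splinereducing}.
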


\begin{proof}
It is clear that $\bb^{n+1}$ is a spline in $R_{G',\alpha'}$.  If $\varphi$ is the map from Lemma~\ref{lem:splinereducing}, then $\varphi({\bb^i}')=\bb^i$ for all $1\leq i\leq n$ by construction, so (the last paragraph of the proof of) Lemma~\ref{lem:splinereducing} implies that ${\bb^i}'\in R_{G',\alpha'}$ again for all $1 \leq i \leq n$.  

We now show that $\cB'$ is a generating set for $R_{G',\alpha'}$.  Each spline $\bp \in R_{G',\alpha'}$ satisfies 
\begin{equation}\label{eqn:lastcoordinate}
\bp_{v} = \bp_{u'} + k\si
\end{equation}
for some $k \in R$ by the GKM condition for the edge $vu'$. The spline $\varphi(\bp)$ is an element of $R_{G,\alpha}$ with
\begin{equation}\label{eqn:samespline}
\varphi(\bp)_{u} = \bp_{u} \qquad \text{for all } u\in V.
\end{equation}
Because $\cB$ is an MGS for $R_{G,\alpha}$, we can write $\varphi(\bp)$ as a linear combination
\begin{equation}\label{eqn:lincombop}
\varphi(\bp) = r_1\bb^{1} + r_2 \bb^{2} + \cdots + r_n \bb^{n},
\end{equation}
where each $r_i \in R$.  For all $u\in V$, we have
\begin{align*}
\bp_{u} &= \varphi(\bp)_{u} & \text{by Equation } \eqref{eqn:samespline},\\
&= r_1\bb^{1}_{u} + r_2\bb^{2}_{u} + \cdots + r_n\bb^{n}_{u} + k\cdot 0 & \text{by Equation } \eqref{eqn:lincombop},\\
&= r_1{\bb^{1}_{u}}' + r_2{\bb^{2}_{u}}' + \cdots + r_n{\bb^{n}_{u}}' + k\bb^{{n+1}}_{u} & \text{by the definition of } \cB'.
\end{align*}
Furthermore,
\begin{align*}
\bp_{v} &= \bp_{u'} + k\si & \text{by Equation } \eqref{eqn:lastcoordinate},\\
&= r_1\bb^{1}_{u'} + r_2\bb^{2}_{u'} + \cdots + r_n\bb^{n}_{u'} + k\si & \text{by the previous argument},\\
&= r_1{\bb^{1}_{v}}' + r_2{\bb^{2}_{v}}' + \cdots + r_n{\bb^{n}_{v}}' + k\bb^{{n+1}}_{v} & \text{by the definition of } \cB'.
\end{align*}
We have obtained the equation
\[
\bp = r_1{\bb^{1}}' + r_2{\bb^{2}}' + \cdots + r_n{\bb^{n}}' + k\bb^{n+1},
\]
which assures that $\cB'$ is a generating set for $R_{G',\alpha'}$.

Moreover, the set $\cB'$ is an MGS by Lemma~\ref{lem:folk} because it consists of $n+1$ elements and  $G'$ is a graph with $n+1$ vertices.
\end{proof}

We now apply the ideas in the previous lemma to the case of cycles, which is the special case on which we focus.

\begin{corollary}\label{cor:repeatedge}
Let $(C_n,\alpha_n)$ be an edge-labeled $n$-cycle. Create an edge-labeled $(n+1)$-cycle $(C_{n+1},\alpha_{n+1})$ from $(C_n,\alpha_n)$ by inserting a vertex $v_{n+1}$ into the edge $v_n v_1$ with both new edges $v_nv_{n+1}$ and $v_{n+1}v_1$ labeled the same as $v_nv_1$ was.  Then 
\[R_{C_{n+1}, \alpha_{n+1}} \cong \alpha_{n}(v_nv_1) \oplus R_{C_n, \alpha_n}.\]
Moreover, suppose $(C_n,\alpha_n)$ has edges labeled with principal ideals generated by homogeneous polynomials, that $(C_n,\alpha_n)$ has MGS $\cB$,
and that the generator of the edge-label $\alpha_n(v_nv_1)$ is a homogeneous polynomial of degree $e$.  Then
$(C_{n+1},\alpha_{n+1})$ has an MGS $\cB'$ that 
\begin{enumerate}
    \item extends $\cB$ in the sense that if $\varphi$ is the map from Lemma~\ref{lem:splinereducing} then $\varphi(\cB') \supseteq \cB$,
    \item has exactly one more generator than $\cB$ and the degree of this additional generator is $e$, and
    \item is minimal in the sense that if $\cB''$ is any other generating set that extends $\cB$ then $\cB''$ has at least one more element of degree $e$ than $\cB$ (and possibly other additional elements of other degrees).
\end{enumerate}
In particular, the degree sequence of $\cB'$ satisfies
\[\overline{d}_{\cB'} = \overline{d}_{\cB}+\left(0^{e-1},1,0^{m-e}\right). \]
\end{corollary}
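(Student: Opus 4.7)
My plan is to separate the statement into two pieces: the module isomorphism, and the construction of the MGS $\cB'$ together with its three properties. Both rest on the same enlargement trick, which reduces the corollary to Lemmas~\ref{lem:splinereducing} and~\ref{lem:repeatedgeMGS}. Let $G^\ast$ be the graph obtained from $C_n$ by adjoining the new vertex $v_{n+1}$ together with the two edges $v_n v_{n+1}$ and $v_{n+1} v_1$, both labeled $\alpha_n(v_n v_1) = \lang\si\rang$. The GKM condition on the old edge $v_n v_1$ (still present in $G^\ast$) telescopes from the two new GKM conditions, since all three edges carry the same label; hence a vertex-labeling is a spline on $G^\ast$ if and only if it is a spline on $C_{n+1}$, so $R_{G^\ast,\alpha^\ast} = R_{C_{n+1},\alpha_{n+1}}$.

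With this identification in hand, Lemma~\ref{lem:splinereducing} applied to $(C_n,\alpha_n)$ and $(G^\ast,\alpha^\ast)$ with $U = \{v_n, v_1\}$ yields the first isomorphism, because $v_n$ and $v_1$ are joined in $C_n$ by the edge $v_n v_1$ itself, which is labeled $\lang\si\rang$. Lemma~\ref{lem:repeatedgeMGS} in the same setup (with the choice $u' = v_1$) produces the MGS $\cB' = \{{\bb^1}', \ldots, {\bb^n}', \bb^{n+1}\}$, where each ${\bb^i}'$ extends $\bb^i$ by setting its $v_{n+1}$-coordinate to $\bb^i_{v_1}$, and $\bb^{n+1}$ is the indicator spline supported only at $v_{n+1}$ with value $\si$. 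Property~(1) is immediate from $\varphi({\bb^i}') = \bb^i$; property~(2) records that the sole new generator $\bb^{n+1}$ is homogeneous of degree $e$; and the degree-sequence formula follows because each ${\bb^i}'$ is homogeneous of the same degree as $\bb^i$ while $\bb^{n+1}$ contributes exactly one additional generator in degree $e$.

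Property~(3), the minimality claim, is the main subtlety. Given any homogeneous generating set $\cB''$ extending $\cB$, pick for each $\bb^i \in \cB$ a lift $\bq^i \in \cB''$ with $\varphi(\bq^i) = \bb^i$; homogeneity forces $\deg \bq^i = \deg \bb^i$. The submodule $N = \lang \bq^1, \ldots, \bq^n \rang$ cannot equal $R_{C_{n+1},\alpha_{n+1}}$, for then $\{\bq^i\}$ would be a generating set of size $n$ for splines on a connected graph with $n+1$ vertices, contradicting Lemma~\ref{lem:folk}. Under the direct-sum decomposition, the quotient $R_{C_{n+1},\alpha_{n+1}}/N$ is a nonzero graded quotient of $\lang\si\rang$, whose minimal homogeneous generator lies in degree $e$; by a graded Nakayama argument, any homogeneous generating set of this quotient must contain an element of degree $e$, so the ``extra'' elements of $\cB''$ include at least one in degree $e$. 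Equivalently, one may invoke the degree-sequence invariance (Proposition~\ref{prop:degseqinvt}) together with the explicit MGS $\cB'$ to read off the same count. The main obstacle I anticipate is making this last Nakayama step clean, since $\cB''$ is allowed to be non-minimal and its elements need not split neatly across the direct sum; once that bookkeeping is settled, the degree-sequence formula is an immediate consequence.
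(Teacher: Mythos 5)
Your proposal is correct and follows essentially the same route as the paper: identify $R_{C_{n+1},\alpha_{n+1}}$ with the splines on the augmented graph containing all three identically-labeled edges, then apply Lemma~\ref{lem:splinereducing} (with $U=\{v_n,v_1\}$) for the direct-sum decomposition and Lemma~\ref{lem:repeatedgeMGS} for the explicit MGS. Your graded-Nakayama justification of the minimality claim (3) is a more careful elaboration of the paper's one-line appeal to the direct-sum decomposition, but it is the same underlying argument, not a different approach.
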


\begin{proof}
Taking $(G,\alpha)$ to be $(C_n,\alpha_{n})$ and $(G',\alpha')$ to be the (non-cyclic) edge-labeled graph formed from $(C_n,\alpha_n)$ by adding a new vertex $v_{n+1}$ and new edges $v_nv_{n+1}$ and $v_{n+1}v_1$ labeled the same as $v_nv_1$, we can apply Lemma~\ref{lem:splinereducing} to conclude
\[R_{G',\alpha'} \cong \alpha_{n}(v_nv_1) \oplus R_{C_n, \alpha_n}.\]
Note that $R_{G',\alpha'} \subseteq R_{C_{n+1},\alpha_{n+1}}$ because $(G',\alpha')$ consists of the edge-labeled graph $(C_{n+1},\alpha_{n+1})$ together with precisely one additional edge.  The three edges $v_nv_1$, $v_nv_{n+1}$, and $v_{n+1}v_1$ in $(G',\alpha')$ all have the same label, so every spline in $R_{C_{n+1},\alpha_{n+1}}$ satisfies the GKM conditions on $(G',\alpha')$. Thus $R_{G',\alpha'} \cong R_{C_{n+1},\alpha_{n+1}}$.  In particular, if $\alpha_{n}(v_nv_1)$ is a principal ideal generated by a homogeneous polynomial of degree $e$ and if $\cB$ is an MGS of $(C_n, \alpha_n)$, then  Lemma~\ref{lem:repeatedgeMGS} constructs an MGS for $(C_{n+1}, \alpha_{n+1})$ that satisfies Conditions (1) and (2) of our claim.  The explicit description of the degree sequence $\overline{d}_{\cB'}$ of $(C_{n+1},\alpha_{n+1})$ follows from the definition of degree sequence and from Conditions (1) and (2).  

The minimality in Condition (3) results from the direct sum decomposition 
\[R_{C_{n+1}, \alpha_{n+1}} \cong \alpha_{n}(v_nv_1) \oplus R_{C_n, \alpha_n}\]
as follows.  Suppose $\cB'$ generates $R_{C_{n+1}, \alpha_{n+1}}$ and extends $\cB$.  Lemma~\ref{lem:splinereducing} identifies each spline in the image of $\varphi$ with an element of the subring $0 \oplus R_{C_{n,\alpha_{n}}}$ and each spline in $\ker \varphi$ with an element of $\alpha_n(v_nv_1) \oplus 0$.  The set $\cB \subseteq \varphi(\cB')$ is contained in $0 \oplus R_{C_{n,\alpha_{n}}}$ so $\cB'$ contains at least one element $\bb \in \cB' \cap \ker \varphi$ to generate the first summand $\alpha_n(v_nv_1) \oplus 0$.  This shows $|\cB'| > |\cB|$.

The spline $\bb$ satisfies $\bb_{v_i}=0$ for all $i \neq n+1$ by definition of $\ker \varphi$ and is identified with $(\bb_{v_{n+1}},0) \in \alpha_n(v_n,v_1) \oplus 0$ in the direct sum decomposition. By hypothesis, the minimal generator $p \in \alpha_n(v_nv_1)$ is a homogeneous polynomial of degree $e$.  Since $\bb_{v_{n+1}}$ is divisible by $p$ we conclude $\bb$ has degree at least $e$ as desired.
      \end{proof}

\subsection{Producing an MGS for polynomial edge-labeled cycles}
\label{sec:algpoly}

We now construct an algorithm that produces a homogeneous MGS for cycles whose edges are labeled by principal polynomial ideals with generator of the form $\sfa := (x+ay)^2$ for $a \neq 0$.  Part of our proof proceeds by induction; the following lemma proves the base case of a triangle.

\begin{lemma}\label{lem:trianglecase}
Let $(G,\alpha)$ be a $3$-cycle with edge-labeling $\alpha\co E\ra \cI$ having $\alpha(v_1v_2)=\lang \sfa\rang$, $\alpha(v_2v_3)=\lang \sfb\rang$, and $\alpha(v_3v_1)=\lang \sfc\rang$ so that $a, b, c$ are all distinct. Let $f_1,f_2,g_1,g_2\in \Bbbk[x,y]$ denote the homogeneous degree-one polynomials with $x\sfa=f_1\sfb+g_1\sfc$ and $y\sfa=f_2\sfb+g_2\sfc$, whose existence is guaranteed by Lemma \ref{lem:degree3polybasis}.  Then the set
\[
\cB=\{\bid,\bb^{2},\bb^{3}\}=\{\bid, (0,x\sfa,g_1\sfc),(0,y\sfa,g_2\sfc)\}
\] 
is a homogeneous basis for $R_{G,\alpha}$.
\end{lemma}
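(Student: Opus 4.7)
The plan is to verify $\cB = \{\bid, \bb^{2}, \bb^{3}\}$ is a homogeneous basis in three stages: (1) check that $\bb^{2}, \bb^{3} \in R_{G,\alpha}$; (2) show $\cB$ spans $R_{G,\alpha}$ over $R$; and (3) show $\cB$ is $R$-linearly independent. Homogeneity is immediate since $\bid$ has degree $0$ while each nonzero entry of $\bb^{2}$ and $\bb^{3}$ has degree $3$. For (1), a direct check of each edge suffices; the only nontrivial case for $\bb^{2}$ is the edge $v_2v_3$, where $\bb^{2}_{v_2} - \bb^{2}_{v_3} = x\sfa - g_1\sfc = f_1\sfb \in \lang\sfb\rang$ by the defining relation, and symmetrically for $\bb^{3}$.

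The crux of steps (2) and (3) is a Wronskian-style identity. Forming $y(x\sfa) - x(y\sfa) = 0$ and substituting the defining relations gives $(yf_1 - xf_2)\sfb + (yg_1 - xg_2)\sfc = 0$, so $\sfb \mid (yg_1 - xg_2)\sfc$. Since $\gcd(\sfb,\sfc)=1$ (as $b\ne c$) and $yg_1 - xg_2$ has degree $2$, I conclude $yg_1 - xg_2 = \mu\sfb$ for some $\mu \in \Bbbk$. To see $\mu \ne 0$: if $\mu = 0$ then $yg_1 = xg_2$ forces $g_1 = \nu x$ and $g_2 = \nu y$ for a common $\nu \in \Bbbk$, so $x\sfa = f_1\sfb + \nu x\sfc$; using $\gcd(x,\sfb) = 1$ (since $b \ne 0$), this yields $\sfa - \nu\sfc = \rho\sfb$ for some $\rho \in \Bbbk$, contradicting the linear independence of $\sfa, \sfb, \sfc$ guaranteed by Lemma~\ref{lem:linearindependence}.

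For (2), given any spline $\bp = (p_1, p_2, p_3)$, the GKM conditions produce $q, s, t \in R$ with $p_2 - p_1 = q\sfa$, $p_3 - p_1 = s\sfc$, and $q\sfa - s\sfc = t\sfb$. I set $r_1 = p_1$ and seek $r_2, r_3 \in R$ satisfying $r_2 x + r_3 y = q$ and $r_2 g_1 + r_3 g_2 = s$. Cramer's rule gives $r_2 = (qg_2 - sy)/(xg_2 - yg_1)$ with denominator $-\mu\sfb$; multiplying the numerator through by $\sfc$ and using $g_2\sfc = y\sfa - f_2\sfb$,
\[
(qg_2 - sy)\sfc = q(y\sfa - f_2\sfb) - sy\sfc = y(q\sfa - s\sfc) - qf_2\sfb = (yt - qf_2)\sfb,
\]
so coprimality of $\sfb, \sfc$ gives $\sfb \mid qg_2 - sy$, whence $r_2 \in R$. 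A symmetric calculation places $r_3 \in R$, giving $\bp = r_1\bid + r_2\bb^{2} + r_3\bb^{3}$.

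For (3), suppose $r_1\bid + r_2\bb^{2} + r_3\bb^{3} = 0$. Evaluation at $v_1$ forces $r_1 = 0$; since $R$ is a domain, the $v_2$- and $v_3$-coordinates reduce to $r_2 x + r_3 y = 0$ and $r_2 g_1 + r_3 g_2 = 0$. The first, together with $\gcd(x,y) = 1$, forces $r_2 = -uy$ and $r_3 = ux$ for some $u \in R$; substituting into the second gives $u(xg_2 - yg_1) = -u\mu\sfb = 0$, and $\mu \ne 0$ forces $u = 0$. The main obstacle throughout is establishing that $\mu \neq 0$; once in hand, everything else is formal.
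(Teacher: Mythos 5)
Your proof is correct, and it shares the paper's central insight: the identity $(yf_1-xf_2)\sfb+(yg_1-xg_2)\sfc=0$ obtained from $y(x\sfa)-x(y\sfa)=0$, together with coprimality of $\sfb$ and $\sfc$ and the linear independence of $\sfa,\sfb,\sfc$ from Lemma~\ref{lem:linearindependence}, to conclude $yg_1-xg_2=\mu\sfb$ with $\mu\neq 0$. (The paper packages this as the statement that $\bq=y\bb^{2}-x\bb^{3}=(0,0,(yg_1-xg_2)\sfc)$ is a nonzero scalar multiple of $(0,0,\sfb\sfc)$.) Where you diverge is in the spanning step. The paper first shows the coefficient $k$ in $\bp_{v_2}-\bp_{v_1}=k\sfa$ has no constant term, writes $k=h_1x+h_2y$, subtracts $h_1\bb^{2}+h_2\bb^{3}$, and then absorbs the leftover $(0,0,\ast)$ into the auxiliary spline $\bq$; implicitly this produces $r_2=h_1+ty$, $r_3=h_2-tx$. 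You instead solve the $2\times 2$ system $r_2x+r_3y=q$, $r_2g_1+r_3g_2=s$ directly by Cramer's rule over the fraction field (the determinant being $-\mu\sfb\neq 0$) and use a divisibility computation to show the solution lies in $\Bbbk[x,y]$. Your route is more uniform: it yields the coefficients in closed form, gives uniqueness of the representation immediately (which also streamlines the independence argument via the same nonvanishing determinant), and avoids the paper's separate "no constant term" digression. The paper's route makes the role of the extra generator $(0,0,\sfb\sfc)$ more visible, which it reuses verbatim in the induction step of Lemma~\ref{lem:reduced}. One small point worth stating explicitly in your argument: the deduction $\gcd(x,\sfb)=1$ uses the standing assumption $b\neq 0$ from Section~\ref{sec:degseq}, which is indeed in force.
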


\begin{proof}
We will prove that $\cB$ is a homogeneous MGS that is also free.  Note that $(0,x\sfa,g_1\sfc)$ is a spline: the GKM condition on the edges labeled $\sfa$ and $\sfc$ are trivially satisfied, and the condition on the edge labeled $\sfb$ is satisfied because $x\sfa=f_1\sfb+g_1\sfc$.   The same argument shows that $(0,y\sfa,g_2\sfc)$ is a spline.  Moreover, the GKM condition for spline $\bb^{2}$ on edge $v_2v_3$ implies $g_1 \neq x$ since  $\sfb = (x+by)^2$ cannot divide the polynomial
\[x\sfa - x\sfc = xy(2ax-2cx+a^2y-c^2y)\]  
by direct computation (or by noting that both $\sfb$ and the right-hand side of the displayed equation are factored into irreducibles, and that the polynomial ring $\Bbbk[x,y]$ is a UFD). A similar argument shows $g_2 \neq y$.

Now we demonstrate that $\cB$ generates the spline $(0,0,\sfb\sfc)$. We have 
\[
yg_1\sfc=xy\sfa-yf_1\sfb \quad \text{and} \quad xg_2\sfc=xy\sfa-xf_2\sfb.
\] 
Subtracting, we obtain the equality
\[(yg_1-xg_2)\sfc=(xf_2-yf_1)\sfb.\] 
If $(yg_1-xg_2)\sfc=0$ then the degree-two factor $yg_1-xg_2$ is identically zero, and so $g_1=rx$ and $g_2=ry$ for some scalar $r$. Plugging this back into the equation $yg_1\sfc=xy\sfa-yf_1\sfb$ and then rearranging, we obtain $x(\sfa-r\sfc)=f_1\sfb$ and similarly $y(\sfa-r\sfc)=f_2\sfb$. Multiplying these two equations by $y$ and $x$ respectively, we obtain $yf_1=xf_2$.  Analyzing degree constraints once more, we conclude $f_1=sx$ and $f_2=sy$ for some scalar $s$. Plugging this back into the equation $x \sfa=f_1\sfb+g_1\sfc$, we have $x\sfa=sx\sfb+rx\sfc$.  In particular $\sfa=s\sfb+r\sfc$, which contradicts the linear independence of $\sfa$, $\sfb$, and $\sfc$ over $\Bbbk$ proved in Lemma~\ref{lem:linearindependence}.

Thus $(yg_1-xg_2)\sfc$ is a homogeneous degree-four polynomial that is divisible by both $\sfb$ and $\sfc$.  It must be a scalar multiple of $\sfb\sfc$ because $\sfb$ and $\sfc$ have no irreducible factors in common. Consequently, the spline 
\[
\bq:=y\bb^{2}-x\bb^{3} = (0,0,(yg_1-xg_2)\sfc)
\]
is a nonzero scalar multiple of $(0,0,\sfb\sfc)$.

Now we show the generators are actually free, namely that if 
\[ p_1 \bid + p_2 \bb^{2} - p_3 \bb^{3} = (0,0,0) \]
then $p_i = 0$ for all $i \in \{1,2,3\}$.  The first coordinate shows that $p_1 = 0$ since on the left-hand side we have
\[ \left( p_1 \bid + p_2 \bb^2 - p_3 \bb^3 \right)_{v_1} = p_1 \bid_{v_1} + 0 - 0 = p_1.\]
Using $p_1 = 0$ and the explicit equations for $\bb^1, \bb^2$, we obtain
\[ (0,p_2x \sfa - p_3y \sfa, p_2g_1 \sfc - p_3 g_2 \sfc) = (0,0,0).\]
Since $p_2x \sfa - p_3 y \sfa = 0$ in a UFD, we conclude as above that $x$ divides $p_3$ and $y$ divides $p_2$.  Write $p_3 = p_3'x$ and $p_2=p_2'y$.  Then we have 
\[p_2' yx \sfa - p_3' xy \sfa = (p_2' - p_3') xy \sfa = 0\] 
and so $p_2' = p_3'$.  Now examining the last coordinate, we see
\[ p_2'yg_1 \sfc - p_3' x g_2 \sfc = \left(p_2'\right)\left( (yg_1 - x g_2) \sfc \right) =0.\]
We just proved that $(yg_1 - xg_2) \sfc$ is a nonzero scalar multiple of $\sfb \sfc$, so this entry is zero if and only if $p_2' = 0$.  Hence all $p_i$ are zero, as desired.

Finally we show that $\cB$ generates an arbitrary spline $\bp \in R_{G,\alpha}$. We have 
\[\bp-\bp_{v_1}\bid=(0,\bp_{v_2}-\bp_{v_1},\bp_{v_3}-\bp_{v_1}).\] 
By the GKM conditions on edges $v_1v_2$ and $v_3v_1$, we have $\bp_{v_2}-\bp_{v_1}=k\sfa$ and $\bp_{v_3}-\bp_{v_1}=\ell \sfc$ for some $k,\ell \in \Bbbk[x,y]$. The GKM condition on edge $v_2v_3$ gives the equation
\[(\bp_{v_2} - \bp_{v_1}) - (\bp_{v_3} - \bp_{v_1}) = k\sfa - \ell \sfc = \ell' \sfb\]
for some $\ell' \in \Bbbk[x,y]$.  Lemma \ref{lem:linearindependence} showed that $\sfa$, $\sfb$, and $\sfc$ are linearly independent over the base field $\Bbbk$, so the only scalar solution to $k\sfa-\ell \sfc - \ell' \sfb=0$ is $k=\ell=\ell'=0$. Thus $k,\ell,\ell'$ are polynomials without constant terms.  
Assume that $h_1,h_2 \in \Bbbk[x,y]$ satisfy $k=h_1x+h_2y$. We have \[\bp_{v_2}-\bp_{v_1}=(h_1x+h_2y)\sfa\]
and 
\[\bp-\bp_{v_1}\bid-h_1\bb^{2}-h_2\bb^{3}=(0,0,\bp_{v_3}-\bp_{v_1}-h_1g_1\sfc-h_2g_2\sfc).\] 
The nonzero entry in this spline must be a multiple of both $\sfb$ and $\sfc$ by the GKM conditions on edges $v_2v_3$ and $v_3v_1$, respectively. Hence \[\bp-\bp_{v_1}\bid-h_1\bb^{2}-h_2\bb^{3}=t\bq\] 
for some $t\in \Bbbk[x,y]$ because we showed above that $\bq$ is a scalar multiple of $(0,0,\sfb \sfc)$. We conclude that $\cB$ generates $R_{G,\alpha}$. Lemma~\ref{lem:folk} asserts that $\cB$ is an MGS as desired.
\end{proof}

The heart of the proof of Theorem~\ref{thm:polysplines}, our main theorem about cycles, is the following lemma.  After proving the lemma, Theorem~\ref{thm:polysplines} will follow easily by applying the reduction lemmas from Section~\ref{sec:reductionlemmas}.

\begin{lemma}\label{lem:reduced}
Let $(G,\alpha)$ be an edge-labeled $n$-cycle containing a sequence of three successive distinct edge-labels. Order the vertices $v_0, v_1, v_2, \ldots, v_n = v_0$ of $(G,\alpha)$ clockwise around the cycle such that $\alpha(v_{i-1}v_{i}) = \langle \sfa_i \rangle$ and $a_{n-1}, a_{n},$ and $a_{1}$ are all distinct. 
 
We give an explicit homogeneous MGS $\cB = \{\bid, \bb^{2}, \ldots, \bb^{n}\}$ for $R_{G,\alpha}$ as follows.
For every $1 < i \le n-2$, let $a_{i,n-1}, a_{i,n}, a_{i,1} \in \Bbbk$ be the base field elements with $\sfa_i = a_{i,n-1} \sfa_{n-1} + a_{i,n} \sfa_n + a_{i,1} \sfa_1$, whose existence is guaranteed by Lemma~\ref{lem:linearindependence}, and define $\bb^{i}$ by
\[
\bb^{i}_{v_j} = \left\{\begin{array}{ll}0 & \text{if } j < i,\\
\sfa_{i} & \text{if } i \le j \le n-2,\\
\sfa_{i} - a_{i,n-1}\sfa_{n-1} & \text{if } j = n-1,\\
\sfa_{i} - a_{i,n-1}\sfa_{n-1} - a_{i,n}\sfa_n & \text{if } j = n.\end{array}\right.
\]
As in Lemma~\ref{lem:trianglecase}, let $f_1,f_2,g_1,g_2\in \Bbbk[x,y]$ denote the homogeneous degree-one polynomials with $x\sfa_{n-1}=f_1\sfa_n+g_1\sfa_{1}$ and $y\sfa_{n-1}=f_2\sfa_n+g_2\sfa_1$ that are guaranteed by Lemma \ref{lem:degree3polybasis}.  Define $\bb^{n-1}$ by
\[
\bb^{n-1}_{v_j} = \left\{\begin{array}{ll}0 & \text{if } j \le n-2,\\
x\sfa_{n-1} & \text{if } j = n-1,\\
g_1\sfa_1 & \text{if } j = n,\\ \end{array}\right.
\]
and $\bb^{n}$ by
\[
\bb^{n}_{v_j} = \left\{\begin{array}{ll}0 & \text{if } j \le n-2,\\
y\sfa_{n-1} & \text{if } j = n-1,\\
g_2\sfa_1 & \text{if } j = n.\\ \end{array}\right.
\]
Then $\cB=\{\bid,\bb^{2},\ldots,\bb^{n}\}$ is a homogeneous basis for $R_{G,\alpha}$ as a free module over the polynomial ring.  Consequently, the degree sequence of $(G,\alpha)$ is $(1,0,n-3,2)$.
\end{lemma}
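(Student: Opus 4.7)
The overall plan is to verify $\cB$ directly, in three stages: showing each $\bb^{i}$ is a spline, showing $\cB$ generates $R_{G,\alpha}$, and showing $\cB$ is linearly independent over $R = \Bbbk[x,y]$. Because the first $n-2$ generators are arranged in an upper-triangular pattern (with $\bb^{i}_{v_j} = 0$ for $j < i$ and $\bb^{i}_{v_i} = \sfa_i \neq 0$), while $\bb^{n-1}$ and $\bb^{n}$ are supported only on $\{v_{n-1}, v_n\}$, the argument naturally reduces, after handling the first $n-2$ coordinates, to the triangle situation analyzed in Lemma~\ref{lem:trianglecase}.

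For the spline check, I would verify the GKM condition on each of the $n$ cyclic edges for every generator. For $\bb^{i}$ with $2 \le i \le n-2$, the only nontrivial edges are $v_{i-1}v_i$, $v_{n-2}v_{n-1}$, $v_{n-1}v_n$, and $v_n v_1$: on the first the difference is $\sfa_i \in \lang \sfa_i \rang$, and on the other three the differences are $a_{i,n-1}\sfa_{n-1}$, $a_{i,n}\sfa_n$, and $a_{i,1}\sfa_1$ respectively, each in the correct ideal by the decomposition $\sfa_i = a_{i,n-1}\sfa_{n-1} + a_{i,n}\sfa_n + a_{i,1}\sfa_1$ from Lemma~\ref{lem:linearindependence}. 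For $\bb^{n-1}$ and $\bb^{n}$, the GKM conditions on edges $v_{n-1}v_n$ and $v_n v_1$ follow from the defining equations in Lemma~\ref{lem:degree3polybasis}, while the remaining edges are trivially satisfied since both endpoints are $0$.

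For spanning, given an arbitrary spline $\bp \in R_{G,\alpha}$, I would first subtract $\bp_{v_1}\bid$ and then inductively, for $i = 2, 3, \ldots, n-2$, subtract an appropriate $R$-multiple of $\bb^{i}$ to zero out the $v_i$-coordinate; this is always possible because at each stage the $v_{i-1}$-coordinate is already $0$ and so the GKM condition on the edge $v_{i-1}v_i$ forces the current $v_i$-value to lie in $\lang \sfa_i \rang$. After these subtractions the remaining spline is supported on $\{v_{n-1}, v_n\}$ and its constraints coincide with those of a 3-cycle with distinct edge-labels $\sfa_{n-1}$, $\sfa_n$, $\sfa_1$; the argument in the proof of Lemma~\ref{lem:trianglecase} then shows $\bb^{n-1}$ and $\bb^{n}$ generate all such splines (the $\bid$-part already having been accounted for).

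For linear independence, a relation $\sum p_i \bb^{i} = 0$ gives $p_1 = 0$ at $v_1$, and then $p_i = 0$ for $2 \le i \le n-2$ follows by the triangular pattern, using that $R$ is a domain. The leftover relation $p_{n-1}\bb^{n-1} + p_n \bb^{n} = 0$ forces $(p_{n-1}x + p_n y)\sfa_{n-1} = 0$ at $v_{n-1}$, hence $p_{n-1} = y p'$ and $p_n = -x p'$ for some $p' \in R$ by unique factorization in $\Bbbk[x,y]$; substituting into the $v_n$-equation yields $p'(yg_1 - xg_2)\sfa_1 = 0$, and the computation in the proof of Lemma~\ref{lem:trianglecase} that $(yg_1 - xg_2)\sfa_1$ is a nonzero scalar multiple of $\sfa_n \sfa_1$ forces $p' = 0$. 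The degree sequence $(1, 0, n-3, 2)$ is then read off directly from the explicit generators: $\bid$ in degree $0$, the $n-3$ generators $\bb^{i}$ (for $2 \le i \le n-2$) in degree $2$, and $\bb^{n-1}, \bb^{n}$ in degree $3$. The main technical delicacy is precisely this last-two-generator step, where the matrix fails to be strictly upper-triangular and one must leverage the UFD structure of $\Bbbk[x,y]$ exactly as in the triangle base case.
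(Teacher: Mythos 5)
Your proposal is correct and follows essentially the same route as the paper: verify the GKM conditions generator by generator, peel off coordinates $v_1,\ldots,v_{n-2}$ by the upper-triangular/inductive argument, and reduce the remaining two generators to the triangle case of Lemma~\ref{lem:trianglecase}, including the same UFD computation showing $(yg_1-xg_2)\sfa_1$ is a nonzero scalar multiple of $\sfa_n\sfa_1$. The only cosmetic difference is that you re-derive the independence of $\bb^{n-1},\bb^{n}$ inline, whereas the paper cites the uniqueness statement already established in Lemma~\ref{lem:trianglecase}.
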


\begin{proof}
We will check that $\cB$ is a homogeneous MGS and that it is free, whence we will conclude that it is a homogeneous basis for the free module $R_{G,\alpha}$.

We first check that $\bb^{i}$ is a spline in $R_{G,\alpha}$ for all $i > 1$. For the $\bb^{i}$ with $1 < i \le n-2$ this is clear by the definition of $\bb^{i}$, and for $\bb^{n-1}$ (respectively $\bb^n$) this follows from the GKM condition together with rewriting the defining equation as $x\sfa_{n-1} - g_1 \sfa_{1}=f_1\sfa_n$ (respectively as $y\sfa_{n-1} - g_2 \sfa_1 = f_2 \sfa_n$).

Now we show that $\cB$ generates an arbitrary spline $\bp \in R_{G,\alpha}$.  We claim that there exist $r_1, r_2,\ldots,r_n \in \Bbbk[x,y]$ such that
\begin{equation}\label{eqn:inductionproof}
\bp = r_1\bid + r_2\bb^{2} + \cdots + r_n\bb^{n}.
\end{equation}
For this, it is sufficient to prove that for all $1 \le m \le n$, we can find coefficients $r_1, r_2, \ldots, r_m \in \Bbbk[x,y]$ such that the spline $r_1 \mathbf{1} + r_2 \bb^2 + \cdots + r_m \bb^m$ agrees with $\bp$ when evaluated at the first $m$ vertices.
We will use induction up to $n-2$, then deal with $\bb^{{n-1}}$ and $\bb^{{n}}$ separately.  For the base case, note that the spline $\bp - \bp_{v_1}\bid$ has $(\bp - \bp_{v_1}\bid)_{v_1} = 0$.  The inductive hypothesis asserts that we can find coefficients $r_1, r_2, \ldots, r_m \in \Bbbk[x,y]$ with $m < n-2$ so that the spline $r_1\mathbf{1}+r_2\bb^{2}+\cdots+r_m\bb^{m}$ agrees with $\bp$ when evaluated at the first $m$ vertices.
In other words, assume we have found $r_1, r_2,\ldots,r_m \in \Bbbk[x,y]$ with $m < n-2$ such that
\[
(\bp - r_1\bid - r_2 \bb^{2} - \cdots - r_m \bb^{m})_{v_j} = 0
\]
for all $j \le m$.  Thus by the GKM condition on edge $v_mv_{m+1}$, there exists $r_{m+1} \in \Bbbk[x,y]$ such that
\[
(\bp - r_1\bid - r_2 \bb^{2} - \cdots - r_m \bb^{m})_{v_{m+1}} = r_{m+1}\sfa_{m+1}.
\]
Hence the spline $r_1\mathbf{1}+r_2\bb^{2}+\cdots+r_m\bb^{m}+r_{m+1}\bb^{{m+1}}$ agrees with $\bp$ when evaluated at the first $m+1$ vertices, as desired.  By induction, we have produced $r_1,r_2,\ldots,r_{n-2} \in \Bbbk[x,y]$ such that  $r_1\mathbf{1}+r_2\bb^{2}+\cdots+r_{n-2}\bb^{{n-2}}$ agrees with $\bp$ when evaluated at the first $n-2$ vertices.

To conclude the proof, we essentially use the same argument as in the proof of Lemma~\ref{lem:trianglecase}.  Indeed, suppose $(T,\alpha')$ is the edge-labeled $3$-cycle with vertices $v_1, v_{n-1}, v_n$, and with edge-labeling given by $\alpha'(v_1v_{n-1}) = \lang\sfa_{n-1}\rang$, $\alpha'(v_{n-1}v_{n}) = \lang \sfa_n \rang$, and $\alpha'(v_nv_1) = \lang\sfa_{1}\rang$.  Let $\mathcal{G}$ be the subset of $R_{G,\alpha}$ in which all vertices $v_1,v_2,\ldots,v_{n-2}$ are labeled zero.  Note that $\mathcal{G}$ is isomorphic to the subset of $R_{T,\alpha'}$ in which vertex $v_1$ is labeled zero, via the map $\mathcal{G} \rightarrow R_{T,\alpha'}$ that erases the initial $n-1$ zeros from each spline $\bp \in \mathcal{G}$.  Thus inserting $n-1$ leading zeros into the nontrivial generators from Lemma~\ref{lem:trianglecase} gives generators for $\mathcal{G}$.  

It follows that $\cB$ generates $R_{G,\alpha}$. Indeed, we first proved that for any spline $\bp \in R_{G,\alpha}$ we can find a {\em unique} linear combination of the splines $\{\bid, \bb^1, \ldots, \bb^{n-2}\}$ so that $\bp - r_0 \bid -  \sum_{i=1}^{n-2} r_i \bb^i$ is zero when evaluated at the first $n-2$ vertices.  Lemma \ref{lem:trianglecase} then proved that if a spline in $R_{G,\alpha}$ is zero at the first $n-2$ vertices, it is {\em uniquely} generated by $\{\bb^{n-1}, \bb^n\}$. 
The generating set $\cB$ is thus minimal and a free set of generators for the module of splines $R_{G,\alpha}$ over the polynomial ring. 

Finally, the statement on the degree sequence follows because $\bid$ is a degree-zero spline, $\bb^{i}$ is a degree-two spline for all $1 < i \le n-2$, and $\bb^{{n-1}}$ and $\bb^{n}$ are both degree-three splines.
\end{proof}

\begin{theorem}\label{thm:polysplines}
Let $(C_n,\alpha_n)$ be an $n$-cycle with three or more distinct (not necessarily successive) edge-labels.  The following algorithm constructs a homogeneous MGS $\cB_n$ for $R_{C_n,\alpha_n}$:
\begin{enumerate}
    \item Let $C_{n-k}$ be the reduced cycle with edge-labeling $\alpha_{n-k}$ obtained from $C_n$ by eliminating vertices whose incident edges have the same label.
    \item Let $\cB_{n-k}$ be the homogeneous MGS for $R_{C_{n-k},\alpha_{n-k}}$ from Lemma~\ref{lem:reduced}.
    \item Create $\cB_n$ from $\cB_{n-k}$ by successively reinserting vertices on repeated edges according to Corollary~\ref{cor:repeatedge}.
\end{enumerate}
\end{theorem}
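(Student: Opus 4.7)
The plan is to verify that the three-step algorithm produces a homogeneous MGS by chaining together Corollary~\ref{cor:repeatedge}, Lemma~\ref{lem:seqof3}, and Lemma~\ref{lem:reduced}; essentially all the technical work has already been done in these supporting results, so the proof reduces to checking that their hypotheses align at each stage.

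First I would verify that Step 1 is well-defined and terminates. Whenever a vertex $w$ has two incident edges carrying the same ideal label $\lang\si\rang$, $w$ is precisely the kind of vertex that Corollary~\ref{cor:repeatedge} inserts; running that corollary in reverse removes $w$ and fuses its two incident edges into a single edge still labeled $\lang\si\rang$, producing an edge-labeled cycle with one fewer vertex. Since the vertex count strictly decreases, this terminates in a reduced cycle $(C_{n-k},\alpha_{n-k})$ in the sense of Definition~\ref{def:reduced}, and the reduction preserves the set of edge-labels appearing on the cycle.

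Next, for Step 2, the reduced cycle $(C_{n-k},\alpha_{n-k})$ still carries at least three distinct edge-labels, so Lemma~\ref{lem:seqof3} supplies three successive distinct labels—precisely the hypothesis of Lemma~\ref{lem:reduced}. That lemma then outputs an explicit homogeneous MGS $\cB_{n-k}$ for $R_{C_{n-k},\alpha_{n-k}}$.

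Finally, I would run Step 3 by induction on the number of reinserted vertices, applying Corollary~\ref{cor:repeatedge} one vertex at a time. Each invocation lifts every existing generator of the current MGS to the enlarged cycle (by copying its value at the new vertex from one of its neighbors, which is forced by the GKM condition on the duplicated edge-label) and adjoins a single new indicator-type generator supported at the new vertex with entry the homogeneous degree-two polynomial generating the duplicated edge-label. The corollary certifies that the enlarged set is again an MGS and that the adjoined generator has degree two. Because every edge-label in our setting has the homogeneous generator $(x+ay)^2$, homogeneity is preserved at every step, so the final output $\cB_n$ is a homogeneous MGS for $R_{C_n,\alpha_n}$.

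The only care required, and hence the main (modest) obstacle, is bookkeeping: each application of Corollary~\ref{cor:repeatedge} inserts a vertex between a specific pair of cyclically adjacent vertices, so the cyclic labeling and the vertex ordering underlying Lemma~\ref{lem:reduced}'s MGS must be updated consistently after each insertion. Since both Corollary~\ref{cor:repeatedge} and the form of Lemma~\ref{lem:reduced}'s generators are invariant under cyclic relabeling, this is purely notational and poses no substantive difficulty.
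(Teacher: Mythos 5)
Your proposal is correct and follows essentially the same route as the paper: reduce via Corollary~\ref{cor:repeatedge}, invoke Lemma~\ref{lem:seqof3} to get three successive distinct labels, apply Lemma~\ref{lem:reduced} for the reduced cycle, and reinsert vertices one at a time using Corollary~\ref{cor:repeatedge} (whose explicit construction is Lemma~\ref{lem:repeatedgeMGS}). Your version spells out termination of the reduction and preservation of the label set a bit more explicitly than the paper does, but the logical content is the same.
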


\begin{proof}
Suppose $C_n = (V_n, E_n)$ is a cycle in which $k$ vertices are incident to two edges with the same label.  Without loss of generality, label the vertices sequentially around the cycle so that $v_n$ is one of the vertices incident to two edges with the same label. Using Corollary~\ref{cor:repeatedge}, write
\[R_{C_n,\alpha_n} \cong \alpha_{n-1}(v_{n-1}v_1) \oplus R_{C_{n-1},\alpha_{n-1}}\]
where $(C_{n-1}, \alpha_{n-1})$ is an edge-labeled $(n-1)$-cycle with only $k-1$ vertices are incident to two edges with the same label.  Repeat this process until no vertices are incident to edges with the same label, leaving a reduced $(n-k)$-cycle $(C_{n-k},\alpha_{n-k})$ with edge-labeling $\alpha_{n-k}$ obtained from $\alpha_n$.  

Lemma~\ref{lem:seqof3} proves that the reduced cycle $C_{n-k}$ contains three successive distinct edge-labels.  Thus we may apply Lemma~\ref{lem:reduced} to obtain the homogeneous MGS $\cB_{n-k}$ for $R_{C_{n-k},\alpha_{n-k}}$. Reinserting each repeated edge according to Corollary~\ref{cor:repeatedge} (with explicit formula given in Lemma~\ref{lem:repeatedgeMGS}) gives a generating set for $C_n$.  Because it has the same number of elements as vertices, the final output $\cB_n$ is an MGS for $C_n$ per Lemma~\ref{lem:folk}.
\end{proof}

Example~\ref{ex:mainpolysplines} shows an example of how to use the algorithm in Theorem~\ref{thm:polysplines} to produce a homogeneous MGS.  First, we give the following corollary, which classifies degree sequences for all splines on cycles whose edge-labels are principal ideals generated by homogeneous degree-two polynomials in $\Bbbk[x,y]$.

\begin{corollary}\label{cor:polysplines}
Let $G=(V,E)$ be an $n$-cycle and let $\cI$ be the set of principal ideals of $R = \mathbb{\Bbbk}[x,y]$ of the form $\lang (x+ky)^2 \rang$, where $k\in\Bbbk$. Let $\alpha\co E\ra \cI$ be an edge-labeling of $G$.  Then the following hold:
\begin{enumerate}
    \item\label{item:1} If $(G,\alpha)$ has exactly one distinct edge label, then its degree sequence is $(1,0,n-1)$.
    \item\label{item:2} If $(G,\alpha)$ has exactly two distinct edge labels, then its degree sequence is $(1,0,n-2,0,1)$.
    \item\label{item:3} If $(G,\alpha)$ has three or more distinct edge labels, then its degree sequence is $(1,0,n-3,2)$.
\end{enumerate}
\end{corollary}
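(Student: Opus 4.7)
The plan is to handle each of the three cases in turn by invoking the MGS-producing algorithms established earlier in the paper and reading off the degrees of the generators they output. Proposition~\ref{prop:degseqinvt} guarantees that the resulting degree sequence is independent of the particular homogeneous MGS chosen, so exhibiting any single homogeneous MGS for each case is enough.

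For part~(1), I apply Theorem~\ref{thm:onelabel} directly. It produces the MGS $\{\mathbf{1}, \bI^{v_2}, \ldots, \bI^{v_n}\}$: the trivial spline contributes one degree-zero generator, and each indicator spline has its lone nonzero entry equal to the common edge-label generator $(x+ky)^2$, which is homogeneous of degree two. This MGS is manifestly homogeneous, so the degree sequence is $(1,0,n-1)$.

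For part~(2), I apply Theorem~\ref{thm:2labalg} using the cyclic ordering $v_1,\ldots,v_n$ and choosing the earlier neighbor $v_{i-1}$ when processing each $v_i$, so $\sk=\alpha(e_{i-1})$ with $e_{i-1}=v_{i-1}v_i$. Each $\bb^{i}$ produced is either a Case~(a) generator (degree two, since its nonzero entries equal a generator of an edge-label) or a Case~(b) generator (degree four, since its single nonzero entry is $\lcm{\sfa,\sfb}=\sfa\sfb$, as $\sfa=(x+ay)^2$ and $\sfb=(x+by)^2$ share no irreducible factor when $a\neq b$). The key combinatorial step is to show that Case~(b) occurs at exactly one index. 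Analyzing the connected component of $v_i$ in $G-\{\sk\textup{-edges}\}$ shows that it extends forward from $v_i$ until the next $\sk$-edge, and Case~(b) applies precisely when no $\sk$-edge lies in $\{e_i,\ldots,e_n\}$---that is, when $e_{i-1}$ is the last $\sk$-edge in the linear sequence $e_1,\ldots,e_n$. Since $e_n$ carries exactly one of the two labels, only the other label has its last occurrence at some $e_j$ with $j<n$, triggering a unique $i=j+1$. The resulting MGS has one generator each of degrees zero and four and $n-2$ generators of degree two, so the degree sequence is $(1,0,n-2,0,1)$.

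For part~(3), I invoke the algorithm of Theorem~\ref{thm:polysplines}: reduce $C_n$ to a reduced cycle $C_{n-k}$ by iteratively removing vertices whose two incident edges carry the same label, apply Lemma~\ref{lem:reduced} to obtain a homogeneous MGS for $C_{n-k}$ with degree sequence $(1,0,(n-k)-3,2)$, and finally re-insert the $k$ removed vertices using Corollary~\ref{cor:repeatedge}. Each re-insertion contributes one new generator of degree equal to the degree of the relevant edge-label (namely two, since each edge-label is $(x+ay)^2$) and leaves the other multiplicities unchanged, so the degree-two count climbs from $(n-k)-3$ to $n-3$, yielding the final degree sequence $(1,0,n-3,2)$. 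Because each reduction step preserves the multiset of edge-labels, the reduced cycle retains at least three distinct labels and hence has at least three vertices, so Lemma~\ref{lem:reduced} indeed applies. The main obstacle across the three parts is the combinatorial bookkeeping in part~(2) establishing that Case~(b) of Theorem~\ref{thm:2labalg} fires at exactly one vertex; the other parts follow by direct appeal to the main theorems once the mild verification about reducibility in part~(3) is in hand.
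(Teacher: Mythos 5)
Your proposal is correct and follows essentially the same route as the paper: part~(1) by reading off degrees from Theorem~\ref{thm:onelabel}, part~(2) by analyzing which case of Theorem~\ref{thm:2labalg} fires at each vertex, and part~(3) by combining Theorem~\ref{thm:polysplines}, Lemma~\ref{lem:reduced}, and Corollary~\ref{cor:repeatedge}. The only (harmless) divergence is in part~(2): the paper avoids your ``last occurrence of each label'' bookkeeping by choosing the vertex ordering to start just after a vertex incident to both labels, so that Case~(b) automatically fires only at $v_n$, whereas you argue the same count for an arbitrary cyclic ordering.
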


\begin{proof}
We prove each of \eqref{item:1}--\eqref{item:3} separately.

\textit{Proof of \eqref{item:1}}.  This follows immediately from Theorem~\ref{thm:onelabel}: in the MGS $\{\mathbf{1}, \bI^{v_2}, \ldots, \bI^{v_{|V|}}\}$,  the trivial spline $\mathbf{1}$ is a degree-zero spline and each of the $(|V|-1)$-many $\bI^{v_i}$ is a degree-two spline.

\textit{Proof of \eqref{item:2}}.  
The proof is essentially an analysis of the MGS $\cB$ produced by Theorem~\ref{thm:2labalg} for a certain nice vertex ordering.  Since $(G,\alpha)$ has exactly two distinct edge-labels, we choose an ordering of the vertices satisfying Proposition~\ref{prop:order} by choosing the last vertex $v_n \in V$ to be any vertex incident to two edges with different labels; the vertex $v_1$ is chosen as the next vertex clockwise from $v_n$, and we continue choosing vertices $v_2, \ldots, v_{n-1}$ clockwise until all vertices have been ordered.  Without loss of generality, suppose that $\alpha(v_{n-1}v_n) = \lang \sfb \rang$ and $\alpha(v_nv_1) = \lang \sfa \rang$.

\begin{figure}[ht!]
\begin{tikzcd}[row sep = 6, column sep = 7]
 &  & & v_n \ar[drr,dash,"\lang\sfa\rang"] & &  &  \\
 & v_{n-1} \ar[urr,dash,"\lang\sfb\rang"] & & & & v_1 \ar[dr,dash,"\lang\sfa\rang \text{ or } \lang\sfb\rang"] &  \\
v_{n-2} \ar[ur,dash,"\lang\sfa\rang \text{ or } \lang\sfb\rang"] & & & & & & v_2 \ar[llllll,bend left, dotted,dash] \\
  & & & & & & \\
\end{tikzcd}
\caption{The idea of the proof of (2).}
\end{figure}
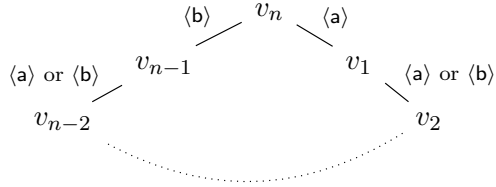

We claim that
\begin{itemize}
    \item $\bid$ is a degree-zero spline,
    \item $\bb^{i}$ is a degree-two spline for all $2 \le i < n$, and
    \item $\bb^{n}$ is a degree-four spline.
\end{itemize}
The first assertion is clear.  For the second assertion, let us assume that while producing $\bb^{i}$ using Theorem~\ref{thm:2labalg}, we chose $v_j = v_{i-1}$.
\begin{itemize}
    \item[Case 1:] The edge-label $\alpha(v_iv_{i-1}) = \lang \sfa \rang$.
    \item[Case 2:] The edge-label $\alpha(v_iv_{i-1}) = \lang \sfb \rang$.
\end{itemize}

The graph $C = (V',E')$ has vertex set $V'$ a subset of the set $\{v_i,v_{i+1},\ldots,v_n\}$ in Case 1 (resp. $\{v_i,v_{i+1},\ldots,v_{n-1}\}$ in Case 2).  In both cases, Theorem~\ref{thm:2labalg} (a) must have been applied in the production of $\bb^{i}$, so the spline $\bb^{i}$ is a degree-two spline, and we have verified the second assertion.

For the third assertion, we again assume that while producing $\bb^{n}$ using Theorem~\ref{thm:2labalg}, we chose $v_j = v_{i-1} = v_{n-1}$.  Now the graph $C$ contains the edge $v_nv_1$, so Theorem~\ref{thm:2labalg} (b) must have been applied in the production of $\bb^{n}$. Hence $\bb^{n}$ is a degree-four spline, and we have verified the third and final assertion.

\textit{Proof of \eqref{item:3}}.  
This is a consequence of Theorem~\ref{thm:polysplines}.  By Corollary~\ref{cor:repeatedge}, the homogeneous MGS $\cB_{n-k}$ for $R_{C_{n-k},\alpha_{n-k}}$ has degree sequence $(1,0,n-k-3,2)$.  For every $0 \le j \le k-1$, the homogeneous MGS $\cB_{n-k+(j+1)}$ for $R_{C_{n-k+(j+1)},\alpha_{n-k+(j+1)}}$ has degree sequence $(1,0,n-k+(j+1)-3,2)$.  After all iterations (when $j = k-1$), we obtain the homogeneous MGS $\cB_n$ with degree sequence $(1,0,n-3,2)$ as desired.
\end{proof}

\begin{example}\label{ex:mainpolysplines}
We produce a homogeneous MGS $\cB_6$ for the following edge-labeled six-cycle by using the algorithm in Theorem~\ref{thm:polysplines}.
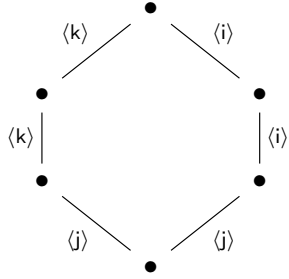
\begin{figure}[ht!]
    \centering
    \begin{tikzcd}
    & \bullet \ar[dl,dash,swap,"\lang\sk\rang"] \ar[dr,dash,"\lang\si\rang"] &\\
\bullet \ar[d,dash,swap,"\lang\sk\rang"] & & \bullet \ar[d,dash,"\lang\si\rang"] \\
\bullet & & \bullet \\
 & \bullet \ar[ul,dash,"\lang\sj\rang"] \ar[ur,dash,swap,"\lang\sj\rang"] & 
\end{tikzcd}
\caption{An edge-labeled six-cycle.}
\end{figure}

This is illustrated explicitly in Figure~\ref{fig:mainex}: one picture is shown for each reinsertion of a vertex on a repeated edge according to Corollary~\ref{cor:repeatedge}, along with the associated MGS obtained at that step via Lemma~\ref{lem:repeatedgeMGS}.

\begin{figure}[ht!]
    \centering
    \adjustbox{scale=.85}{%
    \begin{tikzcd}
    & v_1 \ar[ddl,dash,swap,"\lang\sk\rang"] \ar[ddr,dash,"\lang\si\rang"] &\\
 & &  \\
v_3 \ar[rr,dash,swap,"\lang\sj\rang"] & & v_2 \\
 &  & 
\end{tikzcd}}
\quad
${\footnotesize\cB_3 = \left\{\bid,\left(\begin{array}{l}0\\x\si\\g_1\sk\end{array}\right),\left(\begin{array}{l}0\\y\si\\g_2\sk\end{array}\right) \right\}}$

\qquad

\adjustbox{scale=.85}{%
    \begin{tikzcd}
    & v_1 \ar[dl,dash,swap,"\lang\sk\rang"] \ar[ddr,dash,"\lang\si\rang"] &\\
v_4 \ar[d,dash,swap,"\lang\sk\rang"] & &  \\
v_3 \ar[rr,dash,swap,"\lang\sj\rang"] & & v_2 \\
 &  & 
\end{tikzcd}}
\quad
${\footnotesize\cB_4 = \left\{\bid,\left(\begin{array}{l}0\\x\si\\g_1\sk\\g_1\sk\end{array}\right),\left(\begin{array}{l}0\\y\si\\g_2\sk\\g_2\sk\end{array}\right),\left(\begin{array}{l}0\\0\\0\\\sk\end{array}\right) \right\}}$

\qquad

\adjustbox{scale=.85}{%
    \begin{tikzcd}
    & v_4 \ar[dl,dash,swap,"\lang\sk\rang"] \ar[dr,dash,"\lang\si\rang"] &\\
v_3 \ar[d,dash,swap,"\lang\sk\rang"] & & v_5 \ar[d,dash,"\lang\si\rang"] \\
v_2 \ar[rr,dash,swap,"\lang\sj\rang"] & & v_1 \\
 & & 
\end{tikzcd}}
\quad
${\footnotesize\cB_5 = \left\{\bid,\left(\begin{array}{l}x\si\\g_1\sk\\g_1\sk\\0\\
0\end{array}\right),\left(\begin{array}{l}y\si\\g_2\sk\\g_2\sk\\0\\0\end{array}\right),\left(\begin{array}{l}0\\0\\\sk\\0\\0\end{array}\right),\left(\begin{array}{l}0\\0\\0\\0\\\si\end{array}\right) \right\}}$

\qquad

\adjustbox{scale=.85}{%
    \begin{tikzcd}
    & v_3 \ar[dl,dash,swap,"\lang\sk\rang"] \ar[dr,dash,"\lang\si\rang"] &\\
v_2 \ar[d,dash,swap,"\lang\sk\rang"] & & v_4 \ar[d,dash,"\lang\si\rang"] \\
v_1 & & v_5 \\
 & v_6 \ar[ul,dash,"\lang\sj\rang"] \ar[ur,dash,swap,"\lang\sj\rang"] & 
\end{tikzcd}}
\quad
${\footnotesize\cB_6 = \left\{\bid,\left(\begin{array}{l}g_1\sk\\g_1\sk\\0\\0\\
x\si\\x\si\end{array}\right),\left(\begin{array}{l}g_2\sk\\g_2\sk\\0\\0\\y\si\\y\si\end{array}\right),\left(\begin{array}{l}0\\\sk\\0\\0\\0\\0\end{array}\right),\left(\begin{array}{l}0\\0\\0\\\si\\0\\0\end{array}\right),\left(\begin{array}{l}0\\0\\0\\0\\0\\\sj\end{array}\right) \right\}}$
    \caption{An illustration of the algorithm in the proof of Theorem~\ref{thm:polysplines}.}
    \label{fig:mainex}
\end{figure}
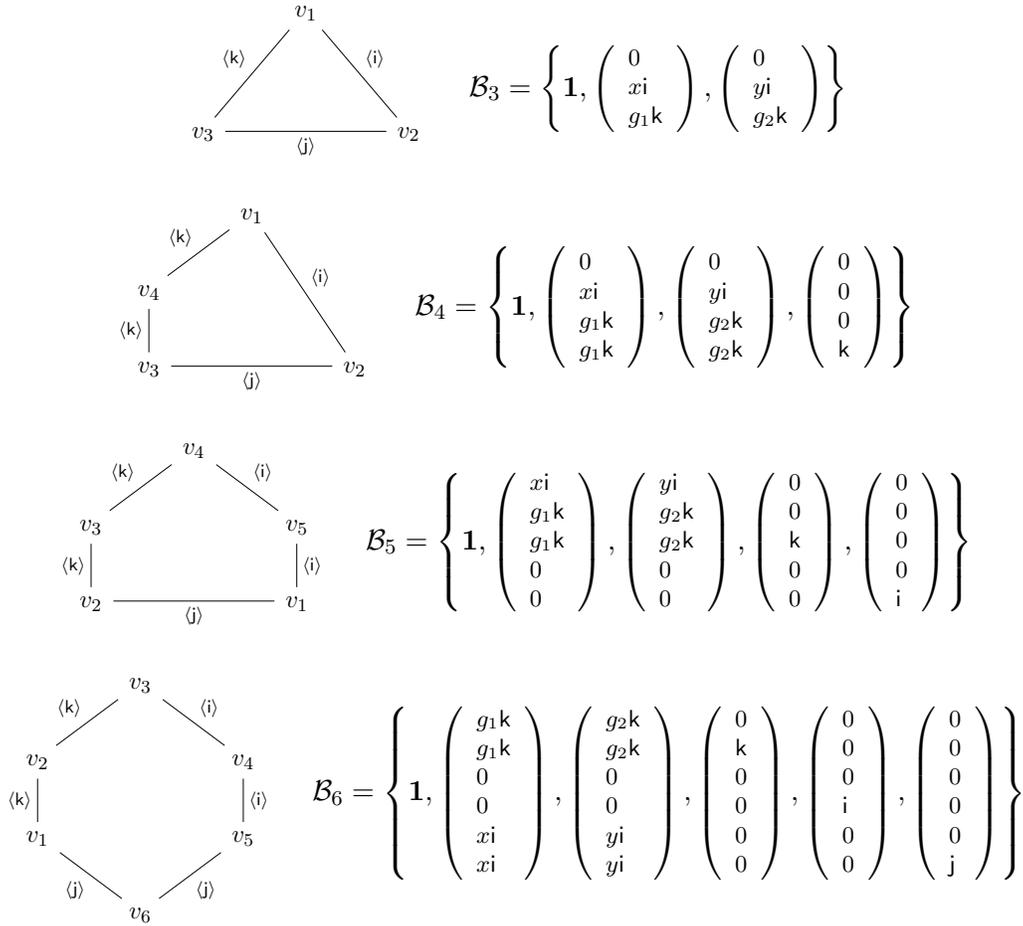
\end{example}

\section{Quotient splines and homogenization}
\label{sec: quotient splines}

In Section~\ref{sec:quotient}, we describe  {\textit{quotient splines}}, especially those that arise from a quotient map $R \rightarrow R/I$ on the coefficient ring.  The most important applications of splines involve subtle questions about quotient splines in the case when the coefficient ring $R = \Bbbk[x_1,\ldots,x_n]$, where the field is usually $\Bbbk = \mathbb{R}$ or $\mathbb{C}$.  In this case, splines inherit a notion of degree from the  degree of the polynomial at each vertex.  Classical splines usually consist of elements of degree at most $d$; we will show below that this is essentially equivalent to either of two different quotient constructions on  splines.  In GKM theory, splines model the equivariant cohomology ring of certain algebraic varieties with torus-actions; in this case, the quotient ring is isomorphic to ordinary cohomology (see Proposition~\ref{proposition: GKM and quotient splines}).   

The results in this section use the more general language of \textit{graded rings} to streamline proofs. The reader interested primarily in applications can translate everything in this section to polynomial rings as follows:
\begin{itemize}
    \item Homogeneous polynomials are those for which every nonzero term has the same degree (where the degree of a term is the sum of the exponents of all the  variables in that term).
    \item The $i^{th}$ graded part of the polynomial ring consists of $0$ together with the homogeneous polynomials of degree $i$.
    \item The edge-label $\alpha(uv)$ is homogeneous if every generator of $\alpha(uv)$ is a homogeneous polynomial.
    \item The degree of a spline $\bp$ is the maximal degree of the polynomials $\bp_v$ over all vertices $v$.
    \item A spline $\bp$ is homogeneous of degree $i$ if every nonzero $\bp_v$ is a homogeneous polynomial of degree $i$.
\end{itemize}

In this section, we establish the following:
\begin{itemize}
    \item Given any ideal $\mathcal{I} \subseteq R_{G,\alpha}$, the \textit{quotient splines} are the elements of the quotient ring $R_{G,\alpha}/\mathcal{I}$.  This quotient inherits the structure of an $R$-module.
    \item Suppose that $R$ is a graded ring. Suppose that $\alpha$ is an edge-labeling of $G$ for which every $\alpha(uv)$ is a homogeneous ideal, called a \textit{homogeneous edge-labeling}.  Then the splines $R_{G,\alpha}$ also form a graded ring whose $i^{th}$ graded part consists of splines $\bp \in R_{G,\alpha}$ for which $\bp_v$ is homogeneous of degree $i$ for each vertex $v$.
    \item Every surjective ring homomorphism $\pi\co R \rightarrow S$ induces a change-of-coefficients map on splines from $R_{G,\alpha}$ to $S_{G,\pi \circ \alpha}$.  We use this most for the usual projection map $\pi\co R \rightarrow R/I$.  
    \item Suppose that $I$ is a homogeneous ideal and that $\pi\co R \rightarrow R/I$ is the quotient map.  Assume that $\alpha$ is a homogeneous edge-labeling, and let $\mathcal{I}$ denote the splines $\bp$ with $\pi(\bp_v) \in I$ for all vertices $v$.  Then the ring of quotient splines is isomorphic to the ring of splines over the quotient ring.  In other words, we have:
\[ \left(R_{G,\alpha} \right)/\mathcal{I} \cong \left(R/I\right)_{G,\pi \circ \alpha}
\]
\end{itemize}
In Section~\ref{sec:homog}, we restrict to the case when our coefficient ring is a polynomial ring. The key point for applications is that when the coefficients are polynomials and the edge-labeling is homogeneous, the (quotient) ring of splines of degree at most $k$ is isomorphic to the ring obtained by restricting degree on the entries of the original collection of splines.
Moreover, we describe how to homogenize an edge-labeling so we can use these results for non-homogeneous edge-labelings. This will be the main tool used when we interpret the results of this paper for classical splines in Section~\ref{sec:applications}.

\subsection{Quotient splines}\label{sec:quotient}

We start with the basic definition of quotient splines as well as a natural quotient map on splines.  The underlying ideas are similar to that of Bowden and the third author \cite[Theorem 3.7]{BowTym15}.

\begin{definition} \label{definition: splines over quotient ring}
Let $\pi\co R \rightarrow S$ be a surjective ring homomorphism.  If $(G,\alpha)$ is an edge-labeled graph over $R$, then define $(G, \pi \circ \alpha)$ to be the edge-labeled graph over the ring $S$ in which each edge $e$ is labeled $\pi(\alpha(e))$. We call  $S_{G,\pi \circ \alpha}$ the \textit{splines induced by $\pi$ over $S$} or, when context is clear, the \textit{splines on $(G,\alpha)$ over $S$}.  The map $\pi$ may be referred to as a \textit{change of coefficients} for the splines.

In particular if $I$ is an ideal in $R$ and $\pi\co R \rightarrow R/I$ is the quotient map, then the elements of $\left(R/I\right)_{G,\pi \circ \alpha}$ are called \textit{splines reduced mod $I$} or just \textit{splines mod $I$}. 
\end{definition}

Not every ring homomorphism sends ideals in the domain to ideals in the codomain, but all surjective ring homomorphisms do. Thus the previous definition makes sense.

We give the following proposition for completeness; it simply confirms that the map $\pi$ induces a homomorphism from splines on $(G,\alpha)$ over $R$ to splines on the same graph over $S$.

\begin{proposition} \label{proposition: surjective mostly functoriality}
Suppose $\pi\co R \rightarrow S$ is a surjective ring homomorphism.  For each spline $\bp \in R_{G,\alpha}$ and vertex $v \in V$, the rule
\[\pi_*(\bp)_v = \pi(\bp_v)\]
defines a map $\pi_* \co R_{G,\alpha} \rightarrow S_{G,\pi \circ \alpha}$ that is a homomorphism of both graded rings and $R$-modules when $S_{G,\pi \circ \alpha}$ is endowed with the $R$-action
\[r \cdot \pi_* (\bp) := \pi(r)\pi_*(\bp).\]
\end{proposition}

\begin{proof}
The image $\pi_*(\bp)$ is a spline because for each edge $uv \in E$ we have
\[\pi(\bp_u) - \pi(\bp_v) = \pi(\bp_u - \bp_v) \in \pi (\alpha(uv)).\]
The rest of the claim follows by definition of the map $\pi_*$.
\end{proof}

Recall that a homogeneous ideal in a graded ring is characterized by the property that if $f \in I$ decomposes into homogeneous parts $f=f_0+f_1+\cdots + f_k$ then each homogeneous part $f_j \in I$ as well.  When $R$ is a graded ring and each edge is labeled by a homogeneous ideal, the ring of splines $R_{G,\alpha}$ admits a grading induced by the grading on $R^{|V|}$ as follows.

\begin{proposition} \label{proposition: grading on splines}
Suppose that $R$ is a graded ring with $R_0$ denoting the collection of degree-zero ring elements.  Further suppose that $(G,\alpha)$ is graph whose edges are all labeled by homogeneous ideals $\alpha(uv)$.

Then the ring of splines $R_{G,\alpha}$ is graded with homogeneous parts $\left( R_{G,\alpha} \right)_i$ containing precisely those splines $\bp$ for which $\bp_v$ has degree $i$ for all $v \in V$.  
\end{proposition}

\begin{proof}
Suppose that $\bp \in R_{G,\alpha}$, and that for each $v \in V$ the ring element $\bp_v \in R$ decomposes into homogeneous parts denoted
\[\bp_v = p_{0,v}+p_{1,v} + \cdots + p_{i,v}\]
with degrees $0, 1, \ldots, i$ respectively.  (Each part may be zero.)  The homogeneous part $\bp_j$ of the spline $\bp$ is defined at each vertex $v$ by
\[\bp_{j,v} = p_{j,v}.\]
We need to show that for each $j$ the homogeneous part $\bp_j$ is also a spline in $R_{G,\alpha}$.  

Suppose that $uv$ is an edge in $G$.  Expanding $\bp_u, \bp_v$ into homogeneous parts yields for $\bp_u - \bp_v$ the expression
\[\left(p_{0,u}+p_{1,u} + \cdots + p_{i,u}\right) - \left(p_{0,v}+p_{1,v} + \cdots + p_{i,v}\right),\]
which is in $\alpha(uv)$ by the spline condition on $\bp$.  Using associativity and commutativity, the above expression is equal to 
\[\left(p_{0,u}-p_{0,v}\right)+ \left( p_{1,u}-p_{1,v} \right) + \cdots + \left( p_{i,u} - p_{i,v} \right).\]
Homogeneous ideals contain each homogeneous part of each element of the ideal, so
\[p_{j,u} - p_{j,v} \in \alpha(uv)\]
for each $j$.  Thus $\bp_j$ satisfies the spline condition at each edge $uv$ for each $j$ and so $\bp_j \in R_{G,\alpha}$ for all $j$ as desired.  
\end{proof}

\begin{example} \label{example: first reason for homogeneity}
If edge-labels are not homogeneous ideals, then the ring of splines might not be graded by degree. For instance, consider the edge-labeled graph in Figure~\ref{figure: example of non-homogenized graph} with coefficient ring of polynomials in one variable.

\begin{figure}[h]
\begin{picture}(200,20)(-100,-10)
\put(-59,-2){$u$}
\put(53,-2){$v$}
\put(-50,0){\line(1,0){100}}
\put(-17,5){$\langle x^2-1 \rangle$}
\end{picture}
\caption{Example of an edge-labeled graph with non-homogeneous polynomial labeling.} \label{figure: example of non-homogenized graph}
\end{figure}
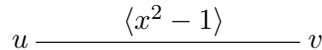
Consider the spline $\bp$ on this edge-labeled graph with $\bp_u = 0$ and $\bp_v = x^2-1$.  Under the typical grading on polynomials, the homogeneous parts of $\bp$ would be the splines $(0,-1)$ in degree zero and $(0,x^2)$ in degree two.  However neither of these are in $R_{G,\alpha}$.
\end{example}

Proposition~\ref{proposition: grading on splines} means that we can define the following.

\begin{definition}\label{definition: homogeneity and degree d parts}
    An edge-labeling $\alpha$ is \textit{homogeneous} if $\alpha(uv)$ is a homogeneous ideal for every edge $uv$.  The homogeneous degree-$i$ part of the ring $R_{G,\alpha}$ with grading defined in Proposition~\ref{proposition: grading on splines} is denoted $\left( R_{G,\alpha} \right)_i$ and called the collection of \textit{(homogeneous) splines of degree $i$}. We denote the $R_0$-submodule of \textit{splines of degree at most} $d$ by 
    \[\mathcal{S}_d(G,\alpha) = \left( R_{G,\alpha} \right)_0 \oplus \left( R_{G,\alpha} \right)_1 \oplus \cdots \oplus \left( R_{G,\alpha} \right)_d\]
\end{definition}

The next result connects  quotient splines with splines over the quotient ring.  We will be most interested in the case when $I_{k+1} \subseteq R$ consists of all polynomials of degree at least $k+1$, in which case the following result will compare all splines of degree at most $k$ to the splines whose coefficients are reduced mod $I_{k+1}$. 

\begin{proposition} \label{proposition: quotient splines and reducing coeffs}
Suppose that $R$ is a graded ring and denote the collection of ring elements of degree zero by $R_0$.  Suppose that $(G,\alpha)$ is an edge-labeled graph and that $\alpha$ is a homogeneous edge-labeling.  

Let $I$ be a homogeneous ideal with quotient map $\pi\co R \rightarrow R/I$, and let $\mathcal{I} \subseteq R_{G,\alpha}$ be the collection of splines $\bp$ with $\bp_v \in I$ for all vertices $v$.  

Then $\mathcal{I}$ forms a homogeneous ideal in the graded ring of splines $R_{G,\alpha}$, and there is a unique ring homomorphism $\rho'$ so that the following diagram commutes: 
\[\begin{tikzcd}
    R_{G,\alpha} \arrow[r, "\pi_*"] \arrow[d, "\rho"] & \left(R/I\right)_{G,\pi \circ \alpha} \\
    \left(R_{G,\alpha}\right)/\mathcal{I} \arrow[ur,
    "\rho'"] & 
\end{tikzcd}\]
Moreover the ring homomorphism $\rho'$ is injective and graded.
\end{proposition}

\begin{proof}
First we confirm that $\mathcal{I}$ forms an ideal, and in fact that $\mathcal{I} = \ker \pi_*$.  Note  that the projection  
\[\begin{array}{rl}
\pi_v \co R_{G,\alpha} &\rightarrow R \\
\bp &\mapsto \bp_v
\end{array}\]
is a surjective ring homomorphism for each vertex $v$.  Indeed, the principal ideal $R{\bf 1}$ generated by the trivial spline surjects onto $R$ at each vertex. Moreover the following diagram commutes for each $v$ since $\pi_*$ and $\pi_v$ are both defined vertex-wise:
\[\begin{tikzcd}
R_{G,\alpha} \arrow[r,"\pi_*"] \arrow[d,"\pi_v"] & (R/I)_{G,\pi \circ \alpha} \arrow[d,"\pi_v"] \\
R \arrow[r,"\pi"] & R/I
\end{tikzcd}
\]
Note that $\mathcal{I}$ is defined as the intersection of the preimage of $I$ under all of the $\pi_v$, namely:
\[ \mathcal{I} =  \displaystyle \bigcap_{v \in V} \pi_v^{-1}(I).\]
This means $\mathcal{I}$ is the intersection of a finite number of ideals, and thus an ideal.  It also shows
\[ \pi \circ \pi_v(\mathcal{I}) = \pi_v \circ \pi_*(\mathcal{I}) = \{0\}\]
for all $v \in V$.  By definition the only spline $\bq \in (R/I)_{G,\pi \circ \alpha}$ with $\pi_v(\bq) = 0$ for all $v \in V$ is the zero spline.  Thus $\pi_*(\mathcal{I}) = \{\bf{0}\}$.  Conversely every such $\bq$ satisfies
\[\pi_v(\bq) \in \ker \pi = I\]
for all $v \in V$.  It follows that $\ker \pi_* = \mathcal{I}$ as desired.

Next we show that $\mathcal{I}$ is homogeneous in $R_{G,\alpha}$.  Proposition~\ref{proposition: grading on splines} showed that $R_{G,\alpha}$ is a graded ring and that its graded parts are defined by 
\[ \left(R_{G,\alpha}\right)_i = \bigcap_{v \in V} \pi_v^{-1}\left(R_i\right), \]
where $R_i$ is the $i^{th}$ graded part of $R$ for each $i$.  Write $\bp \in \mathcal{I}$ as a sum of homogeneous parts as
\[\bp = \bp_0 + \bp_1 + \bp_2 + \cdots + \bp_d,\]
where each $\bp_i$ is homogeneous of degree $i$ in $R_{G,\alpha}$.  We want to show that $\bp_i$ is in $\mathcal{I}$ for each $i$.  Consider $\pi_v(\bp)$ for each vertex $v$.  Since $\pi_v$ is a ring homomorphism, we have
\[\bp_v = (\bp_0)_v + (\bp_1)_v + \cdots + (\bp_d)_v.\]
By definition of $\mathcal{I}$, we know that $\bp_v \in I$ for all $v$.  By definition of the grading on splines, we know $(\bp_i)_v \in R_i$ is homogeneous of degree $i$ for all $0 \leq i \leq d$ and all $v$.  Since the ideal $I$ is homogeneous, we conclude that $(\bp_i)_v \in I$ for all $v$ and all $i$. Thus $\pi_v(\bp_i) \in I$ for all $v$ and all $i$.  In other words, we have $\bp_i \in \mathcal{I}$ for all $i$, so by definition $\mathcal{I}$ is homogeneous.

The grading induced on quotient rings by homogeneous ideals $(R_{G,\alpha})/\mathcal{I}$ is defined by
\[\left( (R_{G,\alpha})/\mathcal{I}\right)_i = \left(R_{G,\alpha}\right)_i + \mathcal{I}/\mathcal{I}.\]
Thus our proof of homogeneity also shows that 
\[\rho'(\bp_i+\mathcal{I}) = \pi_*(\bp_i).\]
Since the restriction of the spline $\pi_*(\bp_i)$ to each vertex $v$ is given by $(\bp_i)_v + I$ and $(\bp_i)_v$ is by construction homogeneous of degree $i$ in $R$, we conclude that $\pi_*(\bp_i)$ is homogeneous of degree $i$ according to the grading on splines over $R/I$ from Proposition~\ref{proposition: grading on splines}.

Thus $\rho'$ is an injective, graded ring homomorphism as desired.  
\end{proof}

It is natural to wonder when the map $\rho'$ is surjective.  The next lemma gives a condition that may seem contrived.  Indeed the only example we know of where it is useful is when $R$ is a polynomial ring, $I_{d+1} \subseteq R$ is the ideal generated by all monomials of degree at least $d+1$, and $\mathcal{I}_{d+1}$ is generated by all homogeneous splines of degree at least $d+1$.  (Indeed, this setting will be the main one in our Section~\ref{sec:applications} on applications.) 
However, the general language of the lemma makes the proof transparent.

\begin{lemma} \label{lemma: surjective map}
    Suppose $R$, $\alpha$, $I$, and $\mathcal{I}$ all satisfy the hypotheses of Proposition~\ref{proposition: quotient splines and reducing coeffs}.  Suppose that each graded part $R_i$ of the coefficient ring can be written as a direct sum
    \[R_i = J_i \oplus (R_i \cap I)\]
    for some $R_0$-module $J_i$ and that this extends to the $R_0$-submodule $\alpha(uv)$ for each edge $uv$ via
    \[\alpha(uv) \cap R_i = (\alpha(uv) \cap J_i) \oplus (\alpha(uv) \cap R_i \cap I)\]
    for all $i$.  Then the map $\pi_*\co R_{G,\alpha} \rightarrow (R/I)_{G,\pi \circ \alpha}$ is surjective and hence the map 
    \[\rho'\co (R_{G,\alpha})/\mathcal{I} \rightarrow (R/I)_{G,\pi \circ \alpha}\] is an isomorphism of graded rings.
\end{lemma}

\begin{proof}
Suppose that $\bq \in (R/I)_{G,\pi \circ \alpha}$ and assume that $\bq$ is homogeneous of degree $i$.  We will show that $\bq$ is in the image of $\pi_*$.  Surjectivity for general $\bq$ will then follow, since $(R/I)_{G,\pi \circ \alpha}$ is a graded ring and so every spline $\bq$ can be expressed as a sum of homogeneous splines, each of which we will have shown to be in the image of $\pi_*$.  Since $\pi_*$ is a homomorphism, we will thus conclude that every spline $\bq \in (R/I)_{G,\pi \circ \alpha}$ is in the image of $\pi_*$ as desired.

Assume $\bq$ is a homogeneous spline of degree $i$ in $(R/I)_{G,\pi \circ \alpha}$.  We first show that for each $v \in V$ the inverse image satisfies
\[\pi^{-1}(\bq_v)  = \bp_v + I\]
for a unique polynomial $\bp_v \in J_i$.  To do this, recall that the $i^{th}$ graded part of the quotient $R/I$ is $R_i+I/I$.  Thus $\bq_v \in R_i+I/I$ implies there is a polynomial $f_v \in R_i$ with 
\[\pi^{-1}(\bq_v) = f_v + I.\]
Now let $\bp_v$ be the unique element of $J_i$ with 
\[\bp_v - f_v \in I\]
guaranteed by our hypothesis $R_i = J_i \oplus (R_i \cap I)$.

We next show that $\bp$ is in fact a spline.  (If so, it is by construction homogeneous of degree $i$.) For each edge $uv$, note that 
\[\pi(\bp_u) - \pi(\bp_v) = \bp_u - \bp_v + I\]
by construction of $\bp$.  Since $\bq$ is a spline on the quotient ring, there is $g \in I$ for which 
\[\bp_u - \bp_v + g \in \alpha(uv).\]
Since $\alpha(uv)$ is homogeneous, we may assume $g$ is homogeneous of degree $i$ without loss of generality.  Now by the assumption that $\alpha(uv) \cap R_i$ respects the direct-sum decomposition, we conclude 
\[\bp_u - \bp_v \in \alpha(uv)\]
as well.  This is true for all edges $uv$, so $\bp \in R_{G,\alpha}$ and $\pi_*(\bp) = \bq$.  

Thus we have proven our claim that $\pi_*$ is surjective.  Moreover the map $\pi_*$ is the composition 
\[\pi_* = \rho' \circ \rho\]
and $\rho$ is surjective because it is a quotient map.  So $\rho'$ must also be surjective.  Proposition~\ref{proposition: quotient splines and reducing coeffs} showed that $\rho'$ is an injective homomorphism, so it is an isomorphism.
\end{proof}

\begin{corollary} \label{corollary: splines over quotients}
Suppose that $R$ is a graded ring and $(G,\alpha)$ an edge-labeled graph with homogeneous edge-labeling.
Suppose that $I_{d+1} \subseteq R$ is the ideal generated by all homogeneous elements of degree at least $d+1$, namely
\[I_{d+1} = R_{d+1} \oplus R_{d+2} \oplus R_{d+3} \oplus \cdots,\]
and that $\mathcal{I}_{d+1} \subseteq R_{G,\alpha}$ is the ideal of splines generated by the homogeneous splines of degree at least $d+1$.  Then the quotient splines $(R_{G,\alpha})/\mathcal{I}_{d+1}$ are isomorphic to the ring of splines reduced mod $I_{d+1}$.  Furthermore, taking $\mathcal{S}_d(G,\alpha)$ to be the $R_0$-submodule of splines of degree at most $d$ as in Definition~\ref{definition: homogeneity and degree d parts}, we have a commutative diagram 
\[\begin{tikzcd}
    \mathcal{S}_d(G,\alpha) \arrow[r, "\pi_*"] \arrow[d, "\rho"] & \left(R/I_{d+1}\right)_{G,\pi \circ \alpha} \\
    \left(R_{G,\alpha}\right)/\mathcal{I}_{d+1} \arrow[ur,
    "\rho'"] & 
\end{tikzcd}\]
in which all maps are graded $R_0$-module isomorphisms and $\rho'$ is an isomorphism of graded rings.
\end{corollary}

\begin{proof}
Whenever $I$ is a homogeneous ideal in a graded ring $S$, the quotient $S/I$ is a graded ring with graded parts
\[S/I = \bigoplus_{i \geq 0} (S_i+I)/I.\]
Suppose $I$ is generated by all homogeneous elements of degree at least $d+1$.  Then the intersection $S_i \cap I$ is empty or all of $S_i$, depending on whether $i < d+1$ or not.  So the quotient $(S_i+I)/I$ is either isomorphic to $S_i$ or $\{0\}$ as an $R_0$-module depending on whether $i<d+1$ or not.  We conclude that as additive groups and as $R_0$-modules, 
\begin{equation} \label{equation: quotient cong to subspace}
S/I \cong \bigoplus_{i=0}^d S_i.
\end{equation}

When $S = R_{G,\alpha}$ and $I = \mathcal{I}_{d+1}$, this gives us the isomorphism $\rho$ in the commutative diagram.  When $S=R$ and $I=I_{d+1}$, this gives us an isomorphism 
\[R/I_{d+1} \cong R_0 \oplus R_1 \oplus R_2 \oplus \cdots \oplus R_d.\]

The ideal $I_{d+1}$ satisfies the hypotheses of Lemma~\ref{lemma: surjective map} vacuously since 
\[R_i = R_i \oplus \{0\}\]
and $I_{d+1} \cap R_i$ is one of those two summands for each $i$.  For the same reason, the edge-labeling $\alpha$ also satisfies the hypotheses of Lemma~\ref{lemma: surjective map}.  Hence we conclude that $\rho'$ is a (graded) isomorphism of graded rings.  Since $\pi_* = \rho' \circ \rho$, this completes our proof.
\end{proof}

\begin{example} \label{remark: homogeneity assumption is needed}
We note that the hypothesis of homogeneity is also essential in Corollary~\ref{corollary: splines over quotients}.  To see this, we continue our analysis of the graph in Figure \ref{figure: example of non-homogenized graph}.

Consider the spline condition over the edge labeled $\langle x^2-1 \rangle$.  When we take coefficients in $\mathbb{C}[x]/I_2$, the edge-label becomes $-1$ and so all vertex-labelings satisfy the spline condition.  However, zero is the only polynomial with degree at most one that is divisible by $x^2-1$.  This means that only constant splines are in the image of the map from $\mathcal{S}_1(G,\alpha)$ to $(R/I_2)_{G,\pi \circ \alpha}$. 
\end{example}

The next result uses Corollary~\ref{corollary: splines over quotients} to show that the image of a homogeneous MGS under the quotient map is again a homogeneous MGS.  

\begin{corollary} \label{corollary: degree sequence of quotient MGS}
Let $R$ be a graded ring, and let $(G,\alpha)$ be an edge-labeled graph with homogeneous edge-labeling $\alpha$. Fix a nonnegative integer $d$.  Let $I_{d+1}$ be the ideal in $R$ generated by all homogeneous elements of degree at least $d+1$, and let $\mathcal{I}_{d+1}$ be the ideal in $R_{G,\alpha}$ generated by all homogeneous splines of degree at least $d+1$. 

Suppose $\mathcal{B}$ is an MGS for $R_{G,\alpha}$ consisting of homogeneous splines. Then the nonzero elements in $\pi_*(\mathcal{B})$ form a homogeneous MGS for $(R/I_{d+1})_{G,\pi \circ \alpha}$, and the degree sequence of $\pi_*(\mathcal{B})$ consists of the first $d+1$ terms of the degree sequence of $\mathcal{B}$.
\end{corollary}

\begin{proof}
Write $\mathcal{B}_d$ for the elements of $\mathcal{B}$ that have degree at most $d$.  Every element of $\mathcal{B}$ is homogeneous, so the $R$-linear combinations of elements of $\mathcal{B}$ that are not in $\mathcal{B}_d$ have degree at least $d+1$ and hence are in the set-theoretic complement of $\mathcal{S}_d(G,\alpha)$. Thus  $\mathcal{B}_d$ generates $\mathcal{S}_d(G,\alpha)$ and hence $\rho(\mathcal{B}_d)$ generates $(R_{G,\alpha})/\mathcal{I}_{d+1}$ under the isomorphism $\rho$ of Corollary~\ref{corollary: splines over quotients}.

If $\mathcal{B}_d$ were not an MGS for $\mathcal{S}_d(G,\alpha)$, then we could find an MGS $\mathcal{B}' \subseteq \mathcal{S}_d(G,\alpha)$ of strictly smaller cardinality than $\mathcal{B}_d$ that also generated $\mathcal{S}_d(G,\alpha)$.  Replacing the elements of $\mathcal{B}_d$ in $\mathcal{B}$ with those of $\mathcal{B}'$ would give a generating set for $R_{G,\alpha}$ of strictly smaller cardinality than $\mathcal{B}$.  This contradicts the hypothesis that $\mathcal{B}$ is an MGS.

Finally, the degrees of $\mathcal{B}_d$ in $\mathcal{S}_d(G,\alpha) \subseteq R_{G,\alpha}$ are the first $d+1$ terms of the degree sequence of $\mathcal{B}$ by definition of $\mathcal{B}_d$.  Since the maps $\rho$ and $\rho'$ are degree-preserving isomorphisms, we know that the same is true for $\rho(\mathcal{B}_d)$ and $\pi_*(\mathcal{B}_d)$ as well.  This proves the claim.
\end{proof}

\subsection{Splines over polynomial rings and homogenization}\label{sec:homog}
The case most relevant to classical splines is when $R$ is a polynomial ring and $I_{k+1}$ is the ideal generated by all monomials of degree at least $k+1$. This is also the case on which our examples focused.  

What we address in this section is non-homogeneous edge-labelings, which are typical in applications of classical splines. We also describe \textit{homogenization}, an algebraic process to transform a non-homogeneous polynomial into a homogeneous polynomial of the same degree by inserting a new variable.  Our main result shows that the process of homogenization creates an isomorphic module of splines over the original polynomial ring.  Thus all of our results about module bases---including minimal generating sets---hold in a reasonable sense for non-homogeneous splines.  The main subtlety is that the dimension  as a \textit{vector space} may change if we do not carefully keep track of the additional variable.

{\bf In this section, we use $\Bbbk$ to denote a field of characteristic zero, in practice usually $\mathbb{R}$ or $\mathbb{C}$, and denote the polynomial ring \[R^n := \Bbbk[x_1,x_2,\ldots,x_n].\]}

\begin{remark}
    If $\Bbbk$ had finite characteristic, then the definitions in this section would still make sense but specific analyses would be more complicated; for instance, the kernel of the evaluation map $e^{n+1}$ defined in Proposition~\ref{proposition: evaluation homomorphism on polynomial rings} below would be larger.  Understanding the case of finite characteristic remains an open question.
\end{remark}

We start by defining underlying terminology and then define homogenization of polynomials and splines.  Homogenizing polynomials is common in algebraic geometry, where it is used to associate a projective variety to an affine variety; see, e.g., \cite[Section 8.4]{CLO15} or \cite[Section 3.3]{SKKT00}.  For examples of the use of homogenization of polynomials in the theory of splines see, e.g., \cite{BilRos91} or \cite{DS20}.

\begin{definition}\label{definition: homogenization}
Monomials in $R^n$ are in bijective correspondence with vectors $\Vec{v} \in \mathbb{N}^n$ where $\mathbb{N}$ denotes the nonnegative integers, according to the rule $x^{\Vec{v}} = x_1^{v_1}x_2^{v_2} \cdots x_n^{v_n}$.
  The \textit{degree} of a monomial $x^{\Vec{v}}$ in $R^n$ is
    \[\deg x^{\Vec{v}} = \sum_{i=1}^n v_i.\]
    The \textit{degree} of a polynomial $f \in R^n$ is the maximal degree of its nonzero monomials; namely, if $f(x_1, \ldots, x_n) = \sum_{\Vec{v} \in \mathbb{N}^n} c_{\Vec{v}}x^{\Vec{v}}$ with only a finite number of nonzero coefficients $c_{\Vec{v}} \in \Bbbk$ then 
    \[ \deg(f) = \max_{c_{\Vec{v}} \neq 0} \{\deg x^{\Vec{v}}\}.\]
Given a polynomial $f \in R^n$,
 its \textit{homogenization} $\widetilde{f} \in R^{n+1}$ is the polynomial
\[\widetilde{f}(x_1, \ldots, x_n, x_{n+1}) = \sum_{\Vec{v} \in \mathbb{N}^n} c_{\Vec{v}}x^{\Vec{v}}x_{n+1}^{\deg(f) - \deg(x^{\Vec{v}})} = \sum_{\scriptsize \begin{array}{cc} \Vec{v} \in \mathbb{N}^{n}: \\ \sum_i v_i \leq \deg f \end{array}} c_{\Vec{v}}x^{(v_1, v_2, \ldots, v_n, \deg(f) - (\sum_i v_i))}.\] 
We note that if $f$ were already homogeneous, then the exponent of $x_{n+1}$ in the middle of the above displayed equation is zero; thus, homogeneous $f$ are unchanged by homogenization. The homogenization of an ideal $I$ of $R^n$ is the ideal generated by the homogenizations of all elements of $I$.

    The degree of a spline $\bp \in R^n_{G,\alpha}$ was defined in the fourth bullet point of Section~\ref{sec: quotient splines} (for the current case under consideration, i.e. when the coefficient ring is a polynomial ring).
    Given a spline $\bp \in R^n_{G,\alpha}$ over the polynomial ring, its \textit{homogenization} $\widetilde{\bp}$ is defined vertex-wise by
\[(\widetilde{\bp})_v = \widetilde{(\bp_v)}x_{n+1}^{\deg(\bp) - \deg(\bp_v)}.\]
Given an edge-labeling $\alpha$ for the graph $G$, its \textit{homogenization} $\widetilde{\alpha}$ is defined at all edges $uv$ by the rule that $\widetilde{\alpha}(uv)$ is the smallest ideal containing \[\{\widetilde{f} \mid f \in \alpha(uv)\};\]
equivalently, $\widetilde{\alpha}$ is obtained by homogenizing all of the edge-labeling ideals.
\end{definition}

\begin{remark} 
The homogenizations of polynomials, splines, and edge-labelings are similar but have several differences.
\begin{itemize}
    \item The homogenization $\widetilde{f}$ of any polynomial is a homogeneous polynomial of the same degree as $f$. Indeed, the homogenization of a polynomial $f$ of degree $d$ can be defined as
\[\widetilde{f}(x_1,x_2,\ldots,x_{n+1}) = x_{n+1}^d f(x_1/x_{n+1}, x_2/x_{n+1}, \ldots, x_n/x_{n+1}).\]
\item The degree of the homogenization $\widetilde{\bp}$ is the degree of the original spline $\bp$ but the homogenization of a spline $\bp$ may change the degree of some of the polynomials $\bp_u$.  
\item Note that the homogenization of any ideal is a homogeneous ideal since it can be generated by homogeneous polynomials. This means that $\widetilde{\alpha}$ is a homogeneous edge-labeling, as our terminology suggests.
\end{itemize}
\end{remark}

\begin{example}
The polynomial $x^2-1$ has degree two, so its homogenization is $x^2-y^2$. Consider the (non-homogeneously) edge-labeled graph $(G,\alpha)$ of Figure~\ref{figure: example of non-homogenized graph}.  The polynomials $\bp_u = x$ and $\bp_v = x+x^2-1$ define a spline $\bp \in R^2_{G,\alpha}$.  Note that $\bp_u$ is homogeneous, so its homogenization is $\widetilde{\bp_u} = x = \bp_u$.  However the homogenization $\widetilde{\bp}$ of the spline has $\widetilde{\bp}_u = xy$ and $\widetilde{\bp}_v = xy+x^2-y^2$.
\end{example}

 For polynomials over an integral domain, degree respects multiplication in the sense that
 \[\deg(fg) = \deg(f)+\deg(g)\]
 for all polynomials $f, g$.  It follows that homogenization respects multiplication in the sense that
 \[\widetilde{fg} = \widetilde{f}  \widetilde{g}.\]
However homogenization is not always additive since, e.g. 
$\widetilde{x^2+y} + \widetilde{-x^2+x}$
has degree two so is not the homogenization of the sum $x^2+y-x^2+x = y+x$.

Nonetheless we have an evaluation map $e^{n+1}\co R^{n+1} \rightarrow R^n$ that is both a ring homomorphism and the inverse map of homogenization, as follows. We state the following result for convenience; its proof can be found in undergraduate algebra texts.

\begin{proposition} \label{proposition: evaluation homomorphism on polynomial rings}
    Let $e^{n+1}\co R^{n+1} \rightarrow R^n$ be the evaluation ring homomorphism defined by 
        \[ e^{n+1}(x_i) = \left\{ \begin{array}{rl} x_i & \textup{ if } 1 \leq i \leq n, \\ 1 & \textup{ if } i=n+1. \end{array} \right.\] 
Then $e^{n+1}$ is surjective, inverts the homogenization map in the sense that
    \[e^{n+1} \left( \widetilde{f} \right) = f, \]
    and is degree-preserving on homogenized polynomials in the sense that
    \[\deg\left(e^{n+1}(\widetilde{f})\right) = \deg(\widetilde{f})=\deg(f).\]
    In particular, if $f \in (R^n)_d \subseteq R^{n+1}$ is homogeneous of degree $d$, then $e^{n+1}(f)=f$.
    Moreover the kernel of the map $e^{n+1}$ is the principal ideal
    \[\ker e^{n+1} = (x_{n+1}-1)R^{n+1}. \]
\end{proposition}

In particular, observe that the only homogeneous element of $\ker e^{n+1}$ is $0$.

We now show that the homogenization of a spline is in fact a spline on the homogenized edge-labeling and describe the induced map $e^{n+1}_*$ from Proposition~\ref{proposition: surjective mostly functoriality} in this case.

\begin{proposition} \label{proposition: evaluation map is surjective}
For each spline $\bp \in R^n_{G,\alpha}$ the homogenization $\widetilde{\bp}$ is a spline in $R^{n+1}_{G,\widetilde{\alpha}}$. The ring homomorphism induced by $e^{n+1}$ on the homogenized splines $R^{n+1}_{G,\widetilde{\alpha}}$ has image $R^n_{G,\alpha}$, namely
\[e^{n+1}_*\co R^{n+1}_{G,\widetilde{\alpha}} \rightarrow R^n_{G,\alpha}\]
is surjective.  Moreover $e^{n+1}_*$ inverts the homogenization map in the sense that 
\[e^{n+1}_*(\widetilde{\bp}) = \bp,\]
and $e^{n+1}_*$ preserves degree of homogenized splines in the sense that
\[\deg(e^{n+1}_*(\widetilde{\bp})) = \deg(\widetilde{\bp}) = \deg(\bp).\]
The kernel of $e^{n+1}_*$ is the ideal $(x_{n+1}-1) R^{n+1}_{G,\widetilde{\alpha}}$ inside the ring $R^{n+1}_{G,\widetilde{\alpha}}$.
\end{proposition}

\begin{proof}
First we show that $e^{n+1} \circ \widetilde{\alpha} = \alpha$ for each edge. If $f \in \alpha(uv)$ then by definition of the homogenized edge-labeling, we know $\widetilde{f} \in \widetilde{\alpha}(uv)$.   Proposition~\ref{proposition: evaluation homomorphism on polynomial rings} says $e^{n+1}(\widetilde{f}) =f$ so we conclude $e^{n+1}(\widetilde{\alpha}(uv)) \supseteq \alpha(uv)$ for all edges $uv$.  Now we show the opposite inclusion.  Let $\sum g_i \widetilde{f_i} \in \widetilde{\alpha}(uv)$ be arbitrary, in the sense that $g_i \in R^{n+1}$ and $f_i \in \alpha(uv)$ are arbitrary for each $i$.  Then 
\[e^{n+1}\left(\sum g_i \widetilde{f_i}\right) = \sum e^{n+1}(g_i) f_i\]
by Proposition~\ref{proposition: evaluation homomorphism on polynomial rings}.  Since $e^{n+1}\co R^{n+1} \rightarrow R^n$ is surjective, we conclude that $e^{n+1}(\widetilde{\alpha}(uv)) \subseteq \alpha(uv)$ for all edges $uv$ and so $e^{n+1} \circ \widetilde{\alpha} = \alpha$ as desired.

Next we show that the induced map $e^{n+1}_*$ is surjective.  Our strategy is to show that for each spline $\bp \in R^n_{G,\alpha}$ the homogenization $\widetilde{\bp}$ is a spline in $R^{n+1}_{G,\widetilde{\alpha}}$.  For each edge $uv$, the difference $\widetilde{\bp}_u - \widetilde{\bp}_v $ is a homogeneous polynomial.  Moreover the difference
\[\bp_u - \bp_v = f \in \alpha(uv),\]
since $\bp \in R^n_{G,\alpha}$ is a spline.  If $f = 0$ then $\bp_u = \bp_v$, so $\widetilde{\bp}_u = \widetilde{\bp}_v$ and the spline condition is satisfied at edge $uv$. Thus assume $f \neq 0$.  We know $\deg f \leq \max \{ \deg \bp_u, \deg \bp_v\}$ which means there is a nonnegative $i \geq 0$ so that
\[ (\widetilde{\bp}_u - \widetilde{\bp}_v) - x_{n+1}^i \widetilde{f}\]
is homogeneous in $R^{n+1}$.  By construction we also know
that
\[ e^{n+1}\left((\widetilde{\bp}_u - \widetilde{\bp}_v) - x_{n+1}^i \widetilde{f}\right) = e^{n+1}\left(\widetilde{\bp}_u - \widetilde{\bp}_v\right) - e^{n+1}\left(x_{n+1}^i \widetilde{f}\right) = f - f = 0.\]
Proposition~\ref{proposition: evaluation homomorphism on polynomial rings} implies that the only homogeneous polynomial in $\ker e^{n+1}$ is $0$, so we conclude that 
\[ \widetilde{\bp}_u - \widetilde{\bp}_v =  x_{n+1}^i \widetilde{f} \in \widetilde{\alpha}(uv)\]
as desired.  Thus $\widetilde{\bp}$ is a spline in  $R^{n+1}_{G,\widetilde{\alpha}}$ and $e^{n+1}_*$ is surjective.

For the rest of the proof, we use the fact that $e^{n+1}_*$ is defined vertex-wise and so commutes with the projection $\pi_v$ to each vertex in the sense that 
\[\pi_v \circ e^{n+1}_* = e^{n+1} \circ \pi_v.\]
Since $e^{n+1}$ inverts the homogenization map on polynomials it follows that $e^{n+1}_*$ does, too, by commuting with $\pi_v$.  When homogenizing splines, there is at least one vertex $v$ with $
\pi_v\left(\widetilde{\bp}\right) = \widetilde{\pi_v(\bp)}$,
and for all other vertices $u$ there is an integer $d_u \geq 0$ with
\[\pi_u\left(\widetilde{\bp}\right) =  x_{n+1}^{d_u} \widetilde{\pi_u(\bp)}.\]
We conclude that for this vertex $v$ we have
\[\deg\left(\pi_v(\widetilde{\bp})\right) = \deg(\widetilde{\pi_v(\bp})) = \deg(\pi_v(\bp)),\]
and for all other vertices $u$ we have 
\[\deg\left(\pi_u(\widetilde{\bp})\right) \geq \deg(\pi_u(\bp)).\]
Since $\widetilde{\bp}$ is homogeneous, the definition of degree of splines implies that
\[\deg \widetilde{\bp} = \deg e^{n+1}_*(\widetilde{\bp}) = \deg \bp\]
so $e^{n+1}_*$ preserves degree of homogenized splines.

Finally we confirm the kernel of $e^{n+1}_*$ is as claimed.  Suppose that $\bp \in \ker e^{n+1}_*$.  Since $\widetilde{\alpha}$ is a homogeneous edge-labeling, we can write \[\bp = \bp_0 + \bp_1 + \cdots + \bp_d\] as a sum of splines $\bp_i \in R^{n+1}_{G,\widetilde{\alpha}}$ that are homogeneous of degree $i$ (though not necessarily in the kernel).   Now consider the sum of splines 
\[\bq := \bp + \sum_{i=0}^{d-1}\left(x_{n+1}^{d-i} - 1\right) \bp_i = \sum_{i=0}^{d} x_{n+1}^{d-i} \bp_i.\]
Note that $\bq$ is 
\begin{itemize}
    \item in the kernel of $e^{n+1}_*$ because each summand on the left-hand side is, and
    \item is homogeneous of degree $d$ because each summand on the right-hand side is.
\end{itemize}
Again use the commuting maps 
\[\pi_v \circ e^{n+1}_* = e^{n+1} \circ \pi_v\]
to conclude that for each vertex, the image $\pi_v(\bq)$ is a homogeneous polynomial of degree $d$ that is in the kernel of $e^{n+1}$.  The only homogeneous polynomial in the kernel of $e^{n+1}$ is $0$, so the image $\pi_v(\bq) = 0$ for all vertices $v$.  Thus the spline $\bq = 0$ and hence 
\[\bp =  \sum_{i=0}^{d-1}\left(1 - x_{n+1}^{d-i} \right) \bp_i =  \sum_{i=0}^{d-1}\left((1 - x_{n+1})(1+x_{n+1} + x_{n+1}^2 + \cdots + x_{n+1}^{d-i-1}) \right) \bp_i,\]
which is in $(x_{n+1}-1)R^{n+1}_{G,\widetilde{\alpha}}$.  Conversely, direct computation shows that
\[e^{n+1}_*\left((x_{n+1}-1)\bq\right) = (1-1)e^{n+1}_*(\bq) = 0\]
for all $\bq \in R^{n+1}_{G,\widetilde{\alpha}}$, so the kernel is as claimed.  This completes the proof.
\end{proof}

This gives our main result: that the map $e^{n+1}_*$  restricts to a degree-preserving $R^n$-module isomorphism between (non-homogenized) splines over $R^n$ and a natural $R^n$-submodule of the homogenized splines over $R^{n+1}$.

\begin{corollary} \label{theorem: homogenizing splines}
Suppose that $\mathcal{B}$ is a homogeneous MGS in $R^{n+1}_{G,\widetilde{\alpha}}$, and let $R^n(\mathcal{B})$ denote 
the collection of linear combinations of $\mathcal{B}$ with coefficients in $R^n$.

The restricted map $e^{n+1}_*\co R^n(\mathcal{B}) \rightarrow R^n_{G,\alpha}$ is an  $R^n$-module isomorphism.  In particular, the number of elements in a homogeneous MGS for $R^{n+1}_{G,\widetilde{\alpha}}$ is the same as the number of elements in an MGS for $R^n_{G,\alpha}$.

Suppose, in addition, that for each edge $uv$ the ideal $\alpha(uv)$ is principal.  Then $e^{n+1}_*$  preserves degree of the generators $\mathcal{B}$.  
\end{corollary}

\begin{proof}
We begin by showing that $e^{n+1}_*\left( R^n(\mathcal{B})\right)$ contains all of $R^n_{G,\alpha}$. Proposition~\ref{proposition: evaluation map is surjective} showed that $e^{n+1}_*\co R^{n+1}_{G,\widetilde{\alpha}} \rightarrow R^n_{G,\alpha}$ is surjective, so for each $\bp \in R^n_{G,\alpha}$ we may pick an element $\bq \in \left(e^{n+1}_*\right)^{-1}(\bp)$. Every spline in $R^{n+1}_{G,\widetilde{\alpha}}$ can be written as an $R^{n+1}$-linear combination  of the elements $\bb^i$ in the MGS $\mathcal{B}$, so write
 \[\bq = \sum_i f_i \bb^i\]
for some polynomials $f_i \in R^{n+1}$.  Now apply $e^{n+1}$ to get polynomials $g_i = e^{n+1}(f_i)$ in $R^n$.  Consider the image of $\sum_i g_i \bb^i $ under the module homomorphism $e^{n+1}_*$ and expand as follows:
\[e^{n+1}_*\left( \sum_i g_i \bb^i \right) = \sum_i e^{n+1}(g_i) e^{n+1}_*(\bb^i).\] 
Since each $g_i \in R^n$ we have $e^{n+1}(g_i) = g_i$.  Substituting the definition $g_i = e^{n+1}(f_i)$, we get
\[ \sum_i e^{n+1}(g_i) e^{n+1}_*(\bb^i) = \sum_i e^{n+1}(f_i) e^{n+1}_*(\bb^i). \]
But this is simply $e^{n+1}_*(\bq)$, which is $\bp$ by definition.  In other words, every $\bp \in R^n_{G,\alpha}$ is in the image of $R^n(\mathcal{B})$ under $e^{n+1}_*$.

Suppose that $\mathcal{C}$ is an MGS for $R^n_{G,\alpha}$ and denote the homogenization of all of its elements by $\widetilde{\mathcal{C}}$.  By definition $R^n(\mathcal{C})$ is all of $R^n_{G,\alpha}$.  Moreover the map $e^{n+1}_*$ is an isomorphism of $R^n$-modules
\[R^n(\mathcal{C}) \cong R^n(\widetilde{\mathcal{C}}).\]
It is surjective because the image $e^{n+1}_*(R^n(\widetilde{\mathcal{C}}))$ must contain all of $R^n(\mathcal{C})$.  We deduce injectivity as follows.  Suppose $\sum f_i \widetilde{\bp_i}$ is an element of the kernel when $x_{n+1}$ is evaluated at $1$.  Then $\sum f_i \bp_i = 0$ is a relation satisfied by the original elements ${\bp_i} \in {\mathcal{C}}$.

Now consider the quotient $R^{n+1}_{G,\widetilde{\alpha}}/\ker e^{n+1}_*$.  By the first isomorphism theorem, it is isomorphic to the image $R^n_{G,\alpha}$.  We just confirmed that $R^n{(\widetilde{\mathcal{C}})}$ gives a complete set of coset representatives.  So $R^n{(\widetilde{\mathcal{C}})}$ generates the quotient.  

We claim that $R^{n+1}(\widetilde{\mathcal{C}})$ is all of $R^{n+1}_{G,\widetilde{\alpha}}$.  Suppose $\bq \in \ker e^{n+1}_*$ is a minimal-degree element not generated by the $\widetilde{\mathcal{C}}$.  Then $\bq = (x_{n+1}-1)\bq_1$ for some $\bq_1 \in R^{n+1}_{G,\widetilde{\alpha}}$ of strictly smaller degree than $\bq$.  Since $\bq_1 \in R^{n+1}_{G,\widetilde{\alpha}}$ we may multiply by $(x_{n+1}-1)$ to conclude that $\bq$ is, too.

So $\widetilde{\mathcal{C}}$ is a homogeneous generating set for $R^{n+1}_{G,\widetilde{\alpha}}$.  If $\mathcal{B}$ had smaller cardinality than $\mathcal{C}$ then $e^{n+1}_*(\mathcal{B})$ would be a smaller generating set for $R^n_{G,\alpha}$ than $\mathcal{C}$, which contradicts the definition of MGS.  But $\mathcal{B}$ can not be larger than $\widetilde{\mathcal{C}}$ since $\mathcal{B}$ is an MGS.  Thus any MGS of $R^n_{G,\alpha}$ has the same cardinality as a homogeneous MGS for $R^{n+1}_{G,\widetilde{\alpha}}$ and so $e^{n+1}_*$ must be injective on $R^n(\mathcal{B})$.

Finally suppose that for each edge $uv$ the ideal $\alpha(uv)$ is principal.  We note first that if $\bp \in R^{n+1}_{G,\widetilde{\alpha}}$ and $x_{n+1}|\bp_u$ for all vertices $u$, then in fact $x_{n+1}|\bp$.
Indeed, let $uv$ be an edge and suppose $\alpha(uv)$ is generated by the polynomial $p_{uv} \in R^n$.  Since $\bp$ is a spline we have
\[\bp_u - \bp_v = f \widetilde{p_{uv}}\]
for some polynomial $f \in R^{n+1}$ and since $x_{n+1}$ divides each $\bp_u$ we know that $x_{n+1}$ divides the righthand side of this equation.  The definition of homogenization implies that at least one term in $\widetilde{p_{uv}}$ is not divisible by $x_{n+1}$.  The coefficient ring $R^{n+1}$ is an integral domain so the degree of every product is additive.  In particular, all terms in $f$ must be divisible by $x_{n+1}$ in order to guarantee that all terms in the product $f \widetilde{p_{uv}}$ are divisible by $x_{n+1}$.  Thus $x_{n+1} | f$.  We thus have polynomials $\bq_u$ and $g$ with $f = x_{n+1}g$ and $\bp_u = x_{n+1} \bq_u$ for all $u$ and furthermore have
\[\bq_u - \bq_v = g \widetilde{p_{uv}},\]
so $\bq \in R^{n+1}_{G,\widetilde{\alpha}}$ is a spline.

We may thus assume that for each spline $\bb$ in $\mathcal{B}$, there is at least one vertex $u$ such that $\bb_u$ is a nonzero polynomial in $R^n$.  Otherwise, by the previous paragraph, we could write $\bb = x_{n+1} \bq$ for some homogeneous spline $\bq$ and then replace $\bb$ by $\bq$ in the MGS $\mathcal{B}$.  Hence $e^{n+1}_*$ preserves the degree of the splines in $\mathcal{B}$.
\end{proof}

We now describe classical spline theory as it relates to our constructions.

\section{Applications}
\label{sec:applications}

The purpose of this section is to connect the theoretical apparatus of the previous parts of the paper to the spline theory arising in applications.  In particular, we show that our hypothesis on the homogeneity of polynomial splines in Section~\ref{sec:cycles} is a very common one in applications.  In fact, it is satisfied by all graphs arising from GKM constructions of equivariant cohomology and by most applications involving classical splines. We then use our results to recover well-known results that classify classical splines on ``pinwheel" triangulations in the plane.

\subsection{Splines in equivariant cohomology: GKM theory} \label{section: GKM summary}
GKM theory is the name given to an algebraic combinatorial approach to studying torus-equivariant cohomology of certain algebraic varieties by restricting to the torus-fixed points.  We omit details and instead refer to the original source on GKM theory by Goresky, Kottwitz, and MacPherson \cite{GKM98} and to surveys like \cite{T16b} (for a spline-centric approach to GKM theory) or \cite{Tym05}.  We use this subsection only to summarize the key points.

GKM theory gives conditions on algebraic varieties $X$ with an action of a torus $T \cong \left(\mathbb{C}^*\right)^n$ under which we can construct an edge-labeled graph $(G_X,\alpha_X)$ giving an isomorphism between $T$-equivariant cohomology and splines
\[H^*_T(X; \mathbb{C}) \cong R_{G_X, \alpha_X}\]
over the ring $R = \mathbb{C}[t_1,\ldots,t_n]$.  The map is an isomorphism both of rings and of $\mathbb{C}[t_1,\ldots,t_n]$-modules. 

The topological conditions on the torus actions have the following consequence for the graphs that arise in GKM theory.

\begin{proposition} \label{proposition: GKM implies homogeneous}
All edge-labeled graphs $(G_X, \alpha_X)$ that arise from GKM constructions have principal ideals generated by homogeneous polynomials of degree one.
\end{proposition}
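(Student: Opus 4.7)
The plan is to derive the claim directly from the representation-theoretic description of the edge-labels, since by construction $\alpha_X(uv)$ is the principal ideal generated by the \emph{weight} of $T$ on the one-dimensional orbit whose closure joins $u$ and $v$. Thus the content of the proposition is that every such weight is a homogeneous linear form in $t_1,\ldots,t_n$.

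First I would recall the identification used throughout GKM theory between characters of the torus and degree-one elements of $H^\ast_T(\mathrm{pt})$. Concretely, the character group $\mathrm{Hom}(T,\mathbb{C}^\ast)$ of $T\cong(\mathbb{C}^\ast)^n$ is a free abelian group of rank $n$, and the isomorphism $H^\ast_T(\mathrm{pt})\cong \mathbb{C}[t_1,\ldots,t_n]$ (with $t_i$ of cohomological degree two) identifies this character lattice with the $\mathbb{C}$-span of $t_1,\ldots,t_n$, i.e.\ with the homogeneous linear forms. This is the only nontrivial input and is standard; I would cite it to \cite{GKM98} or \cite{Tym05} rather than reprove it.

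Next I would explain why the weight attached to a one-dimensional orbit is in fact such a character. Fix a one-dimensional $T$-orbit $\mathcal{O}\subset X$ whose closure is a $T$-stable $\mathbb{P}^1$ joining the two fixed points $u$ and $v$. The action of $T$ on the tangent line $T_u\overline{\mathcal{O}}$ is given by a character $\chi_{uv}\in\mathrm{Hom}(T,\mathbb{C}^\ast)$, and this is precisely the weight that labels the edge $uv$ in the GKM construction. Under the identification above, $\chi_{uv}$ corresponds to a homogeneous degree-one element of $\mathbb{C}[t_1,\ldots,t_n]$.

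Finally I would conclude: since $\alpha_X(uv)=\langle\chi_{uv}\rangle$ and $\chi_{uv}$ is a homogeneous linear form in the $t_i$, every edge-label is a principal ideal generated by a homogeneous polynomial of degree one, as claimed. The only real subtlety is justifying the degree conventions (the doubling that makes each $t_i$ live in degree two rather than degree one cohomologically); I would address this with a brief remark that ``degree one'' in the statement refers to polynomial degree in the generators of $\mathbb{C}[t_1,\ldots,t_n]$, matching the convention used in Section~\ref{sec:cycles}. No serious obstacle arises, since the result is essentially a repackaging of the definition of an edge-label in the moment-graph description.
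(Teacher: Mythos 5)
Your argument is correct and matches what the paper has in mind: the paper states this proposition without a formal proof, treating it as immediate from the definition of $\alpha_X$ together with the standard identification of torus characters with homogeneous linear forms in $t_1,\ldots,t_n$ (cf.\ the paper's informal remark that the weight of the torus action on a one-dimensional orbit ``is given by a line in the $t_i$''). Your writeup simply makes that implicit reasoning explicit, including the appropriate clarification about polynomial degree versus cohomological degree.
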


In addition, the quotient construction of Section~\ref{sec: quotient splines}, and especially Corollary~\ref{theorem: homogenizing splines}, arises naturally in GKM theory as follows.

\begin{proposition} \label{proposition: GKM and quotient splines}
Suppose that $X$ is an algebraic variety and $T$ is a torus acting on $X$ so that $T$ and $X$ satisfy the topological conditions of GKM theory.  Let $I_1$ denote the (homogeneous) ideal of polynomials in $\mathbb{C}[t_1,\ldots,t_n]$ generated by the variables $t_1, t_2, \ldots, t_n$.  Then the ordinary cohomology of $X$ can be written as
\[H^*(X) \cong H^*_T(X)/I_1H^*_T(X).\]
\end{proposition}

\subsection{Classical results on splines}\label{section: classical spline summary}
The remainder of this article deals with classical splines.  Classical splines are defined as piecewise polynomials on a particular form of geometric decomposition of a space (triangulation, polyhedral, etc.), usually restricted to degree at most $d$ and differentiability at least $r$.  For our purposes, it is sufficient to take $\Delta$ to be a finite $n$-dimensional simplicial complex embedded in $\R^n$ with set of $n$-dimensional simplices $\{\sigma_v\}$.  We view $\Delta$ as both an abstract set of simplices and as a subset of $\R^n$, depending on context.
\begin{definition} \label{definition: classical splines}
Let $r$ and $d$ be nonnegative integers. The \emph{space of splines} $S^r_d(\Delta)$ is the $\mathbb{R}$-vector space defined by the property that $F \in S^r_d(\Delta)$ if and only if $F\colon \Delta \rightarrow \R$ is a function that
\begin{itemize}
    \item has degree at most $d$ in the sense that each restriction $F|_{\sigma_v}$ is a polynomial of degree at most $d$, and
    \item is continuously differentiable of order $r$ as a function defined on a subspace of $\R^n$, namely $F$ is in $\mathcal{C}^r$.
\end{itemize} 
\end{definition}

The splines we consider elsewhere in this paper are a dualization of the classical splines defined in Definition~\ref{definition: classical splines}, as follows.

\begin{definition} \label{definition: dual and dualizing map}
Suppose $\Delta$ is an $n$-dimensional simplicial complex with $n$-simplices $\{\sigma_v\}$.

\begin{itemize}
\item The {\em dual graph} $G_{\Delta}$ is the graph whose vertex set $V_{\Delta}$ is indexed by the collection of $n$-dimensional simplices $\sigma_v \in \Delta$ and whose edge set $E_{\Delta}$ contains the edge $uv$ whenever the corresponding $n$-simplices intersect in an $(n-1)$-dimensional simplex $\sigma_u \cap \sigma_v$.  
\item The {\em dual edge-labeling} $\alpha_{\Delta}$ is the edge-labeling in which $uv$ is labeled by the principal ideal $\alpha_{\Delta}(uv)$  generated by any nonzero affine linear form in $\mathbb{R}[x_1,\ldots,x_n]$ that vanishes on $\sigma_u \cap \sigma_v$. 
\item The {\em dual map} is a map from elements of $S^r_d(\Delta)$ to functions on the vertex set $V_{\Delta}$ of $G_{\Delta}$.  If $F \in S^r_d(\Delta)$, then the dual map sends $F$ to the function $F^*\co V_{\Delta} \rightarrow \mathbb{R}[x_1, \ldots, x_n]$ defined by $F^*(v) = F|_{\sigma_v}$ for all vertices $v$.
\end{itemize}
\end{definition}

Note that if $\Delta$ is a triangulation embedded in the plane, we may consider its vertices and edges as a planar graph, in which case $G_{\Delta}$ is the usual (combinatorial) dual graph to $\Delta$.  In this case, the dual edge-labeling $\alpha_{\Delta}$ is the function that assigns to each edge $uv$ in $G_{\Delta}$ the ideal generated by the equation of the line at the intersection of the triangles corresponding to $u$ and $v$. We note that the edge-labeling of Figure \ref{figure: example of non-homogenized graph} cannot occur in a graph $G_{\Delta}$ dual to a standard triangulation $\Delta$ because all edge-labels $\alpha_{\Delta}(uv)$ have the form $(ax+by+c)^{r+1}$ for some $a, b, c \in \mathbb{R}$.

Billera proved that the dual map is actually an isomorphism of vector spaces between classical splines and splines as defined in Definition~\ref{def:generalizedspline}.  In other words, the splines used in this paper are a kind of dualization of classical splines. We describe Billera's result in Proposition~\ref{proposition: billera result} below.  We do not give a formal definition of {\em simplex}, {\em strongly connected}, or {\em link} and instead refer the interested reader to either \cite{Bil88} or any introductory text on polytopes (see, for example, \cite{Zie95}). Recall also that for principal ideals, a power of an ideal is equal to the ideal generated by that power of the generator.

\begin{proposition}[{Billera \cite[Theorem 2.4]{Bil88}}] \label{proposition: billera result}
Suppose $\Delta$ is a strongly-connected $n$-dimensional simplicial complex so that the link of each simplex in $\Delta$ is also strongly connected.  

Define the $(r+1)^{\text{th}}$-power of $\alpha_\Delta$ to be the edge-labeling $\alpha_{\Delta}^{r+1}$  that associates to each edge $uv$ the ideal $\left( \alpha_{\Delta}(uv) \right)^{r+1}$. Consider the module of splines $R_{G_{\Delta},\alpha^{r+1}_{\Delta}}$ with coefficients in the polynomial ring $R = \mathbb{R}[x_1,\ldots,x_n]$.

Then $F \in S^r_{d}(\Delta)$ if and only if $F^* \in R_{G_{\Delta}, \alpha^{r+1}_{\Delta}}$ is a spline whose localizations $F^*(v)$  have degree at most $d$ for all $v \in V_{\Delta}$.  As $\mathbb{R}$-vector spaces, this dual map is an isomorphism.
\end{proposition}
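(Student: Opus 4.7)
The plan is to split the equivalence into two pieces: a local translation between $\mathcal{C}^r$-smoothness across a single interior $(n-1)$-face and the GKM divisibility condition on the corresponding edge of $G_\Delta$, and a globalization step that uses the strong connectivity hypotheses to recover a $\mathcal{C}^r$ function on all of $|\Delta|$. The dual map $F \mapsto F^*$ is manifestly $\R$-linear, and since each $F|_{\sigma_v}$ is a polynomial of degree at most $d$, the image is a vertex-labeling by polynomials of degree at most $d$. Injectivity is immediate from strong connectivity of $\Delta$ itself: if $F^* = 0$, then $F$ vanishes on every top-dimensional simplex and hence on all of $|\Delta|$.

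The analytic heart of the argument is the classical divisibility lemma, due to Strang (see \cite{Bil88}): if $p,q \in \R[x_1,\ldots,x_n]$ and $\ell$ is a nonzero affine linear form with zero set $H_\ell$, then the function equal to $p$ on one closed half-space of $H_\ell$ and $q$ on the other is of class $\mathcal{C}^r$ across $H_\ell$ if and only if $\ell^{r+1} \mid p-q$. Given an edge $uv$ of $G_\Delta$, the shared face $\sigma_u \cap \sigma_v$ spans an affine hyperplane cut out by a generator $\ell_{uv}$ of $\alpha_\Delta(uv)$. Thus $F$ is $\mathcal{C}^r$ across $\sigma_u \cap \sigma_v$ if and only if $F^*(u) - F^*(v) \in \lang \ell_{uv}^{r+1} \rang = \alpha_\Delta^{r+1}(uv)$, namely if and only if $F^*$ satisfies the GKM condition on the edge $uv$.

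It remains to propagate $\mathcal{C}^r$-regularity across lower-dimensional faces, and this is where the hypothesis on strongly connected links enters. For any interior face $\tau$ of dimension less than $n-1$, strong connectivity of the link of $\tau$ means that any two top-dimensional simplices containing $\tau$ can be joined by a sequence of top-dimensional simplices containing $\tau$, each consecutive pair sharing an $(n-1)$-face that also contains $\tau$. Iterating the divisibility lemma along such a chain shows that the pairwise GKM conditions at interior $(n-1)$-faces already force $F$ to be $\mathcal{C}^r$ in a neighborhood of $\tau$, thereby verifying well-definedness and smoothness globally. Surjectivity then follows by using this to construct $F\co |\Delta| \to \R$ from a given spline $P \in R_{G_\Delta,\alpha_\Delta^{r+1}}$ via $F|_{\sigma_v} := P(v)$; by design, $F^* = P$. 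I expect the principal obstacle to be the classical divisibility lemma itself: once it is taken as input, the remainder is bookkeeping using strong connectivity of $\Delta$ and its links.
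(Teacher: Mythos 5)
The paper does not prove this proposition at all: it is quoted verbatim as Billera's Theorem~2.4 and used as a black box, so there is no in-paper argument to compare against. Your sketch is essentially a reconstruction of the standard (Billera/Strang) proof, and its overall architecture is sound: linearity and injectivity of $F \mapsto F^*$ are immediate; the codimension-one smoothness condition is equivalent to divisibility of $F^*(u)-F^*(v)$ by $\ell_{uv}^{r+1}$; and strong connectivity of links lets you chain these pairwise conditions around any lower-dimensional interior face. Two spots deserve more than the ``bookkeeping'' label. First, the globalization step is not purely combinatorial: divisibility by $\ell_{uv}^{r+1}$ gives you that the two polynomial pieces and all their partials up to order $r$ agree on the hyperplane $H_{\ell_{uv}}$ (hence on any face $\tau \subseteq \sigma_u\cap\sigma_v$), and chaining through the link of $\tau$ shows all pieces in the star of $\tau$ have matching $r$-jets along $\tau$; but concluding that the glued function is genuinely $\mathcal{C}^r$ at points of $\tau$ uses that the pieces are polynomials (the analogous implication fails for general piecewise-smooth functions), and that is the real technical content Billera supplies. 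Second, in the surjectivity step, well-definedness of $F|_{\sigma_v} := P(v)$ on overlaps $\sigma_u\cap\sigma_v$ of codimension $\geq 2$ again requires the chain argument, not just the adjacent-pair GKM conditions; you gesture at this but it should be made explicit. With the divisibility lemma taken as input (as you acknowledge) and these two points expanded, the argument is complete and is the same route Billera takes.
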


The hypotheses in the first sentence of Proposition~\ref{proposition: billera result} are  satisfied by most decompositions that arise in applications.  For instance, suppose the simplicial complex $\Delta$ is a triangulation of a region in the plane. In this case, that the link of a vertex is strongly connected means that there are no ``pinch points" in the region; that is, the region's boundary is a disjoint union of subspaces homotopic to circles.  

Splines are strictly more general than classical splines in various ways: dual graphs to triangulations must have trivalent interior vertices (or other regularity conditions on most vertices, in the case of more general simplices); indeed, dual graphs to planar graphs are planar.  On the algebraic side, the ideals that arise in the image of $\alpha_{\Delta}$  must be principal (unlike most ideals), and the underlying ring is a polynomial ring or quotient thereof (unlike most rings). 

We now specialize Corollary \ref{corollary: splines over quotients} to splines on dual graphs, using Billera's result from Proposition \ref{proposition: billera result} to show the vector space of classical splines $S^r_d(\Delta)$ is the space of splines $(R/I_{d+1})_{G_{\Delta},\pi \circ ({\alpha_{\Delta}})^{r+1}}$ over a quotient polynomial ring.  Note that this endows $S^r_d(\Delta)$ with a product structure.

\begin{corollary} \label{corollary: homogeneity means quotient works}
Assume $\Delta$ satisfies the hypotheses of Proposition~\ref{proposition: billera result} and let $R = \mathbb{R}[x_1,\ldots,x_n]$.  Suppose that every ideal $\alpha(uv)$ is a principal ideal generated by a homogeneous element in $R$, let $I_{d+1}$ be the ideal in $R$ generated by all homogeneous elements of degree $d+1$ and let $\pi\co R \rightarrow R/I_{d+1}$ be the quotient map.  Let $(R/I_{d+1})_{G_{\Delta}, \pi \circ ({\alpha_{\Delta}})^{r+1}}$ denote the ring of splines on the edge-labeled dual graph over the quotient ring $R/I_{d+1}$.

Define $\varphi\co S^r_{d}(\Delta) \rightarrow (R/I_{d+1})_{G_{\Delta}, \pi\circ ({\alpha_{\Delta}})^{r+1}}$ to be the dual map 
\[S^r_d(\Delta) \rightarrow \mathcal{S}_d(G_{\Delta},\alpha_{\Delta}^{r+1})\] 
that sends a classical spline $F$ to the spline $F^*$ composed with the map 
\[\pi_*\co \mathcal{S}_d(G_{\Delta},\alpha_{\Delta}^{r+1}) \rightarrow (R/I_{d+1})_{G_{\Delta}, \pi \circ ({\alpha_{\Delta}})^{r+1}}\] 
defined in Proposition~\ref{proposition: quotient splines and reducing coeffs}, so that $\varphi(F)=\pi_*(F^*)$.

Then $F \in S^r_{d}(\Delta)$ if and only if $\varphi(F) \in (R/I_{d+1})_{G_{\Delta}, \pi\circ ({\alpha_{\Delta}})^{r+1}}$.  Moreover the map $\varphi$ is an $\mathbb{R}$-vector space isomorphism and respects multiplication in the following senses: 
\begin{itemize}
\item 	If $f \in \mathbb{R}[x_1, \ldots, x_n]$ and $F \in S^r_d(\Delta)$ satisfy $f F \in S^r_d(\Delta)$, then 
\[\varphi(f F)= f \varphi(F) = \pi(f)\varphi(F) \in (R/I_{d+1})_{G, \pi \circ (\alpha_{\Delta})^{r+1}}.\] 
\item If $F_1, F_2 \in S^r_d(\Delta)$ satisfy $F_1F_2 \in S^r_d(\Delta)$, then 
 \[\varphi(F_1 F_2) = \varphi(F_1) \varphi(F_2) \in (R/I_{d+1})_{G, \pi \circ ({\alpha_{\Delta}})^{r+1}}.\]
\end{itemize}
\end{corollary}

\begin{proof}
Billera's original result proved that the dual map sending $F \mapsto F^*$ is a well-defined isomorphism of real vector spaces.  Corollary \ref{corollary: splines over quotients} composes with the quotient map and completes the proof.
\end{proof}

The main tool in the final section is the following lemma, which establishes that generators for edge-labeled graphs $(G,\alpha)$ can be assumed to be in a certain kind of general position.  More importantly, the lemma constrains the edge-labeled graphs $(G,\alpha)$ that can arise as the duals to a simplicial complex.  In essence, it says that we may change coordinates to assume that any particular interior vertex in a simplicial complex is the origin of the plane, and then reinterprets that for the edge-labeling of the dual graph.

This lemma reinforces the point that generalized splines are more general than splines dual to simplicial complexes.  It is not generally true that every edge-labeling can be modified by an affine linear operator so that any individual vertex is incident only to ideals generated by polynomials with nonconstant terms!  This is a constraint on the graph inherited from the geometry of graphs embedded in Euclidean space.

\begin{lemma} \label{lemma: assume no edge labeled $y$}
Let $R$ denote the polynomial ring $\Bbbk[x_1,\ldots,x_n]$ where $\Bbbk$ is a field of characteristic zero, especially $\mathbb{R}$ or $\mathbb{C}$.  Suppose that $(G, \alpha)$ is a graph for which each edge-label $\alpha(uv)$ is the principal ideal generated by some power of an affine form, namely
\[(a_{1,uv} x_1+a_{2,uv} x_2+\cdots + a_{n,uv} x_n+c_{uv})^{r_{uv}}\] 
for some positive integer $r_{uv}$ and constants $a_{1,uv}, \ldots, a_{n,uv}, c_{uv} \in \Bbbk$. 

For each vertex $v_0$, there is an edge-labeling $\alpha'$ so that 

\begin{enumerate}
    \item all edge-labels $\alpha'(uv)$ are generated by polynomials with every coefficient $a'_{i,uv}$ nonzero, 
    \item $\alpha'$ is constructed by composing $\alpha$ with a linear operator that acts as a rotation, and
    \item \[R_{G,\alpha} \cong R_{G,\alpha'}.\]
    \end{enumerate}

Furthermore, suppose $\Delta$ is any triangulation in the plane that satisfies the hypotheses of Proposition~\ref{proposition: billera result}, with dual graph $G_{\Delta}$ and dual edge-labeling $\alpha_{\Delta}^{r+1}$.  Fix a bounded face $C$ in $G_{\Delta}$.  Then the edge-labeling $\alpha'$ can also be chosen to satisfy

\begin{enumerate}
    \item[(4)] all edge-labels $\alpha'(uv)$ bounding $C$ are generated by homogeneous polynomials, and
    \item[(5)] $\alpha'$ can be constructed by composing $\alpha_{\Delta}$ with the linear operator that translates the vertex $C \in \Delta$ to the origin and then performs a rotation of Euclidean space around the origin.  
\end{enumerate}
\end{lemma}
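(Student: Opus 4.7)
The plan is to construct $\alpha'$ as the push-forward of $\alpha$ under a carefully chosen affine change of coordinates on $\mathbb{C}[x_1,\ldots,x_n]$. For (1)--(3) this change of coordinates is a rotation; for (4)--(5) it is a translation followed by a rotation. Any such change of coordinates induces a $\mathbb{C}$-algebra automorphism $\varphi$ of $\mathbb{C}[x_1,\ldots,x_n]$, which in turn gives a ring (and $R$-module) isomorphism $R_{G,\alpha}\cong R_{G,\alpha'}$ via $\bp\mapsto(\varphi(\bp_v))_{v\in V}$, since $\varphi$ carries each principal ideal $\alpha(uv)$ bijectively onto $\alpha'(uv)$. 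This immediately handles (3), and the explicit construction yields (2) and (5).

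The core of the argument is a genericity claim: a generic rotation $\rho\in SO(n,\mathbb{C})$ achieves (1). Each generator $L_{uv}=a_{1,uv}x_1+\cdots+a_{n,uv}x_n+c_{uv}$ has nonzero linear part by hypothesis. For each pair $(uv,i)\in E\times\{1,\ldots,n\}$, the condition that the coefficient of $x_i$ in $\rho^{\ast}L_{uv}$ vanishes is cut out by a single nontrivial polynomial equation in the matrix entries of $\rho$, and thus defines a proper Zariski-closed subset of $SO(n,\mathbb{C})$. Since $SO(n,\mathbb{C})$ is irreducible and $E\times\{1,\ldots,n\}$ is finite, the complement of the union of these bad loci is a nonempty Zariski-open set, from which one may pick any $\rho$ to define $\alpha':=\varphi_\rho\circ\alpha$.

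For (4)--(5), I would first precompose with the translation $\tau$ sending the designated point of $\Delta$ to the origin. After translation, every edge-label $\alpha_\Delta^{r+1}(uv_0)$ adjacent to $v_0$ is generated by an affine linear form vanishing at the origin, hence homogeneous, and raising to the $(r+1)$-st power preserves homogeneity. Then compose with a generic rotation $\rho$ as above. Because $\rho$ fixes the origin, it preserves homogeneity, so (4) is retained while (1) is newly achieved across all of $G_\Delta$. The main obstacle is the genericity step, but it is standard once one observes that each bad locus is proper: explicitly, one may exhibit a single rotation aligning the direction $(a_{1,uv},\ldots,a_{n,uv})$ so that no coordinate vector is orthogonal to it, producing a nonzero $i$-th coefficient and witnessing nonemptiness of the good locus.
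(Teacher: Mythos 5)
Your proposal is correct and follows essentially the same route as the paper: an affine change of coordinates induces the algebra isomorphism $R_{G,\alpha}\cong R_{G,\alpha'}$, a rotation avoiding finitely many proper "bad" loci handles (1)--(2), and translating $v_0$ to the origin before rotating yields (4)--(5). The only cosmetic difference is that you phrase the genericity step via Zariski-irreducibility of $SO(n,\mathbb{C})$, while the paper phrases it as the rotation group meeting each bad hyperplane in a codimension-one subspace; these are the same argument.
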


\begin{proof}
Choose a vector $\vec{p} \in \Bbbk^n$ and an invertible $n \times n$ matrix $A \in GL_n(\Bbbk)$ and denote the entries of $A$ by $A_{ij}$.  Define maps $\varphi_A$ and $\varphi_{\vec{p}}$ on the variables $x_1, \ldots, x_n$ by the rules
\[\varphi_A(x_i) = \sum_{j=1}^n A_{ji}x_j \hspace{0.25in} \textup{ and } \hspace{0.25in} \varphi_{\vec{p}}(x_i) = x_i + p_i\]
and extend this to a ring homomorphism on all of $\Bbbk[x_1,\ldots,x_n]$.  The map $\varphi_A$ can be thought of as a change of variables between $x_i$ and $y_i:= \varphi_A(x_i)$ and similarly for $\varphi_{\vec{p}}$.  In particular, we obtain ring isomorphisms
\[\varphi_A\co \Bbbk[x_1,\ldots, x_n] \rightarrow \Bbbk[y_1,\ldots,y_n] \hspace{0.25in} \textup{ and } \hspace{0.25in} \varphi_{\vec{p}}\co \Bbbk[x_1,\ldots, x_n] \rightarrow \Bbbk[y_1,\ldots,y_n] \]
with inverse maps induced from $A^{-1}$ and $-\vec{p}$ respectively.

Now suppose that $(G,\alpha)$ is any edge-labeled graph for which each edge-label $\alpha(uv)$ is generated by some power of an affine form, as per the hypothesis.  Using Proposition~\ref{proposition: surjective mostly functoriality} we obtain maps $(\varphi_A)_*$ and $(\varphi_{\vec{p}})_*$ that are isomorphisms on the corresponding rings  of splines.  Moreover suppose we write $\vec{ax}_{uv} = (a_{1,uv}x_1, a_{2,uv}x_2,\ldots,a_{n,uv}x_n)$ for each edge $uv$, with $a_{i,uv} \in \Bbbk$ and $x_i \in R$. Then the ideal $(\varphi_A \circ \alpha)(uv)$ is generated by 
\[(A \vec{ax}_{uv} + c_{uv})^{r_{uv}}.\]
We now find an invertible matrix $A$ so that for all edges $uv$ the expression $A\vec{ax}_{uv}$ has $n$ nonzero terms.  Indeed, consider the $|E|$ hyperplanes 
\[a_{1,uv} x_1+a_{2,uv} x_2+\cdots + a_{n,uv} x_n = 0\]
obtained over all edges $uv$.  This is a finite set of hyperplanes and $GL_n(\Bbbk)$ consists of an infinite number of matrices. In particular, consider the subgroup of rotations around the origin.  This is a unitary group and so intersects each hyperplane in a subspace of codimension one.  No finite union of these codimension-one subspaces can cover the entire space of possible rotations. Thus there exists a rotation $A$ that satisfies Condition (1) from the claim, as desired.

In addition, suppose $(G_{\Delta},\alpha_{\Delta})$ arises as the dual of a triangulation $\Delta$ satisfying the hypotheses of Proposition~\ref{proposition: billera result}.  Each vertex $\vec{C} \in \Bbbk^2$ of the simplicial complex $\Delta$ satisfies the equations
\[\sum_{i=1}^2 a_{i,uv}x_i + c_{uv} = 0\]
of each line segment through $\vec{C} \in \Delta$.  The translation $\varphi_{-\vec{C}}$ moves $\vec{C}$ to the origin in the plane.  Thus the translation induces a ring homomorphism for which the equations $\alpha'(uv)$ have no constant term whenever $uv$ is an edge bounding the face corresponding to $\vec{C}$ in the dual graph $G_{\Delta}$.  Thus $\alpha'(uv)$ is homogeneous for all edges $uv$ bounding the face corresponding to $\vec{C}$.  

Composing these maps as necessary proves the claim.
\end{proof}

\subsection{Applications to splines on planar triangulations}\label{sec:apps}
In this final subsection, we apply earlier work in this paper to the lower bound formula described in the introduction.  

Throughout this subsection, we denote by $\Delta$ a triangulation of a region in the plane $\mathbb{R}^2$ satisfying the hypotheses of Proposition~\ref{proposition: billera result}.  For all undefined terms and for a detailed history of the lower bound formula and lower bound conjecture, we point to \cite[Section 9]{LaiSch07}. We now explicitly state what's often called Schumaker's lower bound formula.  

\begin{theorem}[Schumaker's lower bound formula {\cite[Theorem 9.9]{LaiSch07}}]\label{thm:lowerbound}
Let $\mathcal{V}_\mathrm{int}$ be the set of interior vertices of $\Delta$.  For each $v \in \mathcal{V}_\mathrm{int}$, denote by $m_v$ the number of distinct slopes among all edges incident to $v$.  Write $V_\mathrm{int} := |\mathcal{V}_\mathrm{int}|$, and similarly denote by $E_\mathrm{int}$ the number of interior edges of $\Delta$.  Then for all $0 \le r \le d$, we have
\[
D + \sigma \le \dim S_d^r(\Delta),
\]
where
\[
D := \binom{d+2}{2} + \binom{d-r+1}{2}E_{\mathrm{int}} - \left[\binom{d+2}{2} - \binom{r+2}{2}\right]V_{\mathrm{int}},
\]
and
\[
\sigma := \sum_{v \in \mathcal{V}_\mathrm{int}} \sigma_v,
\]
where 
\[
\sigma_v := \sum_{j=1}^{d-r} (r+j+1 - jm_v)_+.
\]
Here, the subscript $+$ denotes the function $(-)_+\co \mathbb{R} \rightarrow \mathbb{R}_{\ge 0}$ defined by $x \mapsto x$ if $x >0$ and $x \mapsto 0$ otherwise.
\end{theorem}

This lower bound for the dimension of the space of classical splines $S^r_d(\Delta)$ holds for all $r$, $d$, and $\Delta$ and one active area of research looks for tighter upper- and lower-bounds for $S^r_d(\Delta)$ in general.  We focus instead on another active open question: the case when $\Delta$ is a planar triangulation and the smoothness $r$ is one.  Indeed, our focus on quadratic edge-labels and low-degree splines in parts of this paper is precisely because of the lower bound conjecture. 

\begin{conjecture}
The inequality in the lower bound formula of Theorem~\ref{thm:lowerbound} is an  equality in the case that $r = 1$ and $d \geq 3$; that is, in the notation of Theorem~\ref{thm:lowerbound},
\[
D + \sigma = \dim S^1_d(\Delta).
\]
\end{conjecture}

Some authors call this Schumaker's conjecture, though he credits Strang.  Alfeld--Schumaker and Hong proved it when $d \geq 4$ \cite{AlfSch90,Hon91} and Billera proved it for \textit{generic} triangulations when $d=3$ \cite{Bil88}.  (The case when $d=2$ is so mysterious that it has no conjectural formula.)

For most of the remainder of the paper, we will be focused on the special case whenever $\Delta$ is an interior cell (or {\em pinwheel} triangulation), which is a triangulation that has a unique interior vertex around which triangles radiate like the spokes of a wheel. This is shown in Figure \ref{figure: pinwheel and dual cycle} together with its dual graph, which is a cycle. 
\begin{figure}
    \centering
    \begin{picture}(200,100)(-100,-50)
    \put(-30,42){\circle*{5}}
    \put(30,42){\circle*{5}}
    \put(-30,-42){\circle*{5}}
    \put(30,-42){\circle*{5}}
    \put(-60,0){\circle*{5}}
    \put(60,0){\circle*{5}}
    \put(0,0){\circle*{5}}
    
    \put(-31,-42){\line(3,4){61}}
    \put(-31,42){\line(3,-4){61}}
    \put(-60,0){\line(1,0){120}}
    
    \multiput(-60,0)(90,42){2}{\line(3,-4){30}}
    \multiput(-60,0)(90,-42){2}{\line(3,4){30}}
    \multiput(-30,42)(0,-84){2}{\line(1,0){60}}
    
    \put(0,25){\color{red} \circle*{5}}
    \put(0,-25){\color{red} \circle*{5}}
    \put(-25,12){\color{red} \circle*{5}}
    \put(25,12){\color{red} \circle*{5}}
    \put(-25,-12){\color{red} \circle*{5}}
    \put(25,-12){\color{red} \circle*{5}}

    \multiput(-25,-12)(25,37){2}{\color{red} \line(2,-1){25}}
    \multiput(-25,12)(25,-37){2}{\color{red} \line(2,1){25}}
    \multiput(-25,-12)(50,0){2}{\color{red} \line(0,1){24}}

    \end{picture}
    \caption{A pinwheel with its dual graph shown in red.}
    \label{figure: pinwheel and dual cycle}
\end{figure}
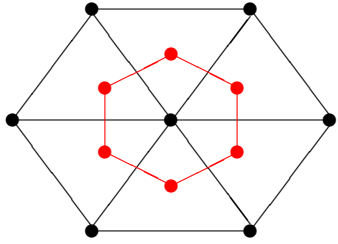

Note that in the case of interior cells $\Delta$, the lower bound formula of Theorem~\ref{thm:lowerbound} reduces to the formula below, and it actually computes the dimension of the space $S^r_d(\Delta)$.

\begin{theorem}[{\cite[Theorem 9.3]{LaiSch07}}]\label{thm:laisch}
Let $\Delta$ be an interior cell (or pinwheel triangulation) with single interior vertex $v$.  For any $0 \le r \le d$, we have
\[
\binom{r+2}{2} +  \binom{d-r+1}{2}E_{\mathrm{int}} + \sigma_v = \dim S^r_d(\Delta).
\]
\end{theorem}

\begin{example}\label{ex:interiorcelldim}
We illustrate Theorem~\ref{thm:laisch} for the two interior cells $\Delta_{\mathrm{left}}$ and $\Delta_{\mathrm{right}}$ of Figure~\ref{figure: two interior cells} in the case $r=1$ and $d=3$.

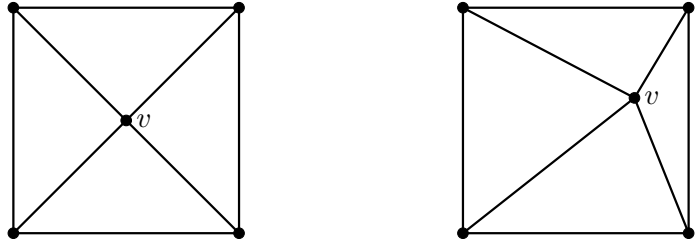
\begin{figure}
    \centering
    \begin{tikzpicture}
       \draw[line width = .85pt] (-1.5,1.5) -- (1.5,-1.5);
\draw[line width=.85pt] (-1.5,-1.5) -- (1.5,1.5);
\draw[line width=.85pt] (-1.5,1.5) -- (-1.5,-1.5);
\draw[line width = .85pt] (-1.5,-1.5) -- (1.5,-1.5);
\draw[line width = .85pt] (1.5,-1.5) -- (1.5,1.5);
\draw[line width = .85pt] (-1.5,1.5) -- (1.5,1.5);
\filldraw[black] (0,0) circle (2pt) node[anchor=west]{$v$};
\filldraw[black] (-1.5,1.5) circle (2pt);
\filldraw[black] (-1.5,-1.5) circle (2pt);
\filldraw[black] (1.5,1.5) circle (2pt);
\filldraw[black] (1.5,-1.5) circle (2pt);
    \end{tikzpicture}
    \qquad \qquad \qquad \quad
        \begin{tikzpicture}
       \draw[line width = .85pt] (-1.5,1.5) -- (1.5,1.5);
\draw[line width=.85pt] (-1.5,1.5) -- (-1.5,-1.5);
\draw[line width=.85pt] (-1.5,-1.5) -- (1.5,-1.5);
\draw[line width = .85pt] (1.5,-1.5) -- (1.5,1.5);

\draw[line width = .85pt] (-1.5,-1.5) -- (.78,.30);
\draw[line width = .85pt] (-1.5,1.5) -- (.78,.30);
\draw[line width = .85pt] (1.5,1.5) -- (.78,.30);
\draw[line width = .85pt] (1.5,-1.5) -- (.78,.30);

\filldraw[black] (.78,.30) circle (2pt) node[anchor=west]{$v$};

\filldraw[black] (-1.5,1.5) circle (2pt);
\filldraw[black] (-1.5,-1.5) circle (2pt);
\filldraw[black] (1.5,1.5) circle (2pt);
\filldraw[black] (1.5,-1.5) circle (2pt);
    \end{tikzpicture}
    \caption{On the left, an interior cell consisting of a singular vertex.  On the right, an interior cell consisting of a vertex that is not singular. Both dual graphs are $4$-cycles but the example on the left has fewer distinct edge-labels.}
    \label{figure: two interior cells}
\end{figure}

Note that both $\Delta_{\mathrm{left}}$ and $\Delta_{\mathrm{right}}$ have $E_\mathrm{int} = 4$.  However, for $\Delta_{\mathrm{left}}$ we have $m_v = 2$, and for $\Delta_{\mathrm{right}}$ we have $m_v = 4$.  Substituting these values into the formula in Theorem~\ref{thm:laisch} gives
\[
\dim S^1_3(\Delta_{\mathrm{left}}) = 16 \qquad \text{and} \qquad \dim S^1_3(\Delta_{\mathrm{right}}) = 15.
\]
\end{example}

One of our main results, Corollary~\ref{corollary: dimension of splines on pinwheels} below, uses our results about minimal generating sets from earlier sections of this paper to recover Theorem~\ref{thm:laisch} of Lai--Schumaker in the case $r=1$.  We first establish some preliminary results.

\begin{theorem}\label{theorem: pinwheel characterization}
Let $\Delta$ be an interior cell, i.e. a pinwheel triangulation, with $n$ triangles. Let $I_{d+1} \subseteq \mathbb{R}[x,y]$ be the ideal generated by all monomials of degree $d+1$, and let $\pi\co R \rightarrow R/I_{d+1}$ be the quotient map.  Then there is an isomorphism of $\mathbb{R}$-vector spaces  $S^r_d(\Delta) \cong (R/I_{d+1})_{C_n,\pi\circ \alpha}$, 
where $C_n$ is an $n$-cycle and $\alpha$ is an edge-labeling so that every ideal $\alpha(uv)$ is principal and generated by $(x+a_{uv}y)^{r+1}$ for a nonzero $a_{uv} \in \mathbb{R}$.  Moreover, for each $a_{uv}$ there is at most one other edge $u'v'$ with $a_{u'v'}=a_{uv}$ and that edge cannot immediately follow or precede $uv$.
\end{theorem}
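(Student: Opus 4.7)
The plan is to combine Billera's duality (Proposition~\ref{proposition: billera result}), the quotient-ring identification (Corollary~\ref{corollary: homogeneity means quotient works}), and the change-of-coordinates result of Lemma~\ref{lemma: assume no edge labeled $y$} to reduce the claim to a geometric statement about the spokes of the pinwheel. The analytic content of the theorem then collapses to an application of existing machinery, and the combinatorial constraints on the edge-labels fall out from elementary planar geometry.

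First I would let $v_0$ denote the unique interior vertex of $\Delta$ and invoke Lemma~\ref{lemma: assume no edge labeled $y$} at $v_0$. The key observation is that in a pinwheel every triangle has $v_0$ as a vertex, so every interior edge of $\Delta$ (every ``spoke'') is incident to $v_0$. Since the dual graph $G_\Delta = C_n$ records precisely the interior edges of $\Delta$, every one of its edges is of this form, and parts (1) and (4) of Lemma~\ref{lemma: assume no edge labeled $y$} produce an isomorphic edge-labeling $\alpha'$ in which each label is a principal ideal generated by a homogeneous degree-$(r+1)$ polynomial $(Ax + By)^{r+1}$ with both $A,B$ nonzero. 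Rescaling puts each label in the form $(x + a_{uv} y)^{r+1}$ with $a_{uv} \ne 0$, and Corollary~\ref{corollary: homogeneity means quotient works} then identifies $S^r_d(\Delta)$ with $R_{C_n, \overline{\alpha}}$ as $\mathbb{C}$-vector spaces via $F \mapsto \overline{F^*}$.

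For the constraints on repeated labels I would interpret the labels geometrically: the label of the dual edge $uv$ records (up to scalar) the line through the origin on which the spoke shared by $\sigma_u$ and $\sigma_v$ lies, and two labels $a_{uv}$ and $a_{u'v'}$ coincide iff the corresponding spokes are collinear. Each line through the origin contains at most two distinct spokes of the pinwheel (one per direction), so each value $a_{uv}$ is shared by at most one other dual edge. Finally, two consecutive dual edges $uv$ and $vw$ correspond to the two sides of the single triangle $\sigma_v$ that meet at $v_0$; these are not collinear in a non-degenerate triangle, so $a_{uv} \ne a_{vw}$, ruling out the ``immediately preceding or following'' case.

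The argument has no single hard step; the main obstacle is essentially bookkeeping, namely checking that the pinwheel geometry makes every dual edge incident to the chosen vertex $v_0$ so that Lemma~\ref{lemma: assume no edge labeled $y$}(4) applies uniformly, and then phrasing the ``collinear spokes'' dictionary so that the two combinatorial constraints read off cleanly.
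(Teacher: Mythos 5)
Your proposal is correct and follows essentially the same route as the paper's proof: Billera duality, the change-of-coordinates lemma applied at the central interior vertex to make all labels homogeneous of the form $(x+a_{uv}y)^{r+1}$ with $a_{uv}\neq 0$, and the planar geometry of spokes through a point to get the constraints on repeated labels. If anything, your explicit appeal to Corollary~\ref{corollary: homogeneity means quotient works} for the passage to the quotient ring $R_{C_n,\overline{\alpha}}$ is slightly more careful than the paper's proof, which leaves that step implicit in its citation of Billera's result.
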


\begin{proof}
Consider the map $\varphi$ of Corollary~\ref{corollary: homogeneity means quotient works}.  Corollary~\ref{corollary: homogeneity means quotient works} proves that $\varphi$ is an isomorphism of $\mathbb{R}$-vector spaces, and notes that the dual graph to a pinwheel triangulation with $n$ triangles is a cycle on $n$ vertices, with edge-labeling $\alpha$ is given by the $(r+1)^{\text{th}}$ power of the equations of the lines through the central vertex in $\Delta$.

By Lemma~\ref{lemma: assume no edge labeled $y$}, we may translate the central vertex of the triangulation to the origin and assume each edge $uv$ is labeled by $(x+a_{uv}y)^{r+1}$ for nonzero coefficients $a_{uv}$.  Finally, at most two rays through a given point lie on the same line, so no more than two of the edge-labels can coincide; if two successive rays going clockwise around the central vertex are the same, then they describe whose interior angle-sum is more than $180^{\circ}$, which is impossible.  This proves the claim.
\end{proof}

We use the previous theorem to reinterpret the main results of earlier sections.  The key observation is the following, which characterizes cycles that can be realized as the dual of a triangulation.

\begin{lemma}\label{lem:dualcycles}
All edge-labeled cycles $(C,\alpha)$ that are geometrically realizable as the dual of a triangulation must have at least three edge-labels unless the cycle is a four-cycle with two distinct edge-labels that alternate around the cycle.
\end{lemma}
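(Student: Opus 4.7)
The plan is to invoke Theorem~\ref{theorem: pinwheel characterization} to reduce the lemma to a short combinatorial case analysis on the cyclic sequence of edge-labels. That theorem tells us that any edge-labeled cycle $(C_n,\alpha)$ realizable as the dual of a pinwheel triangulation has $n \geq 3$, that each edge is labeled by a principal ideal $\lang (x + a_{uv} y)^{r+1} \rang$ with $a_{uv} \in \mathbb{C}^*$, and that the collection of values $\{a_{uv}\}$ satisfies two constraints: each value appears in at most two edges, and two edges with a common value cannot be adjacent in $C_n$. I would take these two constraints as the only input to the rest of the argument.

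First I would dispose of the one-label case: if every edge shares a single $a$-value, then that value occurs $n$ times, which forces $n \leq 2$ and contradicts $n \geq 3$. For the two-label case, the ``at most twice'' condition immediately yields $n \leq 4$. If $n = 3$, some label must appear twice by pigeonhole, but every pair of edges in a triangle shares a vertex, violating the non-adjacency requirement. If $n = 4$, then each label occurs exactly twice, and the only way for two equally-labeled edges of a 4-cycle to be non-adjacent is for them to be opposite. Applying this to both labels forces the labels to alternate around the cycle, which is precisely the configuration described in the statement.

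There is no genuine technical obstacle once Theorem~\ref{theorem: pinwheel characterization} is available; the bookkeeping above is a finite case check. The one thing I would want to verify before declaring victory is that the phrase ``dual of a triangulation'' in the lemma truly matches the pinwheel hypothesis of Theorem~\ref{theorem: pinwheel characterization} — this should be safe because a planar triangulation whose dual graph is a cycle is a pinwheel, and even apart from that reduction, the same two constraints can be read off from the same geometric reasoning used in the proof of that theorem (each ideal comes from a line in the plane through a fixed point, at most two rays from that point lie on any one line, and two consecutive edges around a central vertex cannot be collinear lest the enclosed triangle degenerate).
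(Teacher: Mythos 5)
Your proposal is correct and takes essentially the same approach as the paper: both reduce the lemma to the two constraints from Theorem~\ref{theorem: pinwheel characterization} (each $a$-value on at most two edges, and never on adjacent edges) and finish with a short case analysis on the number of labels. Your bookkeeping is in fact slightly more explicit than the paper's, but the underlying argument is identical.
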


\begin{proof}
A cycle is dual to a triangulation only if that triangulation is an interior cell (namely pinwheel triangulation).  Suppose $(C,\alpha)$ is dual to a triangulation.  Theorem \ref{theorem: pinwheel characterization} implies that if $C$ has three edges, then they are all labeled distinctly; if $C$ has at least five edges, then at least three successive edges must be labeled distinctly.  The only four-cycles with fewer than three distinct edge-labels are precisely those dual to pinwheel triangulations formed by the intersection of two lines.  This gives a four-cycle whose edge-labels alternate between two distinct edge-labels as one moves clockwise.
\end{proof}

Classically, interior vertices formed by the intersection of two lines play a special role in the theory of classical splines on triangulations.  We give this terminology in the context of splines on the dual graph.

\begin{definition}
The interior vertices of the triangulations corresponding to $4$-cycles with two distinct edge-labels are called {\em singular vertices}.  
\end{definition}

We note that the interior vertex $v$ on the left side in Figure~\ref{figure: two interior cells} is a singular vertex, but the vertex $v$ on the right side is not singular.  Remark~\ref{rem:sing} discussed this in the context of the corresponding dual edge-labeled graph.

Lemma \ref{lem:dualcycles} thus shows that singular vertices are special insofar as they correspond to the {\em only geometrically realizable} edge-labeled cycles with two distinct edge labels. 

Combining these results with those from earlier sections gives an explicit algorithm for constructing a minimal generating set for splines on interior cells. The first consequence is a classical result for general $r$ and $d$ \cite[Theorems 9.3 and 9.12]{LaiSch07}.

\begin{corollary} \label{corollary: dimension of splines on pinwheels}
Denote the number of monomials of degree at most $d$ by $m_d$, namely 
\[m_d = 1+\cdots+(d+1) = \frac{(d+1)(d+2)}{2}.\] 
The dimension of the space $S^1_d(\Delta)$ of classical splines on a pinwheel triangulation $\Delta$ (i.e. an interior cell) with $n$ triangles has two formulas.  

If the pinwheel has four triangles and a singular vertex, then the dimension of $S^1_d(\Delta)$ is
\[\begin{cases} m_d & \textup{  if } d \leq 1, \\ m_d + 2m_{d-2} &\textup{  if } 2 \leq d \leq 3, \\ m_d + 2m_{d-2} + m_{d-4} &\textup{   if } d \geq 4. \end{cases}\]
If the pinwheel has $n \geq 3$ triangles and no singular vertex, then the dimension of $S^1_d(\Delta)$ is
\[\begin{cases} m_d & \textup{  if } d \leq 1, \\ m_d + (n-3)m_{d-2} &\textup{  if } d=2, \\ m_d + (n-3)m_{d-2} + 2m_{d-3} &\textup{   if } d \geq 3. \end{cases}\]
\end{corollary}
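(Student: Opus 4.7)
The plan is to translate the classical question into a dimension count for generalized splines on the dual cycle, then substitute the degree sequences already computed in Corollary~\ref{cor:polysplines}.

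First, by Theorem~\ref{theorem: pinwheel characterization}, the pinwheel $\Delta$ with $n$ triangles has dual graph an $n$-cycle $C_n$ with edge-labeling $\alpha$ by principal ideals of the form $\lang(x+a_{uv}y)^{2}\rang$ (the $r=1$ case). Corollary~\ref{corollary: homogeneity means quotient works} together with Proposition~\ref{proposition: splines over quotients} yield $\mathbb{C}$-vector-space isomorphisms
\[
S^1_d(\Delta) \;\cong\; R_{C_n,\overline{\alpha}} \;\cong\; \mathcal{S}_d,
\]
where $\mathcal{S}_d\subseteq R_{C_n,\alpha}$ is the subspace of splines whose entries are polynomials of degree at most $d$. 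Thus it suffices to compute $\dim_{\mathbb{C}}\mathcal{S}_d$.

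Corollary~\ref{cor:polysplines} supplies a homogeneous free basis $\cB=\{\bb^1,\ldots,\bb^n\}$ of $R_{C_n,\alpha}$, with each $\bb^i$ homogeneous of some degree $e_i$. Writing an arbitrary spline as $p=\sum_i q_i\bb^i$ and decomposing each coefficient into homogeneous pieces $q_i=\sum_k q_i^{(k)}$, the degree-$m$ component of $p$ equals $\sum_i q_i^{(m-e_i)}\bb^i$; by freeness this component vanishes if and only if each $q_i^{(m-e_i)}$ is zero. Hence $p\in\mathcal{S}_d$ exactly when each $q_i$ has degree at most $d-e_i$, so
\[
\dim_{\mathbb{C}}\mathcal{S}_d \;=\; \sum_{i=1}^n m_{d-e_i} \;=\; \sum_{j\ge 0} d_j\, m_{d-j},
\]
with the convention $m_k=0$ for $k<0$ and $d_j$ the degree-sequence entry.

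Now Lemma~\ref{lem:dualcycles} classifies the edge-label configurations of dual cycles of pinwheels: either at least three distinct edge-labels, or a four-cycle with two alternating labels (the singular-vertex case). For the singular case, Corollary~\ref{cor:polysplines}(\ref{item:2}) gives degree sequence $(1,0,2,0,1)$ and so $\dim S^1_d(\Delta)=m_d+2m_{d-2}+m_{d-4}$; splitting by the signs of $d-2$ and $d-4$ recovers the subcases $d\le 1$, $2\le d\le 3$, and $d\ge 4$ in the statement. For the remaining pinwheels, Corollary~\ref{cor:polysplines}(\ref{item:3}) gives degree sequence $(1,0,n-3,2)$ and hence $\dim S^1_d(\Delta)=m_d+(n-3)m_{d-2}+2m_{d-3}$; splitting by the signs of $d-2$ and $d-3$ recovers the subcases $d\le 1$, $d=2$, and $d\ge 3$.

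The main content is in the third paragraph: the combination of freeness and homogeneity is what lets bounded-degree truncation distribute across the basis, so that $\dim\mathcal{S}_d$ factors cleanly through the degree sequence. Everything else reduces to bookkeeping, splitting $m_k$ into zero versus nonzero cases according to the sign of $k$.
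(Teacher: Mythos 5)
Your proposal is correct and follows essentially the same route as the paper's proof: identify $S^1_d(\Delta)$ with the degree-at-most-$d$ splines on the dual cycle, invoke Lemma~\ref{lem:dualcycles} to reduce to the two degree sequences of Corollary~\ref{cor:polysplines}, and count $\sum_j d_j m_{d-j}$. Your third paragraph merely makes explicit the justification (homogeneity plus freeness of the basis) that the paper states in one line when it says each module generator of degree $j$ contributes $m_{d-j}$ vector-space basis elements.
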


\begin{proof}
Lemma \ref{lem:dualcycles} asserts that the pinwheel with a  singular vertex is the only geometrically-realizable cycle with just two distinct edge-labels.  Theorem \ref{thm:2labalg} built an upper-triangular basis for the module of splines over the polynomial ring in the two-label case. Each module generator of degree $j$ contributes $m_{d-j}$ elements to the vector space basis in degree at most $d$, since each module generator can be multiplied by each of the $m_{d-j}$ monomials of degree at most $d-j$.

If $\Delta$ is not the pinwheel with a singular vertex, Theorem \ref{theorem: pinwheel characterization} showed that $\Delta$ must have at least three successive distinct labels.  Lemma \ref{lem:reduced} gave a homogeneous basis for the spline space as a module over the polynomial ring in this case.  Thus each module generator of degree $j$ contributes $m_{d-j}$ elements to the vector space basis in degree at most $d$.  This proves the claim.
\end{proof}

\begin{example}
Consider the interior cells $\Delta_{\mathrm{left}}$ and $\Delta_{\mathrm{right}}$ from Figure~\ref{figure: two interior cells}.  Corollary~\ref{corollary: dimension of splines on pinwheels} computes $\dim S^1_3(\Delta_{\mathrm{left}})$ and $\dim S^1_3(\Delta_{\mathrm{right}})$ in a different way than in Example~\ref{ex:interiorcelldim}.

Indeed, note that $m_3 = 10$, $m_1 = 3$, and $m_0 = 1$.  Substituting these values into the formulas in Corollary~\ref{corollary: dimension of splines on pinwheels}, we again obtain
\[
\dim S^1_3(\Delta_{\mathrm{left}}) = 16 \qquad \text{and} \qquad \dim S^1_3(\Delta_{\mathrm{right}}) = 15.
\]
\end{example}

These results also allow us to contextualize the lower bound conjecture.  In particular, we can bound the dimension of $S^1_d(\Delta)$ by building the triangulation $\Delta$ one interior vertex at a time, and by using Corollary \ref{corollary: dimension of splines on pinwheels} to bound the contribution of each interior vertex.

\begin{corollary}\label{cor:final}
Suppose $\Delta$ and $\Delta'$ are triangulations of a region in the plane satisfying the hypotheses of Proposition~\ref{proposition: billera result} and that $\Delta'$ is obtained by adding a new interior cell to $\Delta$ with $k$ triangles radiating around the new interior vertex. 

Then the complex vector space $S^1_d(\Delta')$ may have more basis elements than $S^1_d(\Delta)$.  The number of additional (vector space) basis elements is at most
\[\dim \left( S^1_d(\Delta_0) \right) - m_d,\]
where $\Delta_0$ is the pinwheel triangulation with $k$ triangles and $m_d$ is the number of monomials of degree at most $d$.
\end{corollary}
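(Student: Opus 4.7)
The plan is to bound $\dim S^1_d(\Delta')$ above by $\dim S^1_d(\Delta) + \dim S^1_d(\Delta_0) - m_d$ via a restriction-to-pinwheel map and a rank-nullity count. As a first step, I would invoke Lemma~\ref{lemma: assume no edge labeled $y$} to translate coordinates so that the new interior vertex is at the origin (making all edge-labels incident to the pinwheel homogeneous), and then apply Corollary~\ref{corollary: homogeneity means quotient works} to identify the three classical spline spaces $S^1_d(\Delta)$, $S^1_d(\Delta')$, and $S^1_d(\Delta_0)$ with modules of generalized splines over the quotient ring $\mathbb{C}[x,y]/I_{d+1}$ on their respective dual graphs.

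Let $\Omega$ denote the union of the $k$ new pinwheel triangles in $\Delta'$ around the new interior vertex. Since $\Delta$ and $\Delta'$ triangulate the same ambient region and differ only in the subdivision inside $\Omega$, the region $\Omega$ is also a union of $\Delta$-triangles (triangulated without the new interior vertex). I would then define the restriction map
\[
\rho \co S^1_d(\Delta') \longrightarrow S^1_d(\Delta_0), \qquad F \longmapsto F|_{\Omega},
\]
which is well-defined because the GKM conditions along the pinwheel's interior edges are inherited from the spline conditions on $\Delta'$. Let $\mathcal{P}_d \subseteq S^1_d(\Delta_0)$ be the $m_d$-dimensional subspace of splines that are a single polynomial of degree at most $d$ on all of $\Omega$; by Corollary~\ref{cor:polysplines}, $\mathcal{P}_d$ is the span of the monomial multiples of the trivial spline $\bid$ in the homogeneous basis of $S^1_d(\Delta_0)$.

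The heart of the argument is the claim $\rho^{-1}(\mathcal{P}_d) \subseteq S^1_d(\Delta)$. If $F \in S^1_d(\Delta')$ satisfies $\rho(F) \in \mathcal{P}_d$, then $F$ agrees with a single polynomial of degree $\leq d$ on $\Omega$, so $F$ is automatically $C^\infty$ across every $\Delta$-edge interior to $\Omega$. Combined with $F$'s $\Delta'$-spline behavior outside $\Omega$ (where $\Delta$ and $\Delta'$ share triangles) and on the shared boundary $\partial \Omega$, this shows $F \in S^1_d(\Delta)$. With the claim established, the restriction $\rho$ descends to an injection
\[
\overline{\rho} \co S^1_d(\Delta')/\rho^{-1}(\mathcal{P}_d) \hookrightarrow S^1_d(\Delta_0)/\mathcal{P}_d,
\]
and rank-nullity gives
\[
\dim S^1_d(\Delta') \;\leq\; \dim \rho^{-1}(\mathcal{P}_d) + \dim S^1_d(\Delta_0) - m_d \;\leq\; \dim S^1_d(\Delta) + \dim S^1_d(\Delta_0) - m_d.
\]

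The main obstacle is verifying the containment $\rho^{-1}(\mathcal{P}_d) \subseteq S^1_d(\Delta)$ with care: one must carefully track how $\Omega$ is triangulated in $\Delta$ versus $\Delta'$ and confirm that a function which is a single polynomial on $\Omega$ indeed satisfies the $C^1$ conditions across every $\Delta$-edge, including those on the pieces of $\partial\Omega$ shared by the two triangulations. Once this is in hand, the rest of the argument is a straightforward dimension count.
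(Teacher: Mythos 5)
Your argument is correct under your reading of the construction, and it is essentially the paper's one\nobreakdash-line argument viewed from the opposite end of the same decomposition. The paper works with the forgetful map $R_{G_{\Delta'},\alpha_{\Delta'}}\to R_{G_{\Delta},\alpha_{\Delta}}$, bounds its image by $\dim S^1_d(\Delta)$, and bounds its kernel by the dimension of the ``nonconstant'' pinwheel splines, i.e.\ by $\dim S^1_d(\Delta_0)-m_d$; you instead restrict to the pinwheel via $\rho$, bound the quotient $S^1_d(\Delta')/\rho^{-1}(\mathcal{P}_d)$ by $\dim S^1_d(\Delta_0)-m_d$, and bound $\rho^{-1}(\mathcal{P}_d)$ by $\dim S^1_d(\Delta)$. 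Your version has the advantage that the key containment $\rho^{-1}(\mathcal{P}_d)\subseteq S^1_d(\Delta)$ can be verified honestly (a $C^1$ function that is a single polynomial of degree at most $d$ on $\Omega$ and coincides with a $\Delta'$-spline outside $\Omega$ is literally an element of $S^1_d(\Delta)$), whereas the paper's identification of the kernel with the nonconstant pinwheel splines is left implicit. A small simplification: you do not need Corollary~\ref{cor:polysplines} to see $\dim\mathcal{P}_d=m_d$, since the splines that are a single polynomial on $\Omega$ are exactly $\mathbb{C}[x,y]_{\le d}\cdot\bid$.

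The one point you must pin down is the geometric meaning of ``adding a new interior cell,'' which you correctly flag as the delicate step but resolve by fiat. You assume $\Delta$ and $\Delta'$ triangulate the same region, with $\Omega$ a union of $\Delta$-triangles that $\Delta'$ re-triangulates as a pinwheel. The paper's closing paragraph instead describes the inverse operation as deleting an interior vertex together with \emph{all triangles containing it}, which makes $\Delta$ a triangulation of a strictly smaller region, disjoint from $\Omega$. Under that second reading your containment $\rho^{-1}(\mathcal{P}_d)\subseteq S^1_d(\Delta)$ does not typecheck: it must be replaced by the restriction-to-$\Delta$ map, which is not injective on $\rho^{-1}(\mathcal{P}_d)$ (a $\Delta'$-spline vanishing on $\Delta$ can equal a nonzero multiple of $\ell^2$ on all of $\Omega$, where $\ell$ is the line of a shared boundary edge), and the dimension count genuinely changes --- indeed, for a single triangle with a $k$-pinwheel glued along one edge the stated inequality fails for $d\ge 2$. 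Since your reading is the one that matches ``triangulations of a region'' (singular) and makes the inequality true, your proof stands, but you should state that hypothesis explicitly as part of the argument rather than leaving it as an inference.
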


\begin{proof}
The preimage of the restriction map $R_{G_{\Delta'},\alpha_{\Delta'}} \rightarrow R_{G_{\Delta},\alpha_{\Delta}}$ consists of the {\em nonconstant} splines in $R_{G_{\Delta_0},\alpha}$.  The dimension of nonconstant splines is an upper bound on the total dimension of $R_{G_{\Delta'},\alpha_{\Delta'}}$ since the restriction might not be surjective.  This dimension was given in Corollary \ref{corollary: dimension of splines on pinwheels}, proving the claim.
\end{proof}

The condition that the link of a vertex is strongly connected in fact implies that any triangulation satisfying the constraints of Proposition \ref{proposition: billera result} can be built one interior vertex at a time.   We sketch the argument here.  Since the link of each vertex is strongly connected, the link of each interior vertex is a cycle.  If $\Delta'$ has an interior vertex, then there is an interior vertex lying on a triangle with a boundary edge. Removing this vertex and the triangles on which it lies leaves a triangulation $\Delta$.  If $\Delta$ is connected, then it still satisfies the conditions of Proposition \ref{proposition: billera result}.  For some choice of vertex $\Delta$ is connected, because $\Delta'$ is strongly connected.  


\bibliography{splines}
\bibliographystyle{amsalpha}
\end{document}